\documentclass[12pt,reqno,fleqn]{amsart}  
\usepackage{tikz}
\usepackage{amsmath,amstext,amsthm,amssymb,amsxtra}
\usepackage[top=1.5in, bottom=1.5in, left=1.25in, right=1.25in]	{geometry}
\usepackage[T1]{fontenc}
\usepackage{lmodern}

\usepackage{graphics}
\graphicspath{{./images/}}
\usepackage{mathrsfs}

\usepackage{mathtools}
\mathtoolsset{showonlyrefs,showmanualtags}

\usepackage{hyperref} 
\hypersetup{
	colorlinks=true,       
	linkcolor=blue,          
	citecolor=magenta,        
	filecolor=magenta,      
	urlcolor=cyan           
}

\usepackage[msc-links]{amsrefs}

\newtheorem{theorem}{Theorem}[section]

\newtheorem{lemma}{Lemma}[section]
\newtheorem{corollary}{Corollary}[section]
\newtheorem{OldTheorem}{Theorem}
\theoremstyle{definition}
\newtheorem{definition}{Definition}[section]
\theoremstyle{definition}

\theoremstyle{remark}
\newtheorem{remark}{Remark}[section]

\def\dmax{{\rm dmax}}
\def\dmin{{\rm dmin}}
\def\card{{\rm card}}

\def\ZF{\ensuremath{\mathcal F}}

\def\supp{{\rm supp\,}}

\def\ZU{\ensuremath{\mathfrak U}}
\def\ZV{\ensuremath{\mathfrak V}}

\def\MM^d{\ensuremath{\mathfrak M}}
\def\MM{\ensuremath{\mathcal M}}

\def\ZZ{\ensuremath{\mathbb Z}}
\def\ZI{\ensuremath{\mathbf 1}}

\def\ZD{\ensuremath{\mathcal D}}

\def\ZR{\ensuremath{\mathbb R}}
\def\zR{\ensuremath{\mathcal R}}

\def\pr{\ensuremath{\mathrm {pr}}}

\def\ZD{\ensuremath{\mathcal D}}

\def\ZG{{\mathcal G\,}}
\def\ZC{{\mathcal {C}}}

\numberwithin{equation}{section}
\def\md#1#2\emd{\ifx0#1
	\begin{equation*} #2 \end{equation*}\fi  
	\ifx1#1\begin{equation}#2\end{equation}\fi   
	\ifx2#1\begin{align*}#2\end{align*}\fi   
	\ifx3#1\begin{align}#2\end{align}\fi    
	\ifx4#1\begin{gather*}#2\end{gather*}\fi  
	\ifx5#1\begin{gather}#2\end{gather}\fi   
	\ifx6#1\begin{multline*}#2\end{multline*}\fi  
	\ifx7#1\begin{multline}#2\end{multline}\fi  
	\ifx8#1\begin{multline*}\begin{split}#2\end{split}\end{multline*}\fi
	\ifx9#1\begin{multline}\begin{split}#2\end{split}\end{multline}\fi
}
\newcommand {\e }[1]{\eqref{#1}}
\newcommand {\lem }[1]{Lemma \ref{#1}}
\newcommand {\rem }[1]{Remark \ref{#1}}
\newcommand {\cor }[1]{Corollary \ref{#1}}

\newcommand {\trm }[1]{Theorem \ref{#1}}

\title{Quantitative estimates for the absolute convergence of wavelet-type series}
\author{Grigori A. Karagulyan}
\address{Institute of Mathematics of NAS of RA, Marshal Baghramian ave., 24/5, Yerevan, 0019, Armenia}
\address{Faculty of Mathematics and Mechanics, Yerevan State
	University, Alex Manoogian, 1, 0025, Yerevan, Armenia} 
\email{g.karagulyan@ysu.am}
\author{Gor A. Melkumyan}   
\address{Faculty of Mathematics and Mechanics, Yerevan State
		University, Alex Manoogian, 1, 0025, Yerevan, Armenia} 
\email {gormelqumyan6@gmail.com}

\thanks{The work was supported by the Higher Education and Science Committee
	of RA, in the frames of the research project 21AG‐1A045}

\subjclass[2010]{42C10, 42C20, 42C40}
\keywords{wavelet-type system, Weyl multiplier, unconditional convergence, absolute convergence}
\begin{document}
	
	\begin{abstract}
	We establish new quantitative estimates for general systems of functions with wavelet-type dyadic structure. These estimates are applied to obtain the optimal growth of various types of Weyl multipliers for certain wavelet-type systems. Some of our results are sufficiently general to allow the orthogonality assumption to be removed. In particular, as a consequence of these estimates we show  that the condition
	\begin{equation*}
		\sum_{n=1}^\infty\frac{1}{nw(n)}<\infty
	\end{equation*}
	is necessary and sufficient for an increasing sequence $w(n)$ to be an almost everywhere unconditional convergence Weyl multiplier for an arbitrary wavelet-type system. We also prove that $\log n$ is an almost everywhere convergence Weyl multiplier for any rearranged wavelet-type system, and that this bound is optimal. 
	\end{abstract}
	
	\maketitle
	\section{Introduction}
	\subsection{A historical overview}
	Let 
	\begin{equation}\label{r39}
		\Phi=\{\phi_n:\, n=1,2,\ldots\}\subset L^2(a,b)
	\end{equation}
	be an orthonormal system, where $(a,b)$ is an interval, possibly infinite. Recall that a sequence of positive numbers $w(n)\nearrow\infty$ is called an almost everywhere (a.e.) convergence Weyl multiplier (briefly Weyl multiplier) for $\Phi$ if every series 
	\begin{equation}\label{a1}
		\sum_{n=1}^\infty a_n\phi_n(x),
	\end{equation}
	with coefficients satisfying 
	\begin{equation}\label{a3}
		\sum_{n=1}^\infty a_n^2w(n)<\infty
	\end{equation}
	converges almost everywhere (see \cite{KaSa} or \cite{KaSt}). 
	The Menshov-Rademacher classical theorem (\cite{Men},  \cite{Rad}) states that the sequence $\log^2 n$ is a Weyl multiplier for any orthonormal system. The sharpness of $\log^2 n$ in this statement was established by Menshov in the same paper \cite{Men}: he constructed an orthonormal system for which any positive nondecreasing sequence $w(n)=o(\log^2n)$ fails to be Weyl multiplier. The following definitions are standard in the theory of orthogonal series. 
	\begin{definition}
		A sequence of positive numbers $w(n)\nearrow\infty$ is said to be an almost everywhere (a.e.) convergence Weyl multiplier for the rearrangements (an RC-multiplier) of an orthonormal system $\Phi$ if it is a Weyl multiplier for any subsystem $\{\phi_{n_k}\}$, where $\{n_k\}$ is a sequence of distinct natural numbers (not necessarily increasing).
	\end{definition}
	\begin{definition}
		A sequence of positive numbers $w(n)\nearrow\infty$ is called an a.e. unconditional convergence Weyl multiplier (a UC-multiplier) for an orthonormal system $\Phi$ if,  under condition \e{a3} series \e{a1} converges a.e. after any rearrangement of its terms.
	\end{definition}
	\begin{remark}
		These definitions are originally given for orthonormal systems, but we will also use them for non-orthogonal systems.
	\end{remark}
	Observe that according to the Menshov-Rademacher theorem we have that $\log^2 n$ is an RC-multiplier for any orthonormal system $\Phi$ and the counterexample of Menshov tells us that $\log^2 n$ is optimal in this statement. Papers \cite{Orl, Tan} provide a necessary and sufficient condition for a sequence to be a UC-multiplier for all orthonormal systems. In particular, these results imply that the sequence $\log^2n(\log\log n)^{1+\varepsilon}$ with $\varepsilon >0$ is a UC-multiplier for any orthonormal system, whereas $\log^2n\log\log n$ is not a UC-multiplier for some orthonormal systems.
	
	The study of RC and UC multipliers for classical orthonormal systems is a longstanding topic in the theory of orthogonal series. It is well known that the trigonometric, Walsh, Haar and Franklin systems are convergence system, i.e. the sequence $w(n)\equiv 1$ is a Weyl multiplier for those systems. However, $w(n)\equiv 1$ fails to be an RC-multiplier for these systems. A.~N.~Kolmogorov \cite{Kol} was the first to observe this phenomenon for the trigonometric system, although he did not publish a proof of this fact. A proof was later provided by Z.~Zahorski \cite{Zag}. Afterward, developing A.~Zahorski's argument, P.~L.~Ul\cprime yanov \cite{Uly6, Uly7} established analogous results for Haar and Walsh systems. Using techniques based on the Haar system, A.~M.~Olevskii \cite{Ole} proved that this phenomenon is, in fact, typical for arbitrary complete orthonormal systems.
	Later, P.~L.~Ul\cprime yanov \cite{Uly1,Uly3,Uly4} determined the optimal growth of RC and UC multipliers for the Haar system. 	
	\begin{OldTheorem}[Ul\cprime yanov, \cite{Uly4, Uly3}]\label{OT1}
		The sequence $\log n$ is an RC-multiplier for the Haar system. Conversely, any  nondecreasing sequence $w(n)$ satisfying $w(n)=o(\log n)$ is not an RC-multiplier for the Haar system.
	\end{OldTheorem}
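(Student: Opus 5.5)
Both halves are proved separately. For the sufficiency I would reduce everything to a maximal inequality on each dyadic level, and for the optimality I would build an explicit rearranged Haar series.

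\textbf{Sufficiency: the maximal estimate.} Write the Haar functions as $h_I$, indexed by dyadic intervals $I$. The Haar function numbered $n$ sits on level $k\approx\log_2 n$, with $|h_I|=|I|^{-1/2}$ on $I$. Let $\sigma$ be an injection and consider the partial sums of $\sum_j a_j h_{\sigma(j)}$. Fix a level $k$ and let $B_k$ be the block of coefficients whose Haar functions lie at level $k$. These functions have pairwise disjoint supports. So at each point $x$ at most one term of the block is nonzero, and it equals $\pm a_j 2^{k/2}$.

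The idea is that a partial sum of the rearranged series, viewed at level $k$, contains either this single term or not. The whole maximal function is then controlled by a level-wise sum. More precisely, $\sup_N |S_N(x)|$ is bounded by the sum over levels $k$ of the single nonzero level-$k$ term at $x$. That gives a $\sum_k$-type bound, which is too crude. The crude bound is refined as follows.

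Split the levels into those with $k\le K$ and $k>K$. Since the whole Haar series converges a.e. in its natural order whenever $\sum a^2<\infty$, it is enough to prove that the tails become small a.e. For this I would use Ul'yanov's approach. For each dyadic block $\{n: 2^m\le n<2^{m+1}\}$ in the index of the rearranged series, consider the functions involved; there are at most $2^m$ of them. Use the pointwise bound
\[
\max_N\Bigl|\sum_{j\in\text{block},\,j\le N} a_j h_{\sigma(j)}(x)\Bigr|
\le \sum_k |c_k(x)|,
\]
where $c_k(x)$ is the level-$k$ term active at $x$.

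Combine this with a Menshov–Rademacher type dyadic splitting of the index range. The Rademacher–Menshov chaining gives $\log^2 n$ for general orthonormal systems. For Haar, each chaining step is a martingale-like sum, controlled by Doob/Burkholder in $L^2$ without an extra $\log$ factor. Because of the disjoint supports, only one of the two $\log$ factors survives.

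Concretely, I would prove
\[
\Bigl\|\max_{N\le n}\Bigl|\sum_{j\le N} a_j h_{\sigma(j)}\Bigr|\Bigr\|_2^2 \lesssim \log n\sum_{j\le n} a_j^2 .
\]
The approach is a dyadic chaining over the index. For each of the $\log n$ dyadic scales, bound the maximum over the sub-blocks at that scale by the square function. The square function has $L^2$ norm $\bigl(\sum a_j^2\bigr)^{1/2}$. Then apply Cauchy–Schwarz across the $\log n$ scales. The standard Kantorovich/Tandori argument then converts this maximal inequality into a.e. convergence under $\sum a_n^2\log n<\infty$.

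\textbf{Main obstacle.} The delicate point is to avoid the second $\log$: the maximum inside one scale must be controlled without a further loss. I would handle it by noting that at one scale the sub-block sums are orthogonal, and the quantity needed at each scale is the maximum of these sums. I would bound that maximum by the square sum, i.e. $\max\le(\sum|\cdot|^2)^{1/2}$, which loses nothing in $L^2$.

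\textbf{Sharpness.} I would take $w(n)=o(\log n)$ and build a counterexample in finitely many pieces, each living on a fresh set of Haar indices. Piece $m$ uses the Haar functions of levels in $[p_m,p_m+q_m)$. On these I would mimic the Kolmogorov–Ul'yanov construction: a polynomial $P_m$ of length about $2^{q_m}$ with $\|P_m\|_2^2\approx1$ and rearranged partial-sum maximum $\gtrsim q_m^{1/2}$ on a set of large measure. The heuristic is that ordering the terms according to the signs of a sawtooth produces $\sqrt{\log}$ growth in amplitude.

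I would scale the pieces so that $\sum\|P_m\|^2 w(\text{indices})<\infty$ while the divergence persists, using $w=o(\log)$ and choosing $p_m$ fast-growing. The pieces are placed so that their supports equidistribute, and then a Borel–Cantelli argument gives divergence a.e.
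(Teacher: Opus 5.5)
Your sufficiency half has a genuine gap. The argument you actually spell out for the maximal inequality is the Rademacher--Menshov argument, and it yields $\log^2 n$, not $\log n$.

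To see this, take $n=2^\nu$ and let $\mathcal I_s$ be the dyadic blocks of indices of length $2^s$. You bound $\max_{N\le n}|S_N|\le\sum_{s=0}^{\nu}X_s$, where $X_s=\max_{I\in\mathcal I_s}|S_I|\le\bigl(\sum_{I\in\mathcal I_s}|S_I|^2\bigr)^{1/2}$. Cauchy--Schwarz across scales gives $\bigl(\sum_s X_s\bigr)^2\le(\nu+1)\sum_s X_s^2$. After integration, \emph{each} of the $\nu+1$ scales contributes the full $\sum_j a_j^2$, so the total is $(\nu+1)^2\sum_j a_j^2$. Your observation that $\max\le(\sum|\cdot|^2)^{1/2}$ loses nothing at one scale is true, but that is not where the second logarithm comes from.

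This chain uses nothing beyond orthogonality of the block sums. It would therefore apply verbatim to Menshov's system, for which no $o(\log^2 n)$ multiplier exists, so it cannot produce $\log n$. Doob/Burkholder does not rescue it. As soon as some $h_J$ precedes $h_I$ with $J\subsetneq I$, the partial sums are not a martingale for any filtration: $h_I$ is constant and nonzero on $J$, so its conditional expectation given $h_J$ is nonzero. This is precisely why rearrangements matter here. Your claim that disjoint supports within a level remove one logarithm is asserted but never used.

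What is missing is the Haar-specific estimate that makes the bound you discarded as ``too crude'' already sufficient. For distinct dyadic intervals $I_1,\dots,I_N$ ($N\ge2$) and $b_j\ge0$,
\begin{equation*}
\Bigl\|\sum_{j=1}^N b_j|I_j|^{-1/2}\mathbf 1_{I_j}\Bigr\|_2^2\lesssim\log N\sum_{j=1}^N b_j^2 .
\end{equation*}
This is Lemma~\ref{L5}. The paper proves it by a stopping-time decomposition of the family into layers of overlap at most $2\lceil\log N\rceil$, each sitting in a small part of the earlier ones, followed by the almost-orthogonality Lemma~\ref{L3}.

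It can also be checked by expanding the square. A nested pair at level distance $d$ contributes $b_Ib_J2^{-d/2}\le\frac12(b_I^2+2^{-d}b_J^2)$. Each interval has one ancestor and $2^d$ descendants at distance $d$, so the pairs with fixed $d$ contribute at most $\sum_jb_j^2$; this handles $d\le2\log_2N$. The pairs with $d>2\log_2N$ contribute less than $N^{-1}(\sum_jb_j)^2\le\sum_jb_j^2$.

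Apply this on each position block $(2^m,2^{m+1}]$, together with the pointwise bound $\max_N|\cdot|\le\sum_k|c_k(x)|$ you already wrote. You get $\|\delta_m\|_2^2\lesssim m\sum_{\text{block}}a_j^2$ with no chaining at all. The a.e.\ convergence of $S_{2^m}$ then finishes exactly as in the proof of Corollary~\ref{C2}.

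Your sharpness sketch is workable. Ordering the Haar functions of a piece by the midpoints of their supports is a tree rearrangement as in Lemma~\ref{L14}, giving block sums $\ge\frac12\sum|\cdot|$ at every point. However, for RC-multipliers $w$ is evaluated at positions in the new order. A piece that oscillates on a set of large measure has about $2^{p_m+q_m}$ terms. So take $q_m\ge p_m$, with $p_m$ chosen so that $\sup_{k\ge p_m}w(2^k)/k$ is summable in $m$, rather than merely letting $p_m$ grow fast. In the paper's treatment (Corollary~\ref{C2}), this half instead follows from Pole\v{s}\v{c}uk's theorem combined with the UC-divergence result.
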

	\begin{OldTheorem}[Ul\cprime yanov, \cite{Uly1}]\label{OT2}
		A nondecreasing sequence $w(n)$ is a UC-multiplier for the Haar system if and only if it satisfies
		\begin{equation}\label{a4}
			\sum_{n=1}^\infty\frac{1}{nw(n)}<\infty.
		\end{equation}
	\end{OldTheorem}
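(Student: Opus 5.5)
Both directions of the criterion \e{a4} for the Haar system $\{h_n\}$, normalized in $L^2(0,1)$ with $h_n$ supported on a dyadic interval of length $\sim 1/n$ and $|h_n|\sim\sqrt n$ there, reduce to questions about the majorant $\sum_n|a_n h_n(x)|$.

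\textbf{Sufficiency: \e{a4} implies UC.} I would show that \e{a4} together with \e{a3} forces $\sum_n |a_nh_n(x)|<\infty$ a.e.; then every rearrangement converges a.e. Group the indices by dyadic level $k$, i.e. $2^k\le n<2^{k+1}$. At a fixed $x$ exactly one Haar function of level $k$ is nonzero, with value $\pm 2^{k/2}$. So, writing $a_{n_k(x)}$ for the coefficient of that function, the level-$k$ contribution at $x$ is
\[
2^{k/2}|a_{n_k(x)}|.
\]
Set $A_k=\big(\sum_{2^k\le n<2^{k+1}}a_n^2\big)^{1/2}$. The first moment gives
\[
\int_0^1 \sum_{2^k\le n<2^{k+1}}|a_nh_n|=2^{-k/2}\sum|a_n|\le A_k ,
\]
so integrability alone would only need $\sum A_k<\infty$, which \e{a3} does not give. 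Instead I apply Cauchy--Schwarz pointwise with weights $w(2^k)$:
\[
\sum_k 2^{k/2}|a_{n_k(x)}|\le\Big(\sum_k w(2^k)\,2^k a_{n_k(x)}^2\,\lambda_k(x)\Big)^{1/2}\Big(\sum_k \frac{1}{w(2^k)\lambda_k(x)}\Big)^{1/2}.
\]
A cleaner route avoids the auxiliary $\lambda_k$ altogether. Since
\[
\int_0^1 \sum_k w(2^k)\,2^k a_{n_k(x)}^2\,dx=\sum_k w(2^k)A_k^2\lesssim\sum_n a_n^2w(n)<\infty ,
\]
the sum $\sum_k w(2^k)2^ka_{n_k(x)}^2$ is finite a.e. Applying Cauchy--Schwarz with the weights $1/w(2^k)$ then gives
\[
\sum_k 2^{k/2}|a_{n_k(x)}|\le\Big(\sum_k w(2^k)2^ka_{n_k(x)}^2\Big)^{1/2}\Big(\sum_k\frac1{w(2^k)}\Big)^{1/2}.
\]
By monotonicity of $w$, $\sum_k 1/w(2^k)<\infty$ is equivalent to \e{a4} (Cauchy condensation). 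This yields absolute convergence a.e.

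\textbf{Necessity: UC implies \e{a4}.} Suppose $\sum 1/(nw(n))=\infty$, i.e. $\sum_k 1/w(2^k)=\infty$. Choose $a_n=\varepsilon_k 2^{-k}$ for $n$ in level $k$, with $\varepsilon_k=c_k/w(2^k)$ for a sequence $c_k$ to be tuned. Then $a_n^2w(n)$ summed over level $k$ is about $2^{-k}\varepsilon_k^2w(2^k)$. The pointwise absolute series is
\[
\sum_k 2^{k/2}\varepsilon_k2^{-k},
\]
which is too small, so this normalization is wrong. Better: put all the level-$k$ mass on each function equally, $a_n=b_k2^{-k/2}$. Then $\sum_n a_n^2w(n)\approx\sum_k b_k^2w(2^k)$ and $\sum_n|a_nh_n(x)|=\sum_k|b_k|$ at every $x$. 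Taking $b_k=1/w(2^k)$ (after a standard smoothing or blocking of $w$ so that $\sum b_k^2w(2^k)=\sum 1/w(2^k)$ can be replaced by a convergent sum while $\sum b_k$ still diverges) gives $\sum b_k^2w(2^k)<\infty$ but $\sum|b_k|=\infty$. This choice follows the Abel--Dini-type lemma: from $\sum 1/w_k=\infty$, pick $b_k=1/(w_kS_k)$ with $S_k=\sum_{j\le k}1/w_j$. Then $\sum b_k^2w_k=\sum 1/(w_kS_k^2)<\infty$ and $\sum b_k=\sum 1/(w_kS_k)=\infty$.

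It remains to choose signs of $a_n$ and a rearrangement that diverges a.e. At each $x$, group the terms into positive and negative parts. Since $|h_n(x)|$ sums to infinity along the level structure, a Riemann-type rearrangement is needed that works simultaneously for almost every $x$. The main obstacle is exactly this step: passing from pointwise failure of absolute convergence to a single rearrangement diverging a.e.

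I would do it block by block. Within a block of levels $[k_j,k_{j+1})$ with $\sum b_k\approx 1$, order the Haar functions so that, for each $x$, the terms with $h_n(x)a_n>0$ come first, up to some set of small measure. For Haar functions this can be arranged by the Ul\cprime yanov construction: split by the sign of $h_n$ on the left and right halves and use dyadic self-similarity. The partial sums then oscillate by at least a constant on a set of measure at least $1/2$ in each block. Borel--Cantelli together with independence across blocks gives divergence a.e.
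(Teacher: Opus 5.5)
\textbf{Sufficiency.} Your sufficiency argument is correct and follows the paper's route (Theorem~\ref{T1} via Lemma~\ref{L9}: Cauchy--Schwarz applied to the absolute majorant). Your remark that integrability would need $\sum_k A_k<\infty$, ``which \eqref{a3} does not give'', is misleading. Together with \eqref{a4}, Cauchy--Schwarz gives $\sum_k A_k\le(\sum_k w(2^k)A_k^2)^{1/2}(\sum_k w(2^k)^{-1})^{1/2}<\infty$. So your first-moment computation already finishes the proof; the pointwise version is fine too, and the line with $\lambda_k$ should simply be dropped.

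\textbf{Coefficients for necessity.} The choice $a_n=b_k2^{-k/2}$ on level $k$, with an Abel--Dini $b_k$, is right and mirrors the proof of Theorem~\ref{T4}. Run Abel--Dini with $w(2^{k+1})$ rather than $w(2^k)$, since on level $k$ the correct bound is $\sum a_n^2w(n)\le b_k^2w(2^{k+1})$. Note also that with $a_n>0$ there are no signs left to choose.

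\textbf{The gap: the rearrangement.} The gap is the step you yourself call the main obstacle: you never construct the rearrangement. Saying it ``can be arranged by the Ul\cprime yanov construction'' amounts to citing the proof of the very theorem you are proving. Your hedged fallback is also unsupported: an ordering correct only off a small set, oscillation on a set of measure at least $1/2$, then Borel--Cantelli with independence across blocks. The event for block $j$ depends on all binary digits of $x$ up to level $k_{j+1}$, and these overlap the digits governing earlier blocks. So independence is not automatic for an ordering you have not specified.

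\textbf{How to fix it.} The missing idea is the tree ordering of Lemma~\ref{L14}, which for Haar is explicit and exact.
\begin{itemize}
\item Take disjoint blocks of levels $B_j=[k_j,k_{j+1})$ with $\sum_{k\in B_j}b_k\ge 1$.
\item List the Haar functions $h_I$ with level in $B_j$ by increasing midpoint $c_I$ of $I$; distinct dyadic intervals have distinct midpoints.
\item If $h_I>0$ on the left half of $I$, then for non-dyadic $x\in I$ we have $h_I(x)<0$ exactly when $c_I<x$. Hence at every such $x$, all negative terms of the block precede all positive ones.
\item Let $N_j(x)$ and $P_j(x)$ be the total negative and positive mass at $x$, so $N_j+P_j=\sum_{k\in B_j}b_k\ge 1$. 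The partial sums inside block $j$ first fall by $N_j(x)$ and then rise by $P_j(x)$, so two of them differ by at least $1/2$.
\end{itemize}
This holds for every $j$ and every non-dyadic $x$, so the rearranged series diverges everywhere off the dyadic rationals.

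No Borel--Cantelli, independence, truncation (the $\bar\Psi$ of Lemma~\ref{L12}) or density argument (Lemma~\ref{L13}) is needed. The Haar system is itself a tree system, and $\sum_n|a_nh_n(x)|=\sum_k b_k$ does not depend on $x$.
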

	
	G.~G.~Gevorkyan in \cite{Gev,Gev1,Gev2} proved the analogue of \trm{OT2} for the Franklin, Ciesielski and Str\"omberg wavelet-type systems. In \cite{BaGe} R.~H.~Barchudaryan and G.~G.~Gevorkyan investigated the same problem for generalized Haar and Franklin systems. For the trigonometric and Walsh systems, the problem of characterizing RC and UC multipliers remains open. These problems were first posed in P.~Ulyanov’s well-known survey paper \cite{Uly5} (1964) and have since been revisited in several of his subsequent works (see \cite{Uly4}, p.~1041; \cite{Uly8}, p.~80; \cite{Uly9}, p.~57).
	We first note that the Menshov-Rademacher theorem implies that $\log^2 n$ is an RC-multiplier for both the trigonometric and Walsh systems. On the other hand, no sequence $w(n)=o(\log^2n)$ is known to be an RC-multiplier for either of these systems. 
	Lower bounds for RC and UC multipliers of the Walsh system have been studied in \cite{Boch, Nak5, Nak3, Tan3}. The best result to date, obtained independently by S.~Bochkarev \cite{Boch2,Boch} and H.~Nakata \cite{Nak3}, shows that if an increasing sequence $w(n)$ does not satisfy Ul'yanov’s property \eqref{a4}, then it is not a UC-multiplier for the Walsh system. For the trigonometric system, lower bounds for RC and UC multipliers have been investigated in \cite{Mor, Tan2, Nak1, Nak2, Seroj, Kar2, Kar3}. An analogue of the Bochkarev–Nakata theorem for the trigonometric system was recently established by the author in \cite{Kar2}.
	
We recall also the following result of Pole\v{s}\v{c}uk \cite{Pol}, which clarifies the relationship between RC and UC multipliers of an orthonormal system.
\begin{OldTheorem}\label{Pol}
	Let $R(x)$, $x\in [0,\infty)$, be a nondecreasing positive function, satisfying
	\begin{equation}
		\int_1^\infty R(x)^{-1}dx<\infty.
	\end{equation}
	If a nondecreasing sequence $w(n)$ is an RC-multiplier for an orthonormal system $\{\phi_n(x)\}_{n\ge1}$, then the sequence $R(w(n))$ is a UC-multiplier for this system.
\end{OldTheorem}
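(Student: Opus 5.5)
The plan is to derive the theorem from a uniform maximal inequality for finite rearranged blocks, combined with a dyadic decomposition of the indices according to the size of $w(n)$. Fix coefficients with $\sum_n a_n^2R(w(n))<\infty$ and a bijection $\sigma$ of $\mathbb N$. We must show that $\sum_k a_{\sigma(k)}\phi_{\sigma(k)}(x)$ converges a.e. The monotonicity of $R$ and $\int_1^\infty R^{-1}<\infty$ give, by Cauchy condensation,
\begin{equation*}
\sum_{j\ge1}\frac{2^{j}}{R(2^{j-1})}<\infty .
\end{equation*}
We may assume $w(n)\ge1$ by a harmless normalization; we also use $w(n)\nearrow\infty$, which is part of the definition of a multiplier.

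\emph{Step 1 (uniform finite maximal inequality).} We claim there is a constant $C$ such that for every finite set $E\subset\mathbb N$ with $|E|\le N$, every ordering $n_1,\dots,n_{|E|}$ of $E$, and all reals $b_n$,
\begin{equation*}
\Bigl\|\max_{m\le |E|}\Bigl|\sum_{k\le m} b_{n_k}\phi_{n_k}\Bigr|\Bigr\|_{2}\le C\, w(N)^{1/2}\Bigl(\sum_{n\in E} b_n^2\Bigr)^{1/2}.
\end{equation*}
This is the step I expect to be the main obstacle. I would argue by contradiction. If the inequality fails, there are finite blocks $E^{(i)}$ with orderings and coefficients for which the ratio exceeds $4^i$. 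By Chebyshev's inequality and rescaling, each block can be given coefficients with $\sum b_n^2 w(|E^{(i)}|)\le 2^{-i}$, while its maximal partial sum exceeds $1$ on a set of measure bounded below.

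One then passes to disjoint blocks, using the fact that a given finite set of functions contributes little on a finite index set, or by discarding overlaps through a subsequence argument. Concatenating these blocks in order produces a single subsystem $\{\phi_{n_k}\}$ of distinct indices. Its $k$-th term sits at position $k$, but the positions inside block $i$ are shifted by the lengths of the earlier blocks. To control this, I would choose the blocks inductively so that the block sizes grow fast, and use the almost-orthogonality and smallness of earlier blocks. This should give coefficients with $\sum_k b_k^2w(k)<\infty$ whose partial sums fail to be Cauchy on a set of positive measure, contradicting that $w$ is an RC-multiplier. Getting the position shift and the measure-of-divergence bookkeeping right is the delicate part; a closed-graph or Banach-principle argument on the Hilbert space $\{b:\sum b_k^2w(k)<\infty\}$ for each fixed subsystem is the fallback.

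\emph{Step 2 (dyadic blocks).} Put $E_j=\{n:\,2^{j-1}\le w(n)<2^{j}\}$ for $j\ge1$. Each $E_j$ is finite because $w\to\infty$. Moreover $|E_j|\le N_j:=\max\{n: w(n)<2^j\}$, so $w(N_j)\le 2^j$. The rearrangement $\sigma$ induces an ordering of each $E_j$. Let $M_j(x)$ be the maximal partial sum of $\sum_{n\in E_j}a_n\phi_n$ taken in this induced order. By Step~1,
\begin{equation*}
\|M_j\|_2\le C\,2^{j/2}A_j,\qquad A_j^2=\sum_{n\in E_j}a_n^2\le \frac{1}{R(2^{j-1})}\sum_{n\in E_j}a_n^2R(w(n)).
\end{equation*}
Cauchy--Schwarz then gives
\begin{equation*}
\sum_j\|M_j\|_2\le C\Bigl(\sum_j \frac{2^j}{R(2^{j-1})}\Bigr)^{1/2}\Bigl(\sum_n a_n^2R(w(n))\Bigr)^{1/2}<\infty,
\end{equation*}
hence $\sum_j M_j(x)<\infty$ a.e.

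\emph{Step 3 (conclusion).} Every partial sum $S_m$ of the rearranged series is the sum over $j$ of partial sums of the $j$-th block in its induced order. Given $J$, once $m$ is so large that all of $E_1,\dots,E_J$ have appeared, we get for $m'>m$
\begin{equation*}
|S_{m'}(x)-S_m(x)|\le 2\sum_{j>J}M_j(x).
\end{equation*}
The right-hand side tends to $0$ as $J\to\infty$ for a.e. $x$, by Step~2. Hence $(S_m(x))$ is Cauchy a.e., which proves that $R(w(n))$ is a UC-multiplier.
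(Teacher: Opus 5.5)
The paper does not prove this statement. It quotes it from Pole\v{s}\v{c}uk and uses it as a black box in the divergence part of Corollary~\ref{C2}, so your argument has to stand on its own.

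\textbf{What works.} Steps~2 and~3 are sound: the condensation bound $\sum_j 2^j/R(2^{j-1})<\infty$, the level sets $E_j$ of $w$, the estimate $A_j^2\le R(2^{j-1})^{-1}\sum_{n\in E_j}a_n^2R(w(n))$, and the reduction of unconditional convergence to $\sum_j M_j<\infty$ a.e.\ are all correct.

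\textbf{The gap is Step~1.} You start from the failure of a \emph{strong} $L^2$ bound and claim, ``by Chebyshev's inequality'', that the maximal partial sum exceeds $1$ on a set of measure bounded below. Chebyshev goes the other way. A huge $\|M\|_2$ is compatible with $M$ being large only on a set of tiny measure, so you get no lower bound on $|\{M>1\}|$, and the contradiction with a.e.\ convergence never materializes.

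At best, gluing could give an in-measure bound, and even that is not delivered. The RC hypothesis charges the weight $w(L_{i-1}+k)$ at the shifted positions of block $i$. For a merely nondecreasing $w$ this is not controlled by $w(|E^{(i)}|)$: you cannot choose the sizes of the failing blocks, and $w$ need not be doubling. Also, on an infinite $(a,b)$, sets of measure bounded below can escape to infinity.

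Your fallback, the Banach principle for a fixed subsystem, gives only non-quantitative continuity in measure. That is too weak to sum over $j$ in Step~2.

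\textbf{How to close it.} No uniformity over all finite blocks is needed.
\begin{enumerate}
\item Given $\sigma$, list $E_1,E_2,\dots$ consecutively, each in the order induced by $\sigma$. This is a single rearrangement $\{\phi_{n_k}\}$ of the system.
\item An element of $E_j$ sits at a position $k\le \#\{n:\,w(n)<2^j\}=N_j$, so $w(k)<2^j$ automatically. This is exactly the bound you wanted from Step~1.
\item For this one subsystem, the RC hypothesis and the Banach principle make $T^*c=\sup_m|\sum_{k\le m}c_k\phi_{n_k}|$ continuous in measure from the Hilbert space $\{c:\sum c_k^2w(k)<\infty\}$ into $L^0(K)$, for each bounded interval $K$.
\item Nikishin's theorem upgrades this to weak type $(2,2)$ on a set $F_\varepsilon\subset K$ with $|K\setminus F_\varepsilon|<\varepsilon$. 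Hence $\|T^*c\|_{L^1(F_\varepsilon)}\le C_\varepsilon\|c\|$.
\item Let $c^{(j)}$ be the coefficients restricted to the positions of $E_j$. Then $M_j\le T^*c^{(j)}$ and $\|c^{(j)}\|^2\le 2^jA_j^2$.
\end{enumerate}
With this, your Step~2 goes through with $L^1(F_\varepsilon)$ in place of $L^2$. You get $\sum_j M_j<\infty$ a.e.\ on $F_\varepsilon$, hence on $K$ and on $(a,b)$, and Step~3 is unchanged.
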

It follows from this result that if the sequence $\log n$ is an RC-multiplier for an orthonormal system, then any increasing sequence $w(n)$ satisfying \e{a4} is a UC-multiplier for the same system. In particular, in this setting \trm{OT1} implies \trm{OT2}.

	\begin{definition}
		A sequence of positive numbers $w(n)\nearrow\infty$ is said to be a SRC-multiplier for an orthonormal system \e{r39} if it is an RC-multiplier for every orthonormal system of non-overlapping polynomials of the form
		\begin{equation}\label{y22}
			f_n=\sum_{k\in G_n}a_k\phi_k,\quad n=1,2,\ldots,
		\end{equation}
	where $G_n$ are pairwise disjoint sets of positive integers.
	\end{definition}
	Observe that if a sequence $w(n)$ is an SRC-multiplier for an orthonormal system \e{r39}, then it is also an RC-multiplier for $\Phi$, since every subsequence $\{\phi_{n_k}\}$ can be viewed as a polynomial system \e{y22} with $G_k=\{n_k\}$.  
	SRC-multiplier problem were first considered in the papers \cite{Kar1,Kar4,Kar5,KaKa}. In particular, A.~Kamont and the author studied this problem for wavelet-type systems. To state  the results of \cite{KaKa} (\trm{OT4}), we recall the definition of a wavelet-type system. Let $(A,B)$ be an interval, possible infinite. Denote
	\begin{equation}
		\ZZ^2(A,B)=\{(n,j)\in \ZZ^2:\, 2^{-n}\le B-A,\quad j\cdot2^{-n}\in (A,B]\}
	\end{equation}
	A system of functions (not necessarily orthogonal)
	\begin{equation}\label{y21}
		\Phi=\{\phi_{n,j}:\,(n,j)\in \ZZ^2(A,B)\}\subset L^2(\ZR)\cap L^\infty(\ZR)
	\end{equation}
	is called wavelet-type system if for every $(n,j)\in \ZZ^2(A,B)$ we have
	\begin{align}
		&\supp(\phi_{n,j})\subset [A,B],\\
		&\int_\zR\phi_{n,j}(t)dt=0,\label{u65}\\
		&|\phi_{n,j}(t)|\le c2^{n/2}\cdot \xi\big(2^nt-j)\big),\quad x\in [A,B],\label{u43}\\
		&|\phi_{n,j}(t)-\phi_{n,j}(t')|\le c2^{n/2}(2^n|t-t'|)^\alpha \cdot \xi\big(2^nt-j)\big)\text { if }t,t'\in [A,B], \label{u44}
	\end{align}
	where 
	 \begin{equation}\label{u51}
	 	\xi(x)=\frac{1}{(1+|x|)^{1+\beta}},\quad x\in \ZR,
	 \end{equation}
	 and $0<\alpha,\beta \le 1$. In some cases, we will additionally require wavelet-type systems to be orthonormal or merely $L^2$-normalized. The two principal choices for the interval $(A,B)$ in this definition are the real line $\ZR$ and the interval $[0,1]$. In the former case, $\ZZ^2(\ZR)=\ZZ^2$, while for $(A,B)=(0,1)$, 
	 \begin{equation}
	 	\ZZ^2(0,1)=\{(n,j)\in \ZZ^2:\,n\ge 0,\quad 1\le j\le 2^n \}
	 \end{equation}
	 \begin{remark}\label{rem1}
	 	Examples of wavelet-type systems include classical wavelets and orthonormal spline systems of arbitrary order, defined either on $[0,1]$ or on the real line $\ZR$; in particular, the Franklin, Ciesielski, and Strömberg systems considered in \cite{Gev,Gev1,Gev2}. 
	 \end{remark}
	\begin{OldTheorem}[\cite{KaKa}]\label{OT4}
		The sequence $\log n$ is SRC-multiplier for every orthonormal wavelet-type system.
	\end{OldTheorem}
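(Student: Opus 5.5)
The plan is to reduce Theorem \ref{OT4} to a maximal inequality for rearranged partial sums of a sequence of non-overlapping polynomials. Let $f_k=\sum_{(n,j)\in G_k}a_{n,j}\phi_{n,j}$ be an orthonormal system of such polynomials, with $G_k$ pairwise disjoint, and let $\sigma$ be any injection of the indices. By the standard dyadic-block argument (Kronecker/Abel summation over blocks $2^m\le k<2^{m+1}$), it suffices to prove the following. For every $N$ and all coefficients $b_k$, $k=1,\dots,N$,
\begin{equation*}
\Big\|\max_{1\le m\le N}\Big|\sum_{k=1}^m b_k f_k\Big|\Big\|_{L^2}\lesssim \sqrt{\log (N+1)}\,\Big(\sum_{k=1}^N b_k^2\Big)^{1/2}.
\end{equation*}
This $\sqrt{\log N}$ maximal bound, in place of the Menshov–Rademacher $\log N$, is what yields $\log n$ as a Weyl multiplier. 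Applied to the subsystem $\{f_{n_k}\}$, the blocks $\sum_{2^m\le k<2^{m+1}}$ with weights $w(k)=\log k$ then give a.e. convergence in the usual way.

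To prove the maximal bound I would use a Rademacher-type binary splitting. Organize the indices $1,\dots,N$ dyadically into $\log N$ levels. At each level, the maximal partial sum is controlled by the square function of the level's block sums. The task is then to show that each level contributes at most $\|\sum b_kf_k\|_2^2$ to $\int\sum_{\text{blocks}}|\cdot|^2$, uniformly over levels, with no extra logarithm. A bound of order $\sqrt{\log N}$ per level would only reproduce Menshov–Rademacher.

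The wavelet structure enters at this point. The polynomials $F_I=\sum_{k\in I}b_kf_k$ for disjoint index intervals $I$ are sums of wavelets with disjoint coefficient supports. Their sum over the blocks is, pointwise, dominated by a wavelet square function. The key inequality I would aim for is
\begin{equation*}
\Big\|\Big(\sum_{I}|F_I|^2\Big)^{1/2}\Big\|_{L^2}^2\lesssim\sum_{(n,j)} c_{n,j}^2\asymp\Big\|\sum_k b_kf_k\Big\|_2^2 .
\end{equation*}
This is obtained from almost-orthogonality (Cotlar–Stein / Schur test) using the decay \e{u43}, the Hölder condition \e{u44} and the cancellation \e{u65}. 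These give $|\langle\phi_{n,j},\phi_{n',j'}\rangle|\lesssim 2^{-\alpha'|n-n'|}\,\xi$-type off-diagonal decay, so every sub-collection of wavelets is a Bessel sequence with a uniform constant. Each level is then bounded by a constant, and the full maximal function obeys a $\sqrt{\log N}$ bound via a Rademacher–Menshov sum of $\log N$ levels after Cauchy–Schwarz. I would also need a second ingredient: an $L^2$-bounded wavelet maximal operator (dominated by the Hardy–Littlewood maximal function through \e{u43}). Its role is to handle the maximal operator within a single polynomial $f_k$ when a rearranged partial sum cuts across one block.

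The main obstacle is the combination of two facts. The polynomials $f_k$ are orthonormal, but their coefficient norms $\sum_{G_k}a_{n,j}^2$ need not be comparable to $1$, since the system is only assumed orthonormal and not necessarily a Riesz basis. I would first check that the Bessel property from almost-orthogonality, together with orthonormality of $\{f_k\}$, suffices. If it does not, I would fall back on a good-$\lambda$/stopping-time argument on dyadic intervals. That argument exploits that in each dyadic scale only boundedly many wavelets essentially overlap a point, which gives the logarithmic count directly.
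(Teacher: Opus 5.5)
There is a genuine gap: the scheme you describe cannot produce $\sqrt{\log N}$. In the binary splitting, $\max_{m\le N}|\sum_{k\le m}b_kf_k(x)|\le\sum_{\ell=1}^{L}|F_{I_\ell}(x)|$, with one block $I_\ell$ from each of $L\approx\log_2N$ levels. Cauchy--Schwarz then gives
\[
\Big\|\max_{m\le N}\Big|\sum_{k\le m}b_kf_k\Big|\Big\|_2^2\le L\sum_{\ell=1}^{L}\sum_{I\in\text{level }\ell}\|F_I\|_2^2=L^2\sum_{k=1}^N b_k^2 .
\]
So per-level bounds ``with no extra logarithm'' give exactly the Menshov--Rademacher constant $\log N$, and your bookkeeping is off by a factor $\sqrt{\log N}$. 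Worse, the key inequality you aim for is just the identity $\sum_I\|F_I\|_2^2=\sum_kb_k^2$, which follows from orthonormality of $\{f_k\}$ alone. The decay, H\"older and cancellation conditions never enter. An argument that uses only orthonormality of $\{f_k\}$ cannot beat $\log N$, because Menshov's example shows $\log^2n$ is sharp for general orthonormal systems. Two side points. Your ``main obstacle'' does not exist: $\{\phi_{n,j}\}$ is orthonormal, so $\|f_k\|_2^2=\sum_{G_k}a_{n,j}^2=1$. And partial sums of a rearranged series of the $f_k$ never cut through a single $f_k$, so no within-polynomial maximal operator is needed.

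The paper does not reprove this theorem. It cites it and indicates that the original argument rests on a good-$\lambda$ inequality of Chang--Wilson--Wolff, and that is the missing idea: the wavelet structure is used to make the maximum over $N$ partial sums cost only $\sqrt{\log N}$. Since the $G_k$ are disjoint, each $g_m=\sum_{k\le m}b_kf_k$ is a sub-sum of the wavelet expansion of $f=\sum_{k\le N}b_kf_k$. Hence its wavelet square function satisfies $Sg_m\le Sf$ pointwise, with $\|Sf\|_2\lesssim\|f\|_2$. A good-$\lambda$ inequality of the form $|\{Mg_m>2\lambda,\ Sg_m\le\varepsilon\lambda\}|\lesssim e^{-c/\varepsilon^2}|\{Mg_m>\lambda\}|$ (with $M$ a suitable maximal operator) gives subgaussian control of each $g_m$ relative to the common function $Sf$, hence
\[
\Big|\Big\{\max_{m\le N}|g_m|>2\lambda\Big\}\Big|\le|\{Sf>\varepsilon\lambda\}|+Ce^{-c/\varepsilon^2}\sum_{m=1}^N|\{Mg_m>\lambda\}| .
\]
Integrating against $\lambda\,d\lambda$ yields $\|\max_m|g_m|\|_2^2\lesssim(\varepsilon^{-2}+Ne^{-c/\varepsilon^2})\|f\|_2^2$, and the choice $\varepsilon^{-2}\asymp\log N$ gives the $\sqrt{\log N}$ bound. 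Your fallback mentions a good-$\lambda$ argument, but only as a remedy for the nonexistent normalization issue and via a bounded-overlap heuristic. That heuristic is again an $L^2$ square-function bound and does not address the union over the $N$ partial sums. The real work lies in establishing such an exponential-square good-$\lambda$ estimate for a general (non-martingale) wavelet-type system.
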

Combining \trm{OT4} with the Pole\v{s}\v{c}uk \trm{Pol} implies.
	\begin{OldTheorem}[\cite{KaKa}]\label{OT5}
		If a nondecreasing sequence $w(n)$ satisfies \e{a4}, then it is a UC-multiplier for a sequence of non-overlapping orthonormal polynomials 
		\begin{equation}\label{y23}
			f_k=\sum_{(n,j)\in G_k}a_{n,j}\phi_{n,j},\quad k=1,2,\ldots,
		\end{equation}
		where $G_k\subset \ZZ^2$ are pairwise disjoint.
	\end{OldTheorem}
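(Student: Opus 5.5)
The plan is to deduce \trm{OT5} directly from \trm{OT4} and Pole\v{s}\v{c}uk's \trm{Pol}, applied not to $\Phi$ itself but to the polynomial system \e{y23}. Fix an orthonormal wavelet-type system $\Phi$, pairwise disjoint sets $G_k\subset\ZZ^2$, and orthonormal non-overlapping polynomials $f_k$ as in \e{y23}.

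First, by \trm{OT4}, the sequence $\log n$ is an SRC-multiplier for $\Phi$. By the definition of an SRC-multiplier, this means exactly that $\log n$ is an RC-multiplier for every orthonormal system of non-overlapping polynomials of the form \e{y22}. The system $\{f_k\}$ is of that form: the index set $\ZZ^2(A,B)$ is countable, so after enumerating it the sets $G_k$ become pairwise disjoint sets of positive integers. Hence $\log n$ is an RC-multiplier for the orthonormal system $\{f_k\}$.

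Second, given a nondecreasing $w(n)$ satisfying \e{a4}, I would construct a nondecreasing positive function $R$ with $\int_1^\infty R(x)^{-1}dx<\infty$ and $R(\log n)\lesssim w(n)$ for large $n$. The natural choice is $R(x)=w(\lfloor e^x\rfloor)$ for $x\ge 1$, extended by a positive constant near zero; it is nondecreasing because $w$ is. Splitting the integral over the intervals $[\log m,\log(m+1))$ gives
\[
\int_1^\infty \frac{dx}{R(x)}\le \sum_{m\ge 2}\frac{\log(m+1)-\log m}{w(m)}\lesssim \sum_{m}\frac{1}{m\,w(m)}<\infty .
\]
Moreover $R(\log n)=w(n)$. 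Applying \trm{Pol} to the orthonormal system $\{f_k\}$ with the RC-multiplier $\log n$ then shows that $R(\log n)=w(n)$ is a UC-multiplier for $\{f_k\}$.

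The only delicate point is this matching between $R$ and $w$. One must check that $R$ is well defined and nondecreasing; floors cause no problem since $w$ is monotone. One must also account for small $n$, where $\log n$ may fall below $1$, and for the difference between $\log$ and $\log_2$ conventions. Each of these changes the multiplier only by constants or by finitely many terms, which does not affect \e{a3} or a.e. convergence.

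If \trm{Pol} requires $R(w(n))$ exactly rather than a sequence dominated by it, I would use the trivial monotonicity fact: if $v(n)$ is a UC-multiplier and $w(n)\ge c\,v(n)$, then $w(n)$ is also one, because $\sum a_n^2 w(n)<\infty$ implies $\sum a_n^2 v(n)<\infty$.
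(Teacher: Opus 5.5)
Your proposal is correct and follows the paper's route: the paper obtains Theorem \ref{OT5} in one line by combining Theorem \ref{OT4}, which makes $\log n$ an RC-multiplier for the polynomial system $\{f_k\}$, with Pole\v{s}\v{c}uk's Theorem \ref{Pol}. Your explicit choice $R(x)=w(\lfloor e^x\rfloor)$, with $R(\log n)=w(n)$ and $\int_1^\infty R(x)^{-1}dx\lesssim\sum_m 1/(m\,w(m))$, supplies the routine verification that the paper leaves implicit in its remark following Theorem \ref{Pol}.
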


	\subsection{New results}
	In the present paper, we establish certain quantitative estimates for wavelet-type systems of both convergence and divergence type. These results are then applied to the characterization of RC and UC multipliers for such systems. Some of the results are sufficiently general to allow the orthogonality assumption to be dropped.
	
	We now state the main results of the paper. We call a constant admissible if it is either an absolute constant or depends only on the parameters from the definition of wavelet-type system. Then the notation $a\lesssim b$ will stand for the inequality $a<cb$, where $c$ is an admissible constant. Let $\Phi\in L^1(\ZR)$ be an arbitrary function. For integers $m, l\in \ZZ$ we denote 
	\begin{equation}\label{x20}
		\Phi_{m,l}(x)=2^{m/2}\Phi(2^mx-l).
	\end{equation}
\begin{theorem}\label{T3}
	Let $\Phi\in L^2(\ZR)$ and suppose that $\{\Phi_{m_k,l_k},\, k=1,2,\ldots,N\}$ is an arbitrary choice of $N$ different functions from \e{x20}. Then for every positive numbers $c_k>0,\, k=1,2,\ldots,N$, the following inequality holds:
	\begin{align}
		\bigg\|\sum_{k=1}^Nc_k\Phi_{m_k,l_k}\bigg\|_2\lesssim \sqrt{\log
			N}\cdot \|\Phi\|_1\left(\sum_{k=1}^Nc_k^2\right)^{1/2}.\label{x23}
	\end{align}
\end{theorem}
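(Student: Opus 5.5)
The plan is to expand the square and run a Schur-type estimate on the Gram matrix, splitting pairs of indices by the difference of their dyadic levels. Set $F=\sum_{k=1}^N c_k\Phi_{m_k,l_k}$. Since $c_k>0$,
\begin{equation*}
\|F\|_2^2\le \sum_{k,k'} c_kc_{k'}\,|\langle \Phi_{m_k,l_k},\Phi_{m_{k'},l_{k'}}\rangle| .
\end{equation*}
Group the indices by level. For $m\in\ZZ$ let $S_m=\{k:\, m_k=m\}$ and $F_m=\sum_{k\in S_m}c_k\Phi_{m,l_k}$. Only the levels $m$ with $S_m\neq\emptyset$ matter, and there are at most $N$ of them.

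\emph{Step 1 (one level).} By dilation, $\|F_m\|_2$ equals the $L^2$ norm of $\sum_{k\in S_m} c_k\Phi(\cdot-l_k)$, where the $l_k$ are distinct integers. I would bound this by a Schur test on the matrix $\big(\langle\Phi(\cdot-l),\Phi(\cdot-l')\rangle\big)_{l,l'}$. This reduces the single-level case to controlling $\sum_{l\in\ZZ}|\langle \Phi,\Phi(\cdot-l)\rangle|$.

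\emph{Step 2 (two levels).} For $m'\ge m$, the functions $\Phi_{m',l'}$ are $L^1$-small: $\|\Phi_{m',l'}\|_1=2^{-m'/2}\|\Phi\|_1$. Pairing them against level-$m$ functions and using Hölder should give a cross-level bound with geometric decay,
\begin{equation*}
|\langle F_m,F_{m'}\rangle|\lesssim 2^{-|m-m'|\gamma}\,\|\Phi\|_1^2\,\Big(\sum_{S_m}c_k^2\Big)^{1/2}\Big(\sum_{S_{m'}}c_k^2\Big)^{1/2}
\end{equation*}
for some $\gamma>0$. This decay is \emph{only} needed for $|m-m'|>\log_2 N$.

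\emph{Step 3 (near levels).} For $|m-m'|\le \log_2 N$, each level interacts with at most $2\log_2 N+1$ neighbours. Applying Cauchy--Schwarz across these near levels costs exactly one factor $\log N$ in $\|F\|_2^2$, which is the $\sqrt{\log N}$ in \e{x23}. The far levels are summed with the geometric decay from Step 2 and contribute $O(1)$.

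Summing the near and far contributions gives $\|F\|_2^2\lesssim \log N\cdot\|\Phi\|_1^2\sum_k c_k^2$.

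\emph{Main obstacle.} The difficulty is that the right-hand side involves only $\|\Phi\|_1$, while $\|\Phi_{m,l}\|_2=\|\Phi\|_2$. Taken literally, the case $N=2$ (where $\log N$ is a constant) would bound $\|\Phi\|_2$ by $\|\Phi\|_1$, which fails for general $\Phi$. So Steps 1 and 2 cannot rest on $L^1$ information alone.

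I expect the intended reading to supply an extra normalization. One option is that the constant also depends on $\|\Phi\|_2$. Another is that $\Phi$ satisfies a wavelet-type bound like \e{u43}, so that $\sum_l|\Phi(x-l)|\lesssim 1$. With such a bound, both the single-level Schur sum and the cross-level decay follow from $\int|\Phi_{m,l}|\,\sup_x\sum_{l'}|\Phi_{m',l'}(x)|$.

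Before carrying out the Schur estimates, I would first pin down which normalization makes Steps 1 and 2 hold. Step 3, the logarithmic counting of near levels, is robust and is the heart of the argument.
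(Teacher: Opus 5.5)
Your objection to the statement is correct, and it exposes a real error in the paper rather than a misreading. For $N=1$ the right-hand side of \eqref{x23} vanishes, and your $N=2$ test with $\Phi=\ZI_{[0,\varepsilon)}$ would give $\sqrt{\varepsilon}\lesssim\varepsilon$.

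The paper's proof breaks at ``applying dilation by $2^n$, we can also suppose that $n=0$''. If $\Phi$ is a step function on the grid $2^{-n}\ZZ$ and $\Psi=2^{-n/2}\Phi(2^{-n}\cdot)$, then $\Phi_{m,l}=\Psi_{m+n,2^nl}$, but $\|\Psi\|_1=2^{n/2}\|\Phi\|_1$. So a factor $2^{n/2}$ is lost.

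Dominate $|\Phi|$ pointwise by $\sum_j\sup_{[j-1,j)}|\Phi|\,\ZI_{[j-1,j)}$ and use $c_k>0$. Then the Jensen step together with \lem{L5} proves \eqref{x23} with $\|\Phi\|_1$ replaced by $K(\Phi)=\sum_{j\in\ZZ}\sup_{[j-1,j)}|\Phi|$. This is the normalization you were looking for:
\begin{itemize}
\item $K(\Phi)\ge\|\Phi\|_1$ and $K(\Phi)\ge\sup_x\sum_l|\Phi(x-l)|$;
\item it is finite for $\Phi=\xi$, which is the only case used in \cor{C2};
\item your Step 1 follows from $\sum_{p\in\ZZ}|\langle\Phi,\Phi(\cdot-p)\rangle|\le\|\Phi\|_1\sup_x\sum_p|\Phi(x-p)|\le K(\Phi)^2$.
\end{itemize}

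The genuine gap is Step 2. The displayed bound, with $\gamma>0$ and a constant independent of $N$, is false, even in the range $|m-m'|>\log_2N$. Take $\Phi=\ZI_{[0,1)}$, $F_m=\Phi_{m,0}$, and let $F_{m+d}$ be the sum of $M\le 2^d$ distinct $\Phi_{m+d,l}$ supported in $[0,2^{-m})$, all with coefficient $1$. Writing $A_m=(\sum_{k\in S_m}c_k^2)^{1/2}$, one gets $\langle F_m,F_{m+d}\rangle=2^{-d/2}\sqrt{M}\,A_mA_{m+d}$.
\begin{itemize}
\item For $M=2^d$ there is no decay at all.
\item For $M=N-1$ and $d=\lceil\log_2N\rceil+1$, the ratio stays above $1/3$ instead of being $O(N^{-\gamma})$.
\end{itemize}
If your bound held for all pairs it would give an $N$-free estimate. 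That is impossible: taking all dyadic $I\subset[0,1)$ with $|I|\ge 2^{-n}$ and $c_I=|I|^{1/2}$ shows the factor $\log N$ is sharp.

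What H\"older actually gives, for $m'=m+d\ge m$, is
\[
|\langle F_m,F_{m'}\rangle|\le\|F_m\|_\infty\|F_{m'}\|_1\le K(\Phi)^2\,2^{-d/2}\sqrt{|S_{m'}|}\,A_mA_{m'}\le K(\Phi)^2\,2^{-(d-\log_2N)/2}A_mA_{m'} .
\]
The factor $\sqrt{|S_{m'}|}$ comes from $\sum_{k\in S_{m'}}c_k\le\sqrt{|S_{m'}|}\,A_{m'}$. This cardinality loss is exactly what puts the threshold at $\log_2N$, so it, not the near-level count, is the heart of the argument. With it, Schur's test over $d>\log_2N$ contributes $O(K(\Phi)^2\sum_kc_k^2)$, and Step 3 contributes $(2\log_2N+1)K(\Phi)^2\sum_kc_k^2$. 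The proof then closes.

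Corrected this way, your route is genuinely different from the paper's and considerably shorter. The paper reduces, via Jensen over the step decomposition, to sums of $L^2$-normalized dyadic indicators. It then proves \lem{L5} by a stopping-time decomposition of the family into layers of overlap at most $2\lceil\log N\rceil$, with geometric almost-orthogonality between layers (\lem{L1}, \lem{L6}, \lem{L3}). Your level splitting replaces that machinery by a Schur test and handles non-step $\Phi$ directly. The paper's approach in exchange isolates the purely dyadic estimate of \lem{L5} as a standalone statement, but it needs the same $\ell^1(L^\infty)$-type control $K(\Phi)$ of $\Phi$ that yours does.
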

\begin{theorem}\label{T1}
	Let the system \e{y21} satisfy \e{u43} and suppose that an increasing positive sequence $w(n)$, $n\in \ZZ$, satisfies
	\begin{equation}\label{y62}
		\sum_{n\in \ZZ}w(n)^{-1}<\infty.
	\end{equation}
Then from the condition 
\begin{equation}\label{y65}
	\sum_{n,j\in \ZZ}a_{n,j}^2w(n)<\infty 
\end{equation}
it follows that
\begin{equation}
	\sum_{n,j\in \ZZ}|a_{n,j}||\phi_{n,j}(x)|<\infty \text{ a.e. on }\ZR.
\end{equation}
\end{theorem}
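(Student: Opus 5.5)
Since absolute convergence only concerns the moduli $|a_{n,j}||\phi_{n,j}(x)|$, we may replace $\phi_{n,j}$ by its majorant from \e{u43}. It therefore suffices to show that
\begin{equation}
	F(x)=\sum_{n,j\in\ZZ}|a_{n,j}|\,2^{n/2}\xi(2^nx-j)
\end{equation}
is finite a.e. We cannot hope for $F\in L^1(\ZR)$: the $L^1$ norm of each term is $\asymp 2^{-n/2}|a_{n,j}|$, and this blows up as $n\to-\infty$. We also cannot expect an $L^2$ bound without using the growth of $w$. The plan is instead to prove local finiteness. Fix a bounded interval $I=[-R,R]$. We will show that $\int_I F^{1/2}<\infty$, or more simply that $\int_I F_n<\infty$ for suitable level pieces $F_n$ after an application of Cauchy--Schwarz.

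Write $F=\sum_n F_n$ with $F_n(x)=\sum_j|a_{n,j}|2^{n/2}\xi(2^nx-j)$, and set $A_n=(\sum_j a_{n,j}^2)^{1/2}$. The key step is a per-level $L^2$ estimate:
\begin{equation}
	\|F_n\|_2\lesssim A_n .
\end{equation}
This holds because $\xi(2^nx-j)$, for fixed $n$, are translates at unit spacing after rescaling. The Gram matrix $\big(\int 2^n\xi(2^nx-j)\xi(2^nx-j')dx\big)_{j,j'}$ has entries bounded by $(1+|j-j'|)^{-1-\beta}$, since $\xi*\xi\lesssim\xi$ for $\beta>0$. These entries are summable in $j-j'$, so Schur's test gives the bound with constant independent of $n$. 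Applying Cauchy--Schwarz on $I$ then gives
\begin{equation}
	\int_I F_n\le |I|^{1/2}\|F_n\|_2\lesssim |I|^{1/2}A_n .
\end{equation}
Cauchy--Schwarz in $n$ yields
\begin{equation}
	\sum_n A_n\le\Big(\sum_n A_n^2w(n)\Big)^{1/2}\Big(\sum_n w(n)^{-1}\Big)^{1/2}<\infty,
\end{equation}
using \e{y65} and \e{y62}. Hence $\int_I F<\infty$, so $F<\infty$ a.e. on $I$. Letting $R\to\infty$ through the integers gives the result on $\ZR$.

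The main obstacle, though a mild one, is the Schur-test step, which rests on the convolution estimate $\xi*\xi\lesssim\xi$. We verify it by splitting the integral $\int\xi(y)\xi(x-y)dy$ into the regions $|y|\le|x|/2$ and $|y|>|x|/2$. On the first region $\xi(x-y)\lesssim\xi(x)$, and $\xi$ is integrable. On the second region we use $\xi(y)\lesssim\xi(x)$ together with $\int\xi<\infty$. Note also that the negative levels cause no trouble: the condition \e{y62} on the full range $n\in\ZZ$, including $n\to-\infty$ where $w$ is small, is used exactly at the Cauchy--Schwarz step in $n$.
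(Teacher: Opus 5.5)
Your proof is correct, and its skeleton matches the paper's proof via Lemma~\ref{L9}. Both pass to the majorant $2^{n/2}\xi(2^nx-j)$, bound the integral of the $n$th level over a bounded interval $I$ by $\lesssim |I|^{1/2}A_n$, and sum over $n$ by Cauchy--Schwarz against $w$.

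The only real difference is how the per-level bound is obtained. The paper applies Cauchy--Schwarz in $j$, writing $\sum_j|a_{n,j}|\int_I\xi_{n,j}\le A_n\bigl(\sum_j(\int_I\xi_{n,j})^2\bigr)^{1/2}$. It then proves \eqref{y44} by multiplying two crude bounds: $\sup_j\int_I\xi_{n,j}\le 2^{-n/2}\|\xi\|_1$, and $\sum_j\int_I\xi_{n,j}\lesssim 2^{n/2}|I|$, which follows from $\sup_u\sum_j\xi(u-j)<\infty$. You instead apply Cauchy--Schwarz in $x$ and prove the global almost-orthogonality bound $\|F_n\|_{L^2(\ZR)}\lesssim A_n$. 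Testing your bound against $\mathbf 1_I$ gives back \eqref{y44}, so yours is the stronger, $I$-independent statement, paid for with the Schur-test step.

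That step is lighter than you suggest. Schur's test only needs $\sum_k(\xi*\xi)(k)\le\|\xi\|_1\sup_u\sum_k\xi(u-k)<\infty$, so the pointwise estimate $\xi*\xi\lesssim\xi$ is not needed.

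One correction to your closing remark. For an increasing positive $w$ on all of $\ZZ$ one has $w(n)^{-1}\ge w(0)^{-1}$ for $n\le 0$, so \eqref{y62} can never hold, and "$w$ small as $n\to-\infty$" is incompatible with it. Neither your argument nor the paper's uses monotonicity, so their real content is the case of an arbitrary positive $w$ with $\sum_n w(n)^{-1}<\infty$.
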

\begin{theorem}\label{T4}
	Let \e{y21} be a normalized wavelet-type system on $[0,1]$ (possibly non-orthogonal). If an increasing sequence $w(n)$ satisfies
	\begin{equation}\label{y83}
		\sum_{n\ge 1}w(n)^{-1}=\infty.
	\end{equation}
	 then there exist coefficients $a_{n,j}$  such that
	\begin{equation}
		\sum_{(n,j)\in \ZZ^2(0,1)}a_{n,j}^2w(n)<\infty,
	\end{equation}
	while the series 
		\begin{equation}
			\sum_{(n,j)\in \ZZ^2(0,1)}a_{n,j}\phi_{n,j}(x)
		\end{equation}
almost everywhere diverges on $[0,1]$ after some rearrangement of its terms.
\end{theorem}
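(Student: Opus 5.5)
Our plan is to build the divergent series level by level, choosing on each dyadic level $n$ a block whose partial sums, after a suitable reordering, are large on a set of nearly full measure. Then we glue these blocks together with small weights chosen to keep $\sum a_{n,j}^2w(n)$ finite. Write $\delta_n=1/w(n)$. We first reduce \e{y83} to a convenient form. Since $w$ is increasing and $\sum_n w(n)^{-1}=\infty$, we can split $\mathbb N$ into consecutive finite blocks $I_1<I_2<\cdots$ of levels such that $\sum_{n\in I_s}w(n)^{-1}\in[1,2]$, up to a boundary term that tends to $0$ because $w\to\infty$; if $w$ is bounded the claim is easier.

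On level $n$ we choose coefficients $a_{n,j}=\varepsilon_s\,2^{-n/2}w(n)^{-1}$ with signs $\pm$. Here $\varepsilon_s$ is chosen so that $\sum_s\varepsilon_s^2\sum_{n\in I_s}w(n)^{-1}<\infty$; since the inner sum is at most $2$, it suffices to take $\varepsilon_s=1/s$. With this choice,
\[
\sum_{j}a_{n,j}^2w(n)=\varepsilon_s^2w(n)^{-1},
\]
so the weighted square sum converges. On the other hand, $\sum_{n\in I_s}\sum_j|a_{n,j}\phi_{n,j}(x)|$ is comparable to $\varepsilon_s\sum_{n\in I_s}w(n)^{-1}\,2^{-n/2}|\phi_{n,j(x)}(x)|$. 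The key point is to show that for most $x$ the terms $2^{-n/2}|\phi_{n,j}(x)|$ are bounded below for some $j$ near $2^nx$. This is where normalization and the lower bound are used. From $\|\phi_{n,j}\|_2=1$ together with \e{u43} and \e{u44}, $|\phi_{n,j}|\ge c2^{n/2}$ on an interval of length $\gtrsim 2^{-n}$ near $j2^{-n}$. Otherwise, the Hölder bound and the decay of $\xi$ would force the $L^2$ norm to be small. Hence, for every $x$ in a set of measure $\ge\gamma>0$ at each scale, we get $\max_j 2^{-n/2}|\phi_{n,j}(x)|\ge c$. The condition \e{u65} with $\int\phi_{n,j}=0$ also guarantees sign changes, which we exploit later.

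The rearrangement step works as follows. On each block $I_s$, for each point $x$ we want a partial sum of the rearranged block to reach size $\gtrsim\varepsilon_s\sum_{n\in I_s}w(n)^{-1}\cdot\gamma\approx\varepsilon_s$. Since $\sum\varepsilon_s=\infty$ for $\varepsilon_s=1/s$, these oscillations accumulate. We will use a Ul\cprime yanov-type ordering. Within the block, we first list those terms $a_{n,j}\phi_{n,j}$ that are positive on a chosen dyadic interval $\Delta$ and its localized neighborhood, and the negative ones afterward. We do this through the standard dyadic tree construction: process dyadic intervals at the top level of $I_s$ one by one, and inside each one order finer-level terms so that the terms positive at $x$ come first. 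The signs of the coefficients are chosen as $\operatorname{sign}\phi_{n,j}$ on the selected subintervals. This yields partial sums $\ge c\,\varepsilon_s$ on a subset $E_s$ of measure $\ge\gamma'$. Since the total contribution of each block to the full sum in $L^2$ is $\lesssim\varepsilon_s\to0$, the sequence of block sums converges in measure. So the partial sums cannot converge wherever $x\in E_s$ infinitely often while the $\varepsilon_s$-gains sum to infinity. More precisely, we should choose $\varepsilon_s$ decaying slowly, and group blocks so that within a superblock the gains add to $\ge1$. By independence-like behavior across widely separated scales (a Borel–Cantelli / lacunary argument, using that the $E_s$ are essentially unions of fine dyadic intervals equidistributed relative to coarser ones), almost every $x$ lies in infinitely many superblock sets. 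This gives a.e. divergence.

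The main obstacle is the reordering step. Without orthogonality, and with only approximate localization from the tail $\xi$, the \emph{off-diagonal} contributions from terms $\phi_{n,j'}$ with $j'$ far from $2^nx$ can cancel the selected gains. We plan to control these using \e{u43}. Summing the tails over $j'$ at a fixed level gives $\sum_{|j'-2^nx|>K}\xi(2^nx-j')\lesssim K^{-\beta}$. So by selecting only every $K$-th index $j$ on each level (a sparse sublattice) we lose only a constant factor in the measure $\gamma$, while the cross terms become $O(K^{-\beta})$ relative to the main term. Choosing $K$ large then closes the argument.
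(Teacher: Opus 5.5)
Your choice of coefficients is fine and is essentially the paper's: weights $\varepsilon_s2^{-n/2}w(n)^{-1}$ on blocks $I_s$ amount to the paper's $1/(2^{kl/2}\bar w(k)q_k)$ with $q\asymp s$. Your lower bound $|\phi_{n,j}|\gtrsim 2^{n/2}$ on a set of measure $\gtrsim 2^{-n}$ is \e{y81}.

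The genuine gap is the rearrangement, which is the heart of the theorem. A single permutation has to work for almost every $x$ simultaneously, so ``order the finer-level terms so that the terms positive at $x$ come first'' defines nothing. If $f>0>g$ at some $x$ and $g>0>f$ at some $y$, no ordering puts the positive terms first at both points.

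You also cannot avoid cross-level coherence. At a given $x$ each level contributes only $O(\varepsilon_s w(n)^{-1})$, so even the per-block gain $\varepsilon_s$ requires many levels to add up with a common sign in the partial sums. Moreover, an oscillation of size $c\varepsilon_s\to0$ is compatible with convergence. So, as you note, the gains of many blocks must add inside a superblock, and this again needs one ordering under which the main terms of all levels accumulate coherently at most points.

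Wavelet-type functions do not give such structure for free. Their zeros sit at arbitrary non-dyadic points, and a level-$(n+1)$ function has effective support comparable to the sign-change scale of level $n$, so it can straddle a zero of a level-$n$ function. Choosing signs of the coefficients does not help, because each $\phi_{n,j}$ has mean zero. Thinning translations to every $K$-th index only treats same-level overlap, not this cross-level sign compatibility.

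The paper supplies exactly the missing mechanism, in four steps.
\begin{enumerate}
\item It truncates, $\bar\phi_{n,j}=\phi_{n,j}\ZI_{\{|\phi_{n,j}|\ge\lambda 2^{n/2}\}}$. By \e{u44} the truncation vanishes on an interval of length $\gtrsim(\lambda/c)^{1/\alpha}2^{-n}$ around every sign change, and by \e{u43} it has support of length $\sim2^{\nu_0-n}$ (\lem{L4}).
\item It thins \emph{both} translations (indices $j2^{\nu_0}$) and scales (levels $kl$). The shifted grids $\tau_k+\ZD_{kl+\mu_0}$ are chosen so that the support partition at level $k+1$ coincides with a sign-preserving partition at level $k$. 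This makes $\{\bar\Psi_{k,j}\}$ a tree system (\lem{L8}, \lem{L15}), and \lem{L14} then gives one permutation with $\max|\text{partial sums}|\ge\frac12\sum|\bar\psi|$ at every $x$.
\item The remainders $\bar{\bar\psi}$ are not controlled by a level-by-level pointwise ratio like your $O(K^{-\beta})$. At a fixed $x$ the main term is present only on a fraction of the levels, while tails are present on all of them. Instead, \lem{L12} compares $L^1$ and $L^\infty$ norms on every interval of a coarse partition, giving $\sum|\bar\psi|>8\sum|\bar{\bar\psi}|$ on a fixed proportion of each such interval.
\item The a.e.\ conclusion then follows from the density argument of \lem{L13}, which is the precise version of your ``independence-like'' step.
\end{enumerate}
Without the tree structure, or some substitute showing that one fixed ordering makes the cross-level contributions add coherently on a set of positive proportion in every coarse interval, your outline does not yield divergence.
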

Note that a wavelet-type system $\{\phi_{k,j}\}$ on $[0,1]$ has a natural ordering. Namely, for $n=2^k+j$, $i=1,2,\ldots,2^k$, $k=0,1,2,\ldots,$ the $n$th element in this ordering is the function
\begin{equation}\label{y63}
	\phi_n(x)=\phi_{k,j}(x),\quad n\ge 2.
\end{equation}
This ordering in particular is used in the Haar and Franklin systems, as well as in general spline and wavelet systems on $[0,1]$. These systems also include the constant function $1$ as their first element.
\begin{corollary}\label{C1}
A nondecreasing sequence $w(n)$ is a UC-multiplier for a $L^2$-normalized wavelet-type system $\{\phi_n(x)\}_{n\ge 2}$ on $(0,1)$ (possibly non-orthogonal)  if and only if it satisfies 
Ul\cprime yanov's condition \e{a4}.
\end{corollary}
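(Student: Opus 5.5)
The plan is to derive \cor{C1} directly from \trm{T1} (sufficiency) and \trm{T4} (necessity), with a change of variables from the index $n$ of the natural ordering \e{y63} to the dyadic level $k$. Throughout, $\phi_n=\phi_{k,j}$ for $n=2^k+j$, so every $n$ at level $k$ satisfies $2^k< n\le 2^{k+1}$.

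\emph{Sufficiency.} Assume $w$ is nondecreasing and satisfies \e{a4}. Set $W(k)=w(2^k)$ for $k\ge0$ and extend it to $k<0$ by $W(k)=2^{-k}\cdot(\text{large})$; the negative levels do not occur on $[0,1]$, so the coefficients there are zero. Since $w$ is monotone, Cauchy condensation gives
\begin{equation*}
\sum_{k\ge0}\frac1{W(k)}=\sum_{k\ge 0}\frac{1}{w(2^k)}\lesssim \sum_{n}\frac1{nw(n)}<\infty,
\end{equation*}
because $\sum_{2^{k-1}<n\le 2^k}\frac{1}{nw(n)}\ge \frac{1}{2w(2^k)}$. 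Hence $W$ satisfies \e{y62}; if strict increase is required, replace $W(k)$ by $W(k)(1+2^{-k})$ or a similar modification. Now take coefficients with $\sum_n a_n^2w(n)<\infty$. Monotonicity gives $w(n)\ge w(2^k)=W(k)$ at level $k$, so $\sum_{k,j}a_{k,j}^2W(k)<\infty$, which is \e{y65}. A wavelet-type system on $[0,1]$ satisfies \e{u43}, so \trm{T1} gives $\sum|a_{k,j}||\phi_{k,j}(x)|<\infty$ a.e. Absolute convergence a.e. implies convergence a.e. under every rearrangement, so $w$ is a UC-multiplier. The first element $\phi_1\equiv1$ is a single term and plays no role.

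\emph{Necessity.} Assume \e{a4} fails, and set $W(k)=w(2^{k+1})$. Condensation in the other direction gives
\begin{equation*}
\sum_{k}\frac1{W(k)}\gtrsim\sum_{2^{k+1}<n\le 2^{k+2}}\frac{1}{nw(n)},
\end{equation*}
so $\sum_k W(k)^{-1}=\infty$, which is \e{y83}. If strict increase is needed, perturb $W$ slightly upward while keeping the series divergent; for example, use $W(k)(1+\sum_{i\le k}2^{-i})$, which stays bounded by $2W(k)$. \trm{T4} then produces coefficients with $\sum a_{k,j}^2W(k)<\infty$ whose series diverges a.e. after some rearrangement. At level $k$ we have $n\le 2^{k+1}$, hence $w(n)\le W(k)$ and $\sum_n a_n^2w(n)<\infty$. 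So $w$ is not a UC-multiplier.

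The main obstacle is bookkeeping rather than analysis. The definitions require $w(n)\nearrow\infty$, and \trm{T1} and \trm{T4} ask for increasing sequences, so $W$ has to be adjusted as above without destroying convergence or divergence. In the sufficiency case one must also check that \trm{T1}, stated on $\ZR$ with index set $\ZZ^2$, applies to the system on $[0,1]$ by setting the missing coefficients to zero.
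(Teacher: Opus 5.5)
Your proposal is correct and follows the paper's route exactly: sufficiency comes from Theorem~\ref{T1} and necessity from Theorem~\ref{T4}, and you supply the Cauchy-condensation transfer between the natural index $n=2^k+j$ and the level $k$ (using $w(2^k)$ and $w(2^{k+1})$ respectively), which the paper leaves implicit. Your extension of $W$ to negative levels is not increasing, but this is harmless: no increasing positive sequence on $\mathbb{Z}$ can satisfy \eqref{y62} at all, and the proof of Lemma~\ref{L9} uses only the summability of $1/W$, with the coefficients at negative levels being zero.
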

\begin{corollary}\label{C2}
	Let \e{y21} be an orthonormal wavelet-type system, satisfying conditions \e{u65}-\e{u44}. Then the sequence $\log n$ is an RC-multiplier for this system. Moreover, any  increasing sequence $w(n)=o(\log n)$ is not RC-multiplier for this system. 
\end{corollary}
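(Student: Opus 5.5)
We need to prove Corollary C2. There are two parts: the positive statement that $\log n$ is an RC-multiplier, and the sharpness statement.

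\emph{Positive part.} An orthonormal wavelet-type system with the natural ordering \e{y63} is exactly the setting of \trm{OT4}. That theorem says $\log n$ is an SRC-multiplier for every orthonormal wavelet-type system. As observed right after the definition of SRC-multipliers, every subsystem $\{\phi_{n_k}\}$ is a system of non-overlapping polynomials \e{y22} with $G_k=\{n_k\}$, so an SRC-multiplier is automatically an RC-multiplier. This gives the first claim immediately. If the system is indexed over $\ZZ^2(\ZR)$ rather than $[0,1]$, I read ``$\log n$'' with respect to whatever enumeration the system carries; the argument of \cite{KaKa} covers arbitrary rearrangements, so the enumeration is irrelevant.

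\emph{Sharpness.} I argue by contradiction, using Pole\v{s}\v{c}uk's \trm{Pol} together with \trm{OT4}'s counterpart, \trm{T4}. Suppose an increasing $w(n)=o(\log n)$ were an RC-multiplier. Choose $R(x)=x\log^{1+\varepsilon}x$, or more carefully a nondecreasing $R$ with $\int^\infty R^{-1}<\infty$ but growing so slowly that
\[
\sum_n \frac{1}{nR(w(n))}=\infty .
\]
Then \trm{Pol} says $R(w(n))$ is a UC-multiplier. On the other hand, \cor{C1} (the necessity direction, which comes from \trm{T4}) says that every UC-multiplier must satisfy \e{a4}. This is a contradiction.

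The hard step is constructing $R$. Here is how I would do it. Write $v(n)=\log n/w(n)$, so $v(n)\to\infty$. In the variable $t=\log n$ we have $\sum 1/(nR(w(n)))\approx\int dt/R(w(e^t))$. We need $R(t/v)$ to grow like at most $t$ on long stretches while $\int dx/R(x)$ still converges. I would build $R$ piecewise:
\begin{itemize}
\item choose blocks $[x_k,x_{k+1}]$ on which $R(x)=2^k x$;
\item the contribution of a block to $\int R^{-1}$ is about $2^{-k}\log(x_{k+1}/x_k)$, which we keep $\le k^{-2}\cdot 2^{k}2^{-k}$ by taking $x_{k+1}=x_k^{\,C}$ with suitable control;
\item choose the $x_k$ sparse enough that $v\ge 4^k$ on the region where $w(n)\in[x_k,x_{k+1}]$.
\end{itemize}
With this choice, $R(w(n))\le 2^k\log n/4^k$, so $\sum 1/(nR(w(n)))$ diverges over that range. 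If the bookkeeping is awkward, the alternative is to bypass \trm{Pol} entirely. In that case I would rerun the block construction from the proof of \trm{T4}, using dyadic levels $n\in[2^{k},2^{k+1})$ whose Haar-like packets give divergence of the maximal rearranged partial sums of order $\sqrt{\log N}$ per block, and choose the weights using $w(n)=o(\log n)$. I expect this bookkeeping to be the main obstacle.
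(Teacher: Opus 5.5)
Your proposal has the right overall structure. The convergence half takes a different route from the paper, and the sharpness half follows the paper but contains a recipe that fails as written.

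\textbf{Convergence half.} You read this off from \trm{OT4}: an SRC-multiplier is an RC-multiplier (take $G_k=\{n_k\}$). That is legitimate, and the paper itself remarks that the earlier Kamont--Karagulyan result already implies this half.

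The paper instead proves it afresh from its new estimate \trm{T3}. By \e{u43}, $|\phi_{m,l}|\lesssim \xi_{m,l}$ with $\xi_{m,l}(x)=2^{m/2}\xi(2^mx-l)$. Hence the block maximal function $\delta_k=\max_{2^k<m\le 2^{k+1}}|\sum_{j=2^k}^m a_j\phi_j|$ is dominated by $\sum_{2^k<j\le 2^{k+1}}|a_j|\xi_{m_j,l_j}$. Then \trm{T3} with $\Phi=\xi$ gives $\|\delta_k\|_2^2\lesssim k\sum_{2^k<j\le 2^{k+1}}a_j^2$. Summing in $k$ yields $\delta_k\to0$ a.e., and orthogonality gives a.e.\ convergence of $S_{2^k}$.

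Your route delivers the stronger SRC statement, but it rests entirely on the good-lambda machinery behind \trm{OT4}. The paper's route is elementary. Its key block estimate uses only the size condition \e{u43`}, and orthogonality enters only through $S_{2^k}$.

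\textbf{Sharpness half.} Your argument is the paper's: Pole\v{s}\v{c}uk's \trm{Pol} plus the necessity part of \cor{C1}, i.e.\ \trm{T4}. The paper merely asserts that a suitable $R$ exists. Note that no fixed $R$ works for all $w$: $x\log^{1+\varepsilon}x$ already fails for $w(n)=\log n/\log\log\log n$.

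Your block recipe for $R$ does not work as written.
\begin{itemize}
\item With contiguous blocks and $R(x)=2^kx$ on $[x_k,x_{k+1})$, convergence of $\int R^{-1}=\sum_k 2^{-k}\log(x_{k+1}/x_k)$ forces $\log x_K\le \log x_1+2^{K}\sum_k 2^{-k}\log(x_{k+1}/x_k)=O(2^K)$. In particular $x_{k+1}=x_k^C$ is admissible only for $C<2$.
\item Demanding $v\ge 4^k$ wherever $w(n)\ge x_k$ ties $x_k$ to how slowly $v=\log n/w(n)$ grows. For $w(n)=\log n/\log\log n$ it forces $x_k\gtrsim e^{4^k}4^{-k}$.
\end{itemize}
These two requirements are incompatible.

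A construction that works is as follows. Let $g(k)=w(2^k)$ and $N(y)=\card\{k\ge1:\,g(k)<y\}$. Since $g(k)=o(k)$, we have $N(y)/y\to\infty$. Choose $y_1<y_2<\cdots$ with $N(y_{j+1})\ge\max\{2N(y_j),\,2^{j+1}y_{j+1}\}$, which is possible because $N(y)/y\to\infty$. Set $R=2^jy_{j+1}$ on $[y_j,y_{j+1})$ and $R=2y_2$ on $[0,y_1)$. Then $R$ is nondecreasing, $\int_0^\infty R^{-1}\le y_1/(2y_2)+\sum_j 2^{-j}<\infty$, and
\[
\sum_k\frac{1}{R(g(k))}\ge\sum_j\frac{N(y_{j+1})-N(y_j)}{2^jy_{j+1}}\ge\sum_j 1=\infty .
\]
By Cauchy condensation, $u=R\circ w$ violates \e{a4}. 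This contradicts the necessity half of \cor{C1} once \trm{Pol} makes $u$ a UC-multiplier.

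With this step filled in the proof is complete, and your fallback of rerunning the block construction of \trm{T4} is unnecessary.
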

\begin{remark}
	Only parts of these corollaries concern series that diverge on sets of positive measure. In fact, we prove almost everywhere divergence of these series.
\end{remark}
\begin{remark}
	Note that in all the results except \cor{C2}, the orthogonality condition is not required. 
\end{remark}
\begin{remark}
		\trm{OT5} implies the convergence part of the \cor{C2} as well as its analogue results for several wavelet-type systems proved by G.~G.~Gevorkyan in \cite{Gev,Gev1,Gev2}. In fact, the proof of the convergence parts of \cor{C1} as well as its analogues of \cite{Gev,Gev1,Gev2} is simple use of the Cauchy-Schwartz inequality to a sum of absolute values of series (see for example \cite{KaSa}, ch. 3, Theorem 17), while for the counterexample parts of those theorems is a subtle extension of the method provided by Nikishin and Ul\cprime yanov \cite{NiUl}. However, the proof of \trm{OT5} uses quite different non-trivial argument, where the basic role plays a good-lambda inequality of \cite{CWW}.   
\end{remark}
\begin{remark}
	In Theorems \ref{OT4} and \ref{OT5}, the growth rates of the admissible RC and UC multipliers are optimal within the class of all orthonormal systems of the form \eqref{y23}, since they are already optimal in the particular case of the Haar system (see Theorems \ref{OT1} and \ref{OT2}). However, this optimality does not persist for every specific orthonormal system of the form \eqref{y23}, even when the system is derived from the Haar system. An example of such a system is the Rademacher system.
\end{remark}

\section{Proof of \trm{T3}}
We denote by 
\begin{equation}
	\Delta_j^m=\left[\frac{j-1}{2^m},\frac{j}{2^m}\right),\quad j,m\in \ZZ,
\end{equation}
the dyadic intervals on the real line $\ZR$ and set 
\begin{equation}\label{y78}
	\ZD_m=\{\Delta_j^m:\, j\in \ZZ\},\quad \ZD=\bigcup_{m\in \ZZ}\ZD_m.
\end{equation}
If $\ZU$ is a finite collection of dyadic intervals, then the set $E=\cup_{\Delta\in \ZU}\Delta$ can be uniquely represented as a union of pairwise disjoint maximal dyadic intervals. Note that such maximal intervals need not belong to the original collection $\ZU$. We denote by $\dmax(\ZU)$ the family of maximal intervals obtained in this way. Clearly, 
	\begin{equation*}
		\bigcup_{\Delta\in \ZU}\Delta=\bigcup_{\Delta\in \dmax(\ZU)}\Delta.
	\end{equation*}
	We also denote by $\dmin(\ZU)$ the collection of intervals in $\ZU$ (we call those the minimal intervals of $\ZU$) that do not contain any other interval from $\ZU$. Let $\ZU$ and $\ZV$ be two finite collections of dyadic intervals. Then we write $\ZU\Subset \ZV$, if every interval $I\in \ZU$ is contained in some interval $J\in \dmin(\ZV)$. 
	Given dyadic interval $I$, we denote by $\pr(I)$ its parent interval, i.e.,  the unique dyadic interval of twice the length that contains $I$.
	\begin{lemma}\label{L0}
		Let  $\ZU$ be a finite collection of dyadic intervals contained in another dyadic interval $I$. If
		\begin{equation}\label{y24}
			\sum_{\Delta\in \ZU}\ZI_{\Delta}(x)\ge l\text{ for all } x\in I,
		\end{equation}
		then the cardinality of $\ZU$ satisfies the bound $\card(\ZU)\ge 2^{l}-1$.
	\end{lemma}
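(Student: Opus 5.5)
I will argue by induction on $l$, proving the slightly more general statement for every dyadic interval $I$ and every finite collection $\ZU$ of dyadic intervals contained in $I$ that satisfies \e{y24}. For $l\le 0$ the bound $2^l-1\le 0$ is trivial. For $l=1$ the collection must cover $I$, so it is nonempty and $\card(\ZU)\ge 1=2^1-1$. Since the inequality in \e{y24} is between integer-valued functions, only integer $l$ matter; for non-integer $l$ one replaces $l$ by $\lceil l\rceil$.

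For the induction step, assume the claim holds for $l-1$ and let $\ZU$ satisfy \e{y24} with $l\ge 1$. The plan splits into two cases according to whether $I$ itself belongs to $\ZU$.

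\emph{Case 1: $I\in\ZU$.} Remove $I$ and set $\ZU'=\ZU\setminus\{I\}$. Then $\sum_{\Delta\in\ZU'}\ZI_\Delta\ge l-1$ on $I$. Every interval of $\ZU'$ is a proper dyadic subinterval of $I$, so it lies in exactly one of the two children $I_1,I_2$ of $I$. Let $\ZU_i=\{\Delta\in\ZU':\Delta\subset I_i\}$. Then $\ZU_i$ satisfies \e{y24} on $I_i$ with $l-1$. By the induction hypothesis each $\ZU_i$ has at least $2^{l-1}-1$ elements. Adding $I$ back gives
\[
\card(\ZU)\ge 1+2(2^{l-1}-1)=2^l-1 .
\]

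\emph{Case 2: $I\notin\ZU$.} Now every element of $\ZU$ lies in $I_1$ or in $I_2$. Each subfamily $\ZU_i$ satisfies \e{y24} on $I_i$ with the same $l$. The cleanest route is to make the statement itself support strong induction: induct on $l$ and, for fixed $l$, on the depth $\max_{\Delta\in\ZU}\log_2(|I|/|\Delta|)$, which is finite because $\ZU$ is finite. In Case 2 the depth of each $\ZU_i$ relative to $I_i$ is strictly smaller, so the inner induction gives $\card(\ZU_i)\ge 2^l-1$. Hence
\[
\card(\ZU)\ge 2(2^l-1)\ge 2^l-1 .
\]
Case 1 passes from level $l$ to level $l-1$ on intervals of smaller depth, so the double induction (primarily on $l+\text{depth}$, or lexicographically) closes. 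The base case is depth $0$: then $\ZU\subset\{I\}$, and the covering multiplicity is at most $1$, so necessarily $l\le 1$, which was handled above.

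The only delicate point is organizing the induction so that Case 2 is legitimate. I handle this by inducting on the depth, which is finite because $\ZU$ is finite. Everything else is the binary-tree count $1+2+\dots+2^{l-1}=2^l-1$.
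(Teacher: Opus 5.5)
Your proof is correct, but it takes a different route from the paper. The paper argues extremally. It asserts, without further justification, that under the covering hypothesis the cardinality is minimized by the family of all dyadic $J\subset I$ with $|J|\ge 2^{1-l}|I|$, i.e.\ the first $l$ generations of the dyadic tree below $I$. It then counts $1+2+\dots+2^{l-1}=2^l-1$.

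You instead prove the bound directly by recursing to the two children of $I$. If $I\in\ZU$, you drop to level $l-1$ on each child. If $I\notin\ZU$, you keep level $l$ on each child and obtain the stronger bound $2(2^l-1)$. Your route supplies exactly the minimality claim the paper takes for granted. Tracking equality in your recursion even shows that the paper's family is the unique minimizer: Case 2 is strict for $l\ge 1$, and Case 1 is sharp only when both halves are sharp. The paper's version gains only brevity and a picture of the extremal configuration.

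Two small remarks. First, both of your cases lower the depth by one, so a single induction on depth (for all $l$ at once) already closes the argument; the lexicographic bookkeeping is unnecessary. Second, Case 1 uses that $\ZU$ is a set, so that $I$ occurs only once and every element of $\ZU\setminus\{I\}$ is a proper subinterval. This is a genuine implicit hypothesis of the lemma, which fails for multisets (e.g.\ $l\ge 2$ copies of $I$).
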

	\begin{proof}
		Observe that under the condition \e{y24}, the cardinality $\card(\ZU)$ is minimized when $\ZU$ consists of all dyadic intervals $J\subset I$ satisfying 
		\begin{equation}
			|J|\ge 2^{-l+1}\cdot |I|.
		\end{equation}
		In this case, the number of such dyadic intervals is exactly $2^{l}-1$.
	\end{proof}
	\begin{lemma}\label{L1}
			Let $\ZU$ be a finite collection of dyadic intervals in $\ZR$ with $\card(\ZU)= N\le 2^n$. Then there is a decomposition
\begin{equation}\label{y26}
				\ZU=\ZU'\cup \ZU'',\quad \ZU'\cap \ZU''=\varnothing,\quad  \ZU''\Subset \ZU',
\end{equation}
such that
		\begin{align}
			&S(x)=\sum_{\Delta \in \ZU'}\ZI_{\Delta}(x)\le 2n,\quad x\in \ZR,\label{x2}\\
			&\bigcup_{\Delta\in \ZU''}\Delta\subset \{x:\,S(x)=2n\},\label{x5}\\
			&|\{x:\,S(x)=2n\}|\le 2^{1-n} |\{S(x)\neq 0\}|.\label{x3}
			\end{align}
	\end{lemma}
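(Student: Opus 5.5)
We take $\ZU'$ to be the intervals of $\ZU$ of depth at most $2n$ in the inclusion order, and $\ZU''$ the rest. For $\Delta\in\ZU$ put
\begin{equation*}
	d(\Delta)=\card\{J\in\ZU:\ J\supseteq\Delta\}\ge 1,
\end{equation*}
and set $\ZU'=\{\Delta\in\ZU:\ d(\Delta)\le 2n\}$ and $\ZU''=\ZU\setminus\ZU'$. The point is that for a fixed $x$ the intervals of $\ZU$ containing $x$ form a chain $J_1\supsetneq J_2\supsetneq\cdots\supsetneq J_r$, and $d(J_i)=i$. So the members of $\ZU'$ containing $x$ are exactly $J_1,\ldots,J_{\min(r,2n)}$, which gives \e{x2} at once. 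Moreover $S(x)=2n$ exactly when $r\ge 2n$.

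For $\ZU''\Subset\ZU'$, take $\Delta\in\ZU''$. Its chain of ancestors in $\ZU$ has at least $2n+1$ members, so there is a unique ancestor $J\supsetneq\Delta$ with $d(J)=2n$. Every interval of $\ZU$ strictly inside $J$ has depth $>2n$, so $J$ contains no other member of $\ZU'$. Hence $J\in\dmin(\ZU')$ and $\Delta\subset J$. The same observation gives \e{x5}: every $x\in\Delta\in\ZU''$ has chain length $r\ge 2n+1$, so $S(x)=2n$. It also shows that $E:=\{S=2n\}$ is the disjoint union of the depth-$2n$ intervals. All of this is bookkeeping.

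The real content is \e{x3}, where $N\le 2^n$ must be used. Write $m_k=|\{S\ge k\}|$, which is the measure of the disjoint union of the depth-$k$ intervals, and let $n_k$ be the number of depth-$k$ intervals. Fix a depth-$k$ interval $J$ and let $C$ be the union of the depth-$(k+1)$ intervals inside $J$; each of these is a proper dyadic subinterval of $J$, so it has length $\le|J|/2$. We then use a dichotomy. Either $|C|\le|J|/2$, or $J$ contains at least two depth-$(k+1)$ intervals. In the second case $J$ has at least two descendants at level $k+1$, so level by level either the measure halves or the count doubles, as in \lem{L0}, where full multiplicity $l$ forces $2^l-1$ intervals.

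The step I expect to be the main obstacle is making this quantitative. I would try to prove by induction on $k$ the inequality
\begin{equation*}
	m_{k}\le \frac{2\,n_{k}}{2^{k}}\,m_1 ,
\end{equation*}
or a variant with a suitable weight. Applied at $k=2n$ with $n_{2n}\le N\le 2^n$, it gives $m_{2n}\le 2^{1-n}m_1$, and $m_1=|\{S\neq0\}|$, which is \e{x3}. If this inductive inequality does not close directly, the fallback is to localize to each maximal interval $I\in\dmax(\ZU')$. On the portion of $I$ where $E$ has relative density above $2^{1-n}$, I would run a stopping-time argument that produces, inside $I$, a sub-collection of $\ZU'$ covering some dyadic interval with multiplicity about $n$. \lem{L0} would then force more than $2^n$ intervals, contradicting $N\le 2^n$.
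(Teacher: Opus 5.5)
\paragraph{The gap.} Your depth construction is the paper's stopping time made explicit. Your verification of \e{y26}, \e{x2} and \e{x5} is correct, including the facts that $\{S=2n\}$ is the disjoint union of the depth-$2n$ intervals and that $m_1=|\{S\neq 0\}|$. The gap is at \e{x3}, the step you yourself call the real content. You state the right target, $m_k\le 2n_k2^{-k}m_1$, but you do not prove it, and the route you suggest does not close as formulated.

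An induction on the aggregated quantities $m_k,n_k$ cannot propagate. To control the depth-$(k+1)$ intervals inside a depth-$k$ interval $J$ you need a bound on $|J|$ itself. The hypothesis $m_k\le 2n_k2^{-k}m_1$ bounds only the total length, so by itself it does not exclude a single depth-$k$ interval of length up to $2n_k2^{-k}m_1$. The ``measure halves or count doubles'' dichotomy is a heuristic, not an estimate. Your fallback is only a sketch. With multiplicity $n$, \lem{L0} yields just $2^n-1$ intervals, which does not contradict $N\le 2^n$.

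\paragraph{The fix.} The missing step is the per-interval version of your inequality, and it takes one line. You already had the ingredient: a proper dyadic subinterval has at most half the length. If $d(J)=k$, the $k$ members of $\ZU$ containing $J$ form a strictly decreasing chain of dyadic intervals $J_1\supsetneq\cdots\supsetneq J_k=J$. Hence $|J|\le 2^{1-k}|J_1|\le 2^{1-k}|\{S\neq 0\}|$, because $J_1\in\ZU'$. Take $k=2n$ and sum over the at most $N\le 2^n$ pairwise disjoint depth-$2n$ intervals:
\begin{equation*}
|\{S=2n\}|\le 2^n\cdot 2^{1-2n}\,|\{S\neq 0\}|=2^{1-n}\,|\{S\neq 0\}|,
\end{equation*}
which is \e{x3}.

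\paragraph{Comparison with the paper.} With this fix your argument is correct and simpler than the paper's. The paper decomposes $\{S=2n\}$ into maximal dyadic components $J_k$ and splits the multiplicity as $2n=l_k+(2n-l_k)$. The $l_k$ intervals of $\ZU'$ strictly containing $J_k$ give $|J_k|\le 2^{-l_k}|\{S\neq0\}|$. The intervals inside $J_k$ number at least $2^{2n-l_k-1}$ by \lem{L0}, and the paper then sums using $\sum_k 2^{2n-l_k-1}\le N$. Both routes end at $|\{S=2n\}|\le 2^{1-2n}N\,|\{S\neq 0\}|$, but yours does not need \lem{L0}.
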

	\begin{proof}
	Ordering the intervals in $\ZU$ by decreasing length and applying a stopping-time argument on level $2n$, we obtain collections of intervals $\ZU'$ and $\ZU''$ satisfying
	\begin{equation}
		\ZU=\ZU'\cup \ZU'',\quad \ZU'\cap \ZU''=\varnothing,
	\end{equation}
	as well as conditions \e{x2} and \e{x5}. One can verify that the set $\{x:\,S(x)=2n\}$ is a union of certain minimal intervals of $\ZU'$. Moreover, by \e{x5} and the stopping-time construction every element in $\ZU''$ is contained in one of these minimal intervals. This implies that $\ZU''\Subset \ZU'$ and it remains to prove the bound \e{x3}. Let 
	\begin{equation}\label{x18}
		\{x:\, S(x)=2n\}=\bigcup_kJ_k
	\end{equation}
be a decomposition into maximal dyadic intervals. For each $J_k$ the sum
\begin{equation}
	\sum_{\Delta\in \ZU',\, \Delta\not\subset J_k}\ZI_\Delta(x)
\end{equation}
is constant on the parent interval $\pr(J_k)$. Let $l_k$ be the value of this constant. Then it is clear that 
	\begin{equation}\label{x17}
		|J_k|\le2^{-l_k} \cdot |\{x:\,S(x)\neq 0\}|\text{ and }l_k<2n.
	\end{equation}
	On the other hand for every $x\in J_k$ we have
		\begin{align*}
			\sum_{\Delta\in \ZU',\, \Delta\subset J_k}\ZI_\Delta(x)&=S(x)-\sum_{\Delta\in \ZU',\, \Delta\not\subset J_k}\ZI_\Delta(x)\\
			&=2n-l_k.
		\end{align*}
	Thus, applying \lem{L0}, we conclude that the number of intervals from $\ZU'$ contained in $J_k$ is not less than
\begin{equation*}
			2^{2n-l_k}-1\ge 2^{2n-l_k-1}. 
\end{equation*}
		Thus we obtain that
		\begin{equation*}
			\sum_{k}2^{2n-l_k-1}\le N\le 2^n.
		\end{equation*}
	Therefore, using also \e{x18} and \e{x17}, we get
		\begin{align*}
			|\{x:\, S(x)=2n\}|&= \sum_{k}|J_k|\\
			&\le |\{x:\,S(x)\neq 0\}|\sum_{k} 2^{-l_k} \\
			&\le 2^{1-n}\cdot |\{x:\,S(x)\neq 0\}|,
		\end{align*}
		completing the proof of lemma.
	\end{proof}
	
	\begin{lemma}\label{L6}
	Let $\ZU$ be a finite collection of dyadic intervals with $\card(\ZU)= N\le 2^n$. Then there exists a decomposition
	\begin{equation}\label{x10}
		\ZU=\bigcup_{k=1}^l\ZU_k,
	\end{equation}
	into pairwise disjoint collections $\ZU_k$, such that 
	\begin{align}
		&\ZU_{k}\Subset \ZU_{k-1},\quad k\ge 2,\label{x4}\\
		&S_k(x)=\sum_{\Delta\in \ZU_k}\ZI_{\Delta}(x)\le 2n, \quad k\ge 1, \label{x1}\\
		&\bigcup_{\Delta\in \ZU_{k}}\Delta\subset \{x:\,S_{k-1}(x)=2n\}, \quad k\ge 2,\label{x6}\\
		&|\{x:\,S_{k}(x)=2n\}\cap J|\le 2^{1-n}\cdot |J|,\quad k\ge 2.\label{x7}
	\end{align}
	where the latter holds for every minimal interval $J\in \dmin(\ZU_{k-1})$.
	\end{lemma}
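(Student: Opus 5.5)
The plan is to obtain the decomposition by iterating \lem{L1}. First apply \lem{L1} to $\ZU$ itself. This gives $\ZU=\ZU'\cup\ZU''$, and we set $\ZU_1=\ZU'$. Then \e{x1} holds for $k=1$ by \e{x2}. We then apply \lem{L1} again to the remainder $\ZU''$, whose cardinality is still at most $N\le 2^n$. This yields $\ZU''=(\ZU'')'\cup(\ZU'')''$, and we set $\ZU_2=(\ZU'')'$. Inductively, once $\ZU_1,\dots,\ZU_{k-1}$ have been built and a remainder $\mathcal R_{k-1}$ (the ``$''$'' part of the last application) is left, we apply \lem{L1} to $\mathcal R_{k-1}$. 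The ``$'$'' part becomes $\ZU_k$ and the ``$''$'' part becomes $\mathcal R_k$.

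The cardinalities strictly decrease whenever the remainder is nonempty: a nonempty collection has a nonempty ``$'$'' part, because its maximal elements always enter $\ZU'$ in the stopping-time construction. Hence the process stops after finitely many steps $l$, which gives \e{x10} with pairwise disjoint pieces.

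Next we check the structural properties. Property \e{x1} is \e{x2} at each step. For \e{x6} and \e{x4} we use \e{x5} and $\ZU''\Subset\ZU'$ from the step that produced $\ZU_{k-1}$. Every interval of $\mathcal R_{k-1}$ lies in $\{S_{k-1}=2n\}$ and inside some $J\in\dmin(\ZU_{k-1})$. Since $\ZU_k\subset\mathcal R_{k-1}$, this gives both $\ZU_k\Subset\ZU_{k-1}$ and $\bigcup_{\Delta\in\ZU_k}\Delta\subset\{S_{k-1}=2n\}$.

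The main point is the localized estimate \e{x7}. \lem{L1} only gives the global bound $|\{S_k=2n\}|\le 2^{1-n}|\{S_k\neq0\}|$. To localize it, fix $J\in\dmin(\ZU_{k-1})$ and apply \lem{L1} separately to the subfamily $\mathcal R_{k-1}(J)=\{\Delta\in\mathcal R_{k-1}:\Delta\subset J\}$. The minimal intervals of $\ZU_{k-1}$ are pairwise disjoint, and every element of $\mathcal R_{k-1}$ lies in exactly one of them. The stopping-time construction is local: whether an interval is selected depends only on the intervals of the family containing it. Therefore the decomposition of $\mathcal R_{k-1}$ restricted to $J$ coincides with the decomposition of $\mathcal R_{k-1}(J)$.

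Equivalently, we could define the $k$-th step as the union over $J$ of these separate applications; this is the cleanest way to avoid checking the locality claim. Then \e{x3} applied to $\mathcal R_{k-1}(J)$, whose cardinality is at most $2^n$, gives
\begin{equation*}
|\{S_k=2n\}\cap J|\le 2^{1-n}|\{S_k\neq0\}\cap J|\le 2^{1-n}|J|,
\end{equation*}
which is \e{x7}.

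The remaining checks with the per-$J$ definition are these. The union over different $J$ still satisfies \e{x1}, because the $J$ are disjoint. The relation $\ZU_k\Subset\ZU_{k-1}$ is unaffected, since each piece lies inside its own $J$. The stopping-level sums on different $J$ do not interact, again because the $J$ are disjoint. I expect the only delicate step to be this bookkeeping, namely confirming that the minimal intervals and the level sets computed per $J$ agree with those of the union.
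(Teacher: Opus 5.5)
Your proposal is correct and is essentially the paper's proof: iterate Lemma~\ref{L1} on the leftover family (the paper's $\mathcal G_m$, your $\mathcal R_{k-1}$) until it is empty, reading off \eqref{x1}, \eqref{x4} and \eqref{x6} from \eqref{x2}, \eqref{x5} and $\ZU''\Subset\ZU'$. The paper simply asserts \eqref{x7}, which does not follow directly from the global bound \eqref{x3}; your separate application of Lemma~\ref{L1} to each $\mathcal R_{k-1}(J)$, $J\in\dmin(\ZU_{k-1})$, supplies this missing localization.
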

	\begin{proof}
	Using induction, we construct pairwise disjoint collections 
\begin{equation}\label{y28}
		\ZU_1,\ZU_2,\ldots,\ZU_m, \ZG_{m}, 
\end{equation}
which satisfy conditions \e{x4},\e{x1},\e{x6} and \e{x7} for all $k\le m$, and such that
	\begin{align*}
		&\ZU=\ZU_1\cup\ZU_2\cup \ldots\cup \ZU_m\cup \ZG_{m},\\
		&\ZG_{m}\Subset \ZU_m, \\
		&\bigcup_{\Delta\in \ZG_{m}}\Delta\subset \{x:\, S_{m}(x)=2n\}.
	\end{align*}
	We begin by applying \lem{L1} to the initial collection $\ZU$. This yields two subcollections $\ZU',\ZU''\subset \ZU$, satisfying the conclusions of the lemma. We set $\ZU_1=\ZU'$ define the intermediate collection $\ZG_1=\ZU''$. This establishes the base step of the induction. Assume now that the collections \eqref{y28} have been constructed for some $m$. Applying \lem{L1} to the collection $\ZG_m$, we obtain a decomposition $\ZG_m=\ZU'\cup\ZU''$ satisfying the properties required in \lem{L1}. We then set $\ZU_{m+1}=\ZU'$ and $\ZG_{m+1}=\ZU''$ thereby completing the induction step. The procedure terminates when $\ZG_{l+1}=\varnothing $. At this stage, we obtain collections $\ZU_1,\ZU_2,\ldots,\ZU_l$ satisfying conditions \e{x4}-\e{x7} .
		\end{proof}
	\begin{lemma}\label{L3}
		Let measurable sets $E_1\supset E_2\supset \ldots \supset E_N$ from $\ZR$ and  functions $f_k\in L^2(\ZR)$, $k=1,2,\ldots, N$, satisfy
		\begin{align}
			&\supp(f_k)\subset E_{k},\quad k=1,2,\ldots,N,\label{x11}\\
			& \left\|f_k\right\|_{L^2(E_{m+1})}^2\le \frac{1}{2}\cdot \|f_k\|_{L^2(E_{m})}^2,\quad k\le m<N.\label{x12}
		\end{align}
		Then 
		\begin{equation}
			\left\|\sum_{k=1}^Nf_k\right\|_2^2\le 3\sum_{k=1}^n\left\|f_k\right\|_2^{2}.
		\end{equation}
	\end{lemma}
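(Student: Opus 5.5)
The plan is a layer decomposition along the nested sets, followed by a weighted Cauchy--Schwarz inequality and an Abel summation. Put $E_{N+1}=\varnothing$ and $D_m=E_m\setminus E_{m+1}$ for $m=1,\ldots,N$. By \e{x11}, only $f_1,\ldots,f_m$ can be nonzero on $D_m$. Hence
\begin{equation*}
\Big\|\sum_{k=1}^N f_k\Big\|_2^2=\sum_{m=1}^N\int_{D_m}\Big|\sum_{k\le m}f_k\Big|^2 .
\end{equation*}
Write $b_{k,m}=\|f_k\|^2_{L^2(D_m)}$ and $t_{k,m}=\|f_k\|^2_{L^2(E_m)}=\sum_{m'\ge m}b_{k,m'}$. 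Iterating \e{x12} gives $t_{k,m}\le 2^{-(m-k)}\|f_k\|_2^2$ for $m\ge k$.

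The second step is to fix $x\in(1/2,1)$ and apply Cauchy--Schwarz pointwise on $D_m$ with weights $x^{-(m-k)}$:
\begin{equation*}
\Big|\sum_{k\le m}f_k\Big|^2\le \frac{1}{1-x}\sum_{k\le m}x^{-(m-k)}|f_k|^2 .
\end{equation*}
Integrating over $D_m$ and summing over $m$ gives
\begin{equation*}
\Big\|\sum_{k=1}^N f_k\Big\|_2^2\le\frac{1}{1-x}\sum_k\sum_{m\ge k}x^{-(m-k)}b_{k,m}.
\end{equation*}
Since $b_{k,m}=t_{k,m}-t_{k,m+1}$, Abel summation rewrites the inner sum as $t_{k,k}+\sum_{d\ge1}(x^{-d}-x^{-d+1})t_{k,k+d}$. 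All coefficients here are nonnegative, so the bound on $t_{k,m}$ applies termwise. The inner sum is then at most $\|f_k\|_2^2\bigl(1+(1-x)/(2x-1)\bigr)$, which yields the constant $\frac1{1-x}+\frac1{2x-1}$. Optimizing at $x=1/\sqrt2$ gives $(1+\sqrt2)^2\approx 5.83$.

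The main obstacle is that I do not see how to lower this to the stated constant $3$, and I believe that as literally worded it cannot be reached. The following test family is the first thing I would check before trying to improve the argument.
\begin{itemize}
\item Take a common positive function $h_m$ on each $D_m$ with $\|h_m\|_2=1$.
\item For $m\ge k$, let $f_k=2^{-(m-k+1)/2}h_m$ on $D_m$, with the tail mass absorbed into $E_N$.
\end{itemize}
Then $t_{k,m}=2^{-(m-k)}$, so \e{x12} holds with equality and $\|f_k\|_2=1$. On the other hand,
\begin{equation*}
\Big\|\sum_{k=1}^N f_k\Big\|_2^2=\frac12\sum_m\Big(\sum_{k\le m}2^{-(m-k)/2}\Big)^2\sim\frac{(1-2^{-1/2})^{-2}}{2}N=(1+\sqrt2)^2N .
\end{equation*}
This exceeds $3N$ for large $N$, and it also shows that $(1+\sqrt2)^2$ is sharp. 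So my plan is to prove the inequality with the admissible absolute constant $(1+\sqrt2)^2$ in place of $3$ by the argument above. I would then check that only the existence of an absolute constant is used later, in the proof of \trm{T3} via \lem{L6}, where \e{x7} supplies \e{x12}.

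If the intended hypothesis is stronger, the same computation adapts directly. For instance, with decay factor $\frac14$ in \e{x12}, optimizing the same expression gives the constant $(1+\sqrt{3})^2/2=2+\sqrt3<4$, which is still larger than $3$. A decay factor of about $\frac18$ or smaller would put the constant below $3$. Finally, the upper limit $\sum_{k=1}^n$ in the conclusion should be read as $\sum_{k=1}^N$.
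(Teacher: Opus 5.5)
Your main conclusion is right, and it points to an error in the paper's proof. The paper argues pairwise. For $k<m$ it writes $\bigl|\int f_kf_m\bigr|=\bigl|\int_{E_m}f_kf_m\bigr|\le\|f_k\|_{L^2(E_m)}\|f_m\|_2$, then asserts $\|f_k\|_{L^2(E_m)}\le 2^{k-m}\|f_k\|_2$, and sums $1+2\sum_{d\ge1}2^{-d}=3$. But iterating \eqref{x12} only gives $\|f_k\|_{L^2(E_m)}\le 2^{-(m-k)/2}\|f_k\|_2$. With that factor the same Schur-type summation yields $1+2\sum_{d\ge1}2^{-d/2}=3+2\sqrt2=(1+\sqrt2)^2$, which is exactly your constant.

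Your test family checks out: $t_{k,m}=2^{-(m-k)}$ with equality in \eqref{x12}, $\|f_k\|_2=1$, and $\|\sum_k f_k\|_2^2/N\to(1+\sqrt2)^2$. So the lemma with constant $3$ is false as worded.

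Your upper-bound route differs from the paper's. You work pointwise on the layers $D_m=E_m\setminus E_{m+1}$ with a weighted Cauchy--Schwarz inequality and then Abel-sum against the tail masses $t_{k,m}$. The corrected paper argument only bounds off-diagonal inner products. Both use the same information, and for a decay factor $q$ in \eqref{x12} both give $\frac{1+\sqrt q}{1-\sqrt q}$ (yours at $x=\sqrt q$). The paper's argument is shorter. Yours makes sharpness transparent, because your extremal family asymptotically saturates both the Cauchy--Schwarz step and the tail bounds. You are also right that only an absolute constant matters downstream: in Lemma~\ref{L5} the $6$ in \eqref{x14} just becomes $2(3+2\sqrt2)$. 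Reading the upper limit as $\sum_{k=1}^N$ is clearly what is intended.

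One side remark of yours is wrong. With decay factor $\frac14$ in \eqref{x12}, your own bound becomes $\frac1{1-x}+\frac1{4x-1}$. This is minimized at $x=\frac12$ with value exactly $3$, and your test family also gives $\frac{1+1/2}{1-1/2}=3$. The value $2+\sqrt3$ is what one gets for $q=\frac13$. So $3$ is precisely the sharp constant under the unsquared hypothesis $\|f_k\|_{L^2(E_{m+1})}\le\frac12\|f_k\|_{L^2(E_m)}$. That is exactly what the paper's inequality $\|f_k\|_{L^2(E_m)}\le 2^{k-m}\|f_k\|_2$ silently uses. This is the natural repair of the statement, and nothing like $\frac18$ is needed.
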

	\begin{proof}
		If $1\le k<m\le N$, then using \e{x12}, we obtain
		\begin{align}
			\left|\int_0^1f_kf_m\right|=&\left|\int_{E_m}f_k f_m\right|\\
			&\le \|f_k\|_{L^2(E_{m})}\| f_m\|_{L^2}\le 2^{k-m} \|f_k\|_2\|f_m\|_2\\
			&\le 2^{k-m-1}(\|f_k\|_2^2+\|f_m\|_2^2).
		\end{align}
		Therefore,
		\begin{align*}
			\left\|\sum_{k=1}^Nf_k\right\|_2^2&\le \sum_{k=1}^N\|f_k\|_2 ^2+\sum_{m\neq k}\left|\int_0^1f_kf_m\right|\\
			&\le \sum_{k=1}^N\|f_k\|_2 ^2+\sum_{m\neq k}2^{|k-m|-1}(\|f_k\|_2^2+\|f_m\|_2^2)\\
			&\le 3\sum_{k=1}^N\|f_k\|_2 ^2.
		\end{align*}
	\end{proof}
We are now ready to prove the main lemma used in the proof of \trm{T3}. This lemma corresponds to the special case of \trm{T3} for the Haar system. 
\begin{lemma}[main]\label{L5}
	Let $\ZU=\{\Delta_{m_1},\Delta_{m_2},\ldots,\Delta_{m_N}\}$ be a collection of dyadic intervals of cardinality $N\ge 2$. Then for any choice of positive numbers $c_j>0,\, j=1,2,\ldots,N$, the following inequality holds:
	\begin{align}
		\bigg\|\sum_{j=1}^Nc_j\ZI_{\Delta_{m_j}}\bigg\|_2\lesssim \sqrt{\log
			N}\left(\sum_{j=1}^Nc_j^2\cdot |\Delta_{m_j}|\right)^{1/2}.\label{x16}
	\end{align}
\end{lemma}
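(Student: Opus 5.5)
Choose $n$ with $2^{n-1}<N\le 2^n$, so $n\lesssim\log N$ (as $N\ge2$). Apply \lem{L6} to $\ZU$ to get the decomposition \e{x10} into layers $\ZU_1,\ldots,\ZU_l$ satisfying \e{x4}--\e{x7}. For each layer set
\begin{equation*}
	f_k=\sum_{\Delta_{m_j}\in\ZU_k}c_j\ZI_{\Delta_{m_j}},\qquad E_k=\bigcup_{\Delta\in\ZU_k}\Delta .
\end{equation*}
The proof then splits into two estimates. The first controls each layer: by \e{x1} at most $2n$ intervals of $\ZU_k$ overlap at any point. Cauchy--Schwarz pointwise gives
\begin{equation*}
	f_k(x)^2\le S_k(x)\sum_{\Delta_{m_j}\in\ZU_k}c_j^2\ZI_{\Delta_{m_j}}(x)\le 2n\sum_{\Delta_{m_j}\in\ZU_k}c_j^2\ZI_{\Delta_{m_j}}(x),
\end{equation*}
hence $\|f_k\|_2^2\le 2n\sum_{\Delta_{m_j}\in\ZU_k}c_j^2|\Delta_{m_j}|$. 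This is where the $\log N$ factor enters.

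The second estimate controls the interaction between layers through \lem{L3}. By \e{x6} and \e{x4} the supports are nested, $E_1\supset E_2\supset\cdots$, and $\supp f_k\subset E_k$, so \e{x11} holds. We must verify the decay condition \e{x12}: for $k\le m$, $\|f_k\|^2_{L^2(E_{m+1})}\le\frac12\|f_k\|^2_{L^2(E_m)}$.

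Here we use the structure. Every interval of $\ZU_k$ either contains a minimal interval $J\in\dmin(\ZU_m)$ or is disjoint from it, because dyadic intervals are nested or disjoint and $J$ lies inside a minimal interval of $\ZU_k$. It follows that $f_k$ is constant on each $J\in\dmin(\ZU_m)$. Moreover $E_m$ is covered by such $J$'s, up to the reduction to maximal/minimal intervals, which we must check. By \e{x6} and \e{x7},
\begin{equation*}
	|E_{m+1}\cap J|\le|\{S_m=2n\}\cap J|\le 2^{1-n}|J|.
\end{equation*}
So on each such $J$, $\int_{E_{m+1}\cap J}f_k^2\le 2^{1-n}\int_J f_k^2$, and summing over $J$ gives \e{x12} with factor $2^{1-n}\le 1/2$ once $n\ge2$; the case $N\le 2$ is handled trivially.

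The main obstacle is exactly this covering claim: that $f_k$ is constant on the cells carrying $E_{m+1}$ and that $E_m$ is partitioned by minimal intervals of $\ZU_m$. If $E_m$ is not literally a union of minimal intervals, we instead apply \e{x7} on the intervals of $\ZU_m$ containing the pieces of $E_{m+1}$, which are contained in $\{S_m=2n\}$. That set is a union of minimal intervals of $\ZU_m$, as noted in the proof of \lem{L1}. We then compare with $\int_{E_m}f_k^2$, which is at least the integral over those minimal intervals.

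Given \e{x12}, \lem{L3} yields
\begin{equation*}
	\Big\|\sum_j c_j\ZI_{\Delta_{m_j}}\Big\|_2^2=\Big\|\sum_k f_k\Big\|_2^2\le3\sum_k\|f_k\|_2^2\le 6n\sum_j c_j^2|\Delta_{m_j}|,
\end{equation*}
which is \e{x16}.
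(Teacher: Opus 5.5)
There is a genuine gap in the decay step. You claim $|E_{m+1}\cap J|\le|\{S_m=2n\}\cap J|\le 2^{1-n}|J|$ for $J\in\dmin(\ZU_m)$. However, \eqref{x7} at level $m$ only controls $\{S_m=2n\}$ inside minimal intervals of the \emph{previous} layer $\ZU_{m-1}$. Inside a minimal interval $J$ of $\ZU_m$ itself, $S_m$ is constant, because every interval of $\ZU_m$ either contains $J$ or misses it. So $\{S_m=2n\}\cap J$ is either empty or all of $J$; you note this yourself when you say $\{S_m=2n\}$ is a union of minimal intervals of $\ZU_m$. The bound therefore fails whenever $E_{m+1}\neq\varnothing$, and your fallback paragraph applies \eqref{x7} on the same wrong intervals.

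This is not only a write-up issue: the one-step decay \eqref{x12} is false in general for $k=m$. Take a chain $I_1\supsetneq\cdots\supsetneq I_{2n}$ of dyadic intervals together with the two dyadic halves of $I_{2n}$ (e.g.\ $n=3$, $N=8$). The decomposition $\ZU_1=\{I_1,\dots,I_{2n}\}$, $\ZU_2=$ the two halves (which is what the stopping time produces) satisfies \eqref{x4}--\eqref{x7}, and $E_2=I_{2n}$. If the coefficient of $I_{2n}$ dominates the others, then $\|f_1\|_{L^2(E_2)}^2/\|f_1\|_2^2$ is as close to $1$ as you like. So Lemma~\ref{L3} cannot be applied to the full sequence $f_1,f_2,\dots$.

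What does hold is a two-step decay, and this is how the paper proceeds. Fix $k\le m$.
\begin{itemize}
\item The chain $\ZU_{m+2}\Subset\ZU_{m+1}\Subset\ZU_m$ places $E_{m+2}$ inside the pairwise disjoint intervals $J\in\dmin(\ZU_m)$, on each of which $f_k$ is constant.
\item By \eqref{x6}, $E_{m+2}\subset\{S_{m+1}=2n\}$.
\item Now \eqref{x7} at level $m+1$ applies to exactly these $J$, giving $\|f_k\|_{L^2(E_{m+2})}^2\le 2^{1-n}\sum_J a_J^2|J|\le 2^{1-n}\|f_k\|_{L^2(E_m)}^2$.
\end{itemize}
Hence the odd-indexed layers and the even-indexed layers each satisfy the hypotheses of Lemma~\ref{L3} (using $n\ge 2$). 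Applying it to both and using $\|g+h\|_2^2\le 2\|g\|_2^2+2\|h\|_2^2$ gives $\|\sum_k f_k\|_2^2\le 6\sum_k\|f_k\|_2^2$.

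The rest of your proposal is correct and matches the paper: the per-layer Cauchy--Schwarz bound with the factor $2n$, the nesting of the $E_k$, and the constancy of $f_k$ on minimal intervals of later layers via transitivity of $\Subset$.
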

\begin{proof}
Without loss of generality we can suppose that $n=\lceil\log N\rceil\ge 2$.  Applying \lem{L6} we obtain families $\ZU_k$, satisfying the conclusions of lemma. Then we consider the functions
\begin{equation}\label{x13}
	f_k(x)=\sum_{\Delta\in \ZU_k}c_\Delta\ZI_{\Delta}(x),\quad k=1,2,\ldots,l,
\end{equation}
and the sets 
\begin{equation}\label{y27}
	E_k=\bigcup_{\Delta\in \ZU_k} \Delta,\quad k=1,2,\ldots,l. 
\end{equation}
Observe that for every $k$ the function $f_k$ is constant on each interval $J\in \dmin(\ZU_{k})$. That is $f_k(x)=a_J$ as $x\in J\in \dmin(\ZU_{k})$. We also have $\ZU_{k+2}\Subset \ZU_k$ and so
\begin{equation*}
	E_{k+2}\subset \bigcup_{J\in \dmin(\ZU_k)}J.
\end{equation*}
  Thus, using also properties \e{x6} and \e{x7}, we obtain 
\begin{align}
	\left\|f_k\right\|_{L^2(E_{k+2})}^2&=\sum_{J\in \dmin(\ZU_{k})}|a_J|^2\left|J\cap E_{k+2}\right|\\
	&\le\sum_{J\in \dmin(\ZU_k)}|a_J|^2\left|J\cap \{S_{k+1}(x)=2n\}\right|\qquad\qquad  (\text{see }\e{x6})\\
	&\le 2^{1-n}\sum_{J\in \dmin(\ZU_k)}|a_J|^2\left|J\right|\le 2^{1-n}\left\|f_k\right\|_{L^2(E_{k})}^2\qquad (\text{see }\e{x7})\\
	&\le \frac{1}{2}\cdot \left\|f_k\right\|_{L^2(E_{k})}^2. \label{x73}
\end{align}
We now split the sequences \eqref{x13} and \eqref{y27} into two subsequences consisting of even and odd indices. It follows from \eqref{x73} that each of these subsequences satisfies property \eqref{x12} in Lemma~\ref{L3}. Clearly, they also satisfy \eqref{x11}. Therefore, by applying Lemma~\ref{L3}, we obtain
\begin{align}
		\bigg\|\sum_{\Delta\in \ZU}c_\Delta\ZI_{\Delta}\bigg\|_2^2&=\bigg\|\sum_k\sum_{\Delta\in \ZU_k}c_\Delta\ZI_{\Delta}(x)\bigg\|_2=\left\|\sum_{k=1}^Nf_k\right\|_2^2\\
		&\le 2\left\|\sum_{k\text{ is even }}f_k\right\|_2^2+2\left\|\sum_{k\text{ is odd}}f_k\right\|_2^2\\
		&\le 6\sum_{k=1}^N\left\|f_k\right\|_2^{2}.\label{x14}
\end{align} 
From \e{x1}, it follows that for every $x\in \ZR$ the sum in \e{x13} has at least $2n$ non-zero terms. Thus, using Cauchy-Schwarz inequality inequality,  we then obtain
\begin{align}
	\|f_k\|_2^2&=\left\|\sum_{\Delta\in \ZU_k}c_\Delta\ZI_{\Delta}(x)\right\|_2^2\\
	&\le 2n\sum_{\Delta\in \ZU_k}|c_\Delta|^2|\Delta|\lesssim\log N\sum_{\Delta\in \ZU_k} |c_\Delta|^2|\Delta|.\label{x15}
\end{align}
Combining \e{x14} and \e{x15}, we obtain the desired bound \e{x16}.
\end{proof}

\begin{proof}[Proof of \trm{T3}]
Approximating the function $\Phi$ by step functions, one can reduce the theorem to the case
\begin{equation}\label{x24}
	\Phi(x)=\sum_{j\in \ZZ}\xi_j \ZI_{[(j-1)/2^n,j/2^n)}(x), \text{ where }\|\Phi\|_1=2^{-n}\cdot \sum_{j\in \ZZ}|\xi_j|=1.
\end{equation}
Moreover, applying dilation by $2^n$, we can also suppose that $n=0$ in \e{x24}. Thus $\sum_j|\xi_j|=1$ and we obtain
\begin{align}
	\left(\sum_{k=1}^Nc_k\Phi_{m_k,l_k}(x)\right)^2&=\left(\sum_{k=1}^Nc_k\cdot 2^{m_k/2}\sum_{j\in \ZZ}\xi_j \ZI_{[j-1,j)}(2^{m_k}x-l_k)\right)^2\\
	&=\left(\sum_{j\in \ZZ}\xi_j \sum_{k=1}^Nc_k\cdot 2^{m_k/2}\ZI_{[j-1,j)}(2^{m_k}x-l_k)\right)^2\\
	&=\left(\sum_{j\in \ZZ}\xi_j \sum_{k=1}^Nc_k\cdot 2^{m_k/2}\ZI_{\Delta_{j+l_k}^{m_k}}(x)\right)^2\\
	&\le \sum_{j\in \ZZ}|\xi_j|\left(\sum_{k=1}^Nc_k\cdot 2^{m_k/2}\ZI_{\Delta_{j+l_k}^{m_k}}(x)\right)^2.\label{x21}
\end{align}
On the other hand from \lem{L5} it follows for any fixed $j\in \ZZ$ that
\begin{equation}\label{x22}
\left\|\sum_{k=1}^Nc_k\cdot 2^{m_k/2}\ZI_{\Delta_{j+l_k}^{m_k}}\right\|_2^2\lesssim \log N \cdot \sum_{k=1}^Nc_k^2.
\end{equation}
From \e{x21} and \e{x22} we obtain \e{x23}.
\end{proof}

\section{Tree system of functions}
Recall the following definition from \cite{Kar4}.
\begin{definition}
	A finite or countable family of functions on $\ZR$ is called a tree system if it admits an ordering $\{f_n:\, n=1,2,\ldots\}$ (which we call a canonical ordering) such that for any $n$ there exist pairwise disjoint sets $U_n^+,U_n^-\subset \ZR$ with the following properties: for any positive integers $n$ and $k<n$ one of the following three conditions holds:
	\begin{equation}\label{y15}
		U_n\subset U_k^+,\quad U_n\subset U_k^-,\quad U_n\cap U_k=\varnothing\quad (\text{where }U_n=U_n^+\cup U_n^-).
	\end{equation}
Moreover,
\begin{align}
	\{x:\,f_n(x)> 0\}\subset U_n^+,\quad \{x:\,f_n(x)< 0\}\subset U_n^-.\label{y17}
\end{align}
\end{definition}
\begin{definition}
	A partition of $\ZR$ is a collection of intervals $\ZF=\{I_j=[a_{j-1},a_j):\, j\in \ZZ\}$, where 
	\begin{equation}
		a_k\to +\infty,\quad a_{-k}\to -\infty \text{ as }k\to +\infty.
	\end{equation}
	For two partitions $\ZF$ and $\ZC$ we write $\ZF\prec \ZC$ if every intervals of $\ZF$ is a union of some intervals of $\ZC$.
\end{definition}
\begin{definition}
	An interval $I=[a,b)$ is said to be sign-preserving for a function $f\in C(\ZR)$, if one of these conditions
	\begin{equation*}
		f(x)\ge 0,\quad f(x)\le 0
	\end{equation*}
	holds for every $x\in I$.  A partition $\ZF$ is called sign-preserving for a function $f\in C(\ZR)$, is each interval from $\ZF$ is sign-preserving for $f$. 
\end{definition}
 
\begin{lemma}\label{L8}
	Let $\ZF_n=\{J_{n,j}:\, j\in \ZZ\}$ and $\ZC_n=\{I_{n,j}:\, j\in \ZZ\}$, $n\in \ZZ$, be sequences of partitions such that $\ZF_n\prec \ZC_n\prec\ZF_{n+1}$, and suppose that a function system 
	\begin{equation}\label{y14}
		\{f_{n,j},\quad n,j\in \ZZ\}
	\end{equation}
	satisfies the conditions
	\begin{align}
		&\supp (f_{n,j})\subset J_{n,j},\label{y30}\\
		&\ZC_n \text{ is a sign-preserving partition for all functions }f_{n,j},\, j\in \ZZ. 
	\end{align}
	Then the system \e{y14} is a tree system.
\end{lemma}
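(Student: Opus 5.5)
We take $U_{n,j}=J_{n,j}$ and split it along the intervals of $\ZC_n$ according to the sign of $f_{n,j}$. The canonical ordering will put coarser levels before finer ones, so every later support lies inside a single sign-preserving interval of every earlier level.

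\emph{Choice of the sets.} Fix $(n,j)$. Since $\ZF_n\prec\ZC_n$, the interval $J_{n,j}$ is a disjoint union of intervals $I\in\ZC_n$. Each such $I$ is sign-preserving for $f_{n,j}$, so either $f_{n,j}\ge 0$ on $I$ or $f_{n,j}\le 0$ on $I$. Define
\begin{equation*}
U_{n,j}^-=\bigcup\{I\in\ZC_n:\ I\subset J_{n,j},\ f_{n,j}(x)<0\ \text{for some}\ x\in I\},\qquad U_{n,j}^+=J_{n,j}\setminus U_{n,j}^-.
\end{equation*}
Then $U_{n,j}^+$ and $U_{n,j}^-$ are disjoint unions of $\ZC_n$-intervals with $U_{n,j}^+\cup U_{n,j}^-=J_{n,j}$. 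By \e{y30} and sign preservation, $\{f_{n,j}>0\}\subset U_{n,j}^+$ and $\{f_{n,j}<0\}\subset U_{n,j}^-$. Intervals on which $f_{n,j}\equiv0$ go into $U_{n,j}^+$, which is harmless. This gives \e{y17}.

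\emph{Nesting of partitions.} The relation $\prec$ is transitive: if every interval of $\ZF$ is a union of intervals of $\ZC$, and every interval of $\ZC$ is a union of intervals of $\ZC'$, then the same holds for $\ZF$ and $\ZC'$. From the chain $\ZF_m\prec\ZC_m\prec\ZF_{m+1}\prec\cdots\prec\ZF_n$ we get $\ZC_m\prec\ZF_n$ whenever $m<n$. Since both are partitions into half-open intervals, each interval $J_{n,i}\in\ZF_n$ lies inside exactly one interval $I\in\ZC_m$.

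\emph{Verification of \e{y15}.} Order the system by increasing level $n$, taking the $j$'s within a level in any order, so that $f_{m,j}$ precedes $f_{n,i}$ whenever $m<n$. Condition \e{y15} is a statement about pairs: a later element must have its $U$-set contained in $U^+$ of an earlier element, contained in its $U^-$, or disjoint from its $U$-set. It therefore suffices to check the following two cases for every earlier element $f_{m,j}$ and later element $f_{n,i}$.
\begin{itemize}
\item Same level, $m=n$ and $j\neq i$: the sets $U_{n,i}=J_{n,i}$ and $U_{n,j}=J_{n,j}$ are distinct intervals of the partition $\ZF_n$, hence disjoint.
\item Coarser earlier, $m<n$: $J_{n,i}$ lies in a single $I\in\ZC_m$. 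Either $I\subset J_{m,j}$, in which case $I$ is one of the intervals making up $U_{m,j}^+$ or $U_{m,j}^-$, so $U_{n,i}\subset U_{m,j}^+$ or $U_{n,i}\subset U_{m,j}^-$. Or $I\cap J_{m,j}=\varnothing$, in which case $U_{n,i}\cap U_{m,j}=\varnothing$.
\end{itemize}
Thus \e{y15} holds for every pair, and together with \e{y17} the system \e{y14} is a tree system.

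\emph{Main obstacle: indexing by $\ZZ$.} The definition asks for an enumeration by $1,2,\ldots$. When the levels $n$ are bounded below, the ordering above is such an enumeration: list level by level, with each level finite or enumerated diagonally across levels so that every element still has only finitely many predecessors. For levels running over all of $\ZZ$, no enumeration by $1,2,\ldots$ can put every coarser level before every finer one, since an element would then have infinitely many predecessors. I will not prove the statement in that sense. Instead I will take as canonical ordering the total order ``coarser level first'' and argue that the tree property is still meaningful, because \e{y15} only compares pairs of elements. I will also note the consequence used later: every subfamily indexed by finitely many $(n,j)$, or by levels bounded below, is a tree system in the literal sense of the definition, with the inherited order, and every application of the lemma in the paper is to such finite sums.
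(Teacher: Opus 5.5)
Your proof is correct and is essentially the paper's argument: order coarser levels first, build $U^\pm_{n,j}$ from whole $\ZC_n$-intervals according to the sign of $f_{n,j}$, separate same-level pairs by \eqref{y30}, and handle cross-level pairs via $\ZC_m\prec\ZF_n$. Two points of comparison in your favour: you allow the disjoint alternative in the cross-level case, which the paper's text skips, and your indexing caveat is justified, since the paper's $\ZZ$-indexed list $\ldots,g_{-1},g_0,g_1,\ldots$ is tacitly a general total order too.

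One small correction concerns your side remark on levels bounded below. If each level is infinite, the inherited coarser-first order is not an enumeration by $1,2,\ldots$, and a diagonal enumeration is no longer coarser-first. You should therefore choose the diagonal enumeration so that $f_{m,j}$ precedes every $f_{n,i}$ with $J_{n,i}\subset J_{m,j}$. This is possible because each element has only finitely many such ancestors. With that choice, any finer element that comes first is disjoint from the later coarser one, so \eqref{y15} still holds.
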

\begin{proof}
Consider an ordering 
	\begin{equation}
		\ldots, g_{-n},g_{-n+1}, \ldots,g_{0},g_{1},\ldots,g_m,\ldots
	\end{equation}
of the system \e{y14} with a property that a function $f_{n,j}$ precedes $f_{m,i}$ whenever $n<m$. Suppose that $g_k=f_{n,j}$. Since $\ZC_n$ is a sign-preserving partition for $f_{n,j}$, we define $U_k^+$ to be the union of those intervals $I\in \ZC_n$ on which $f_{n,j}\ge 0$ and is not identically zero, and $U_k^-$ to be the union of those interval, where $f_{n,j}\le 0$ and is not identically zero. With this definition the conditions in \e{y17} are satisfied. Let $g_k=f_{s,j}$, $g_n=f_{t,i}$ and $k<n$. If $s=t$, then by \e{y30} we have $U_n\cap U_k=\varnothing$, thus the third condition in \e{y15} is satisfied. Otherwise we have $s<t$ and clearly one of two first conditions in \e{y15} holds. Indeed, by the definition both $U_k^+$ and $U_k^-$ are union of some elements of $\ZC_n$, while $U_n$ is contained in an interval in $\ZF_n$. Since $\ZF_n\prec \ZC_k$ we will either have $U_n\subset U_k^+$ or $U_n\subset U_k^-$. Therefore the system \e{y14} is a tree system.
\end{proof}
The following lemma is a version of a lemma from \cite{Kar4}.
\begin{lemma}\label{L14}
	If 
\begin{equation}\label{y20}
		\{f_k:\, k=1,2,\ldots,N\}
\end{equation}
	is a finite tree system on $\ZR$, then there exists a permutation $\sigma$ (a one to one mapping from the set $\{1,2,\ldots,N\}$ onto itself) such that 
	\begin{equation*}
		\max_{1\le p<q\le N}\left|\sum_{k=p}^qf_{\sigma(k)}(x)\right|\ge\frac{1}{2} \sum_{k=1}^N|f_k(x)|\text{ for all }x\in \ZR.
	\end{equation*}
\end{lemma}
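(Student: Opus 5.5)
We build $\sigma$ recursively from the tree structure, so that for each $x$ the functions that do not vanish at $x$ and have a common sign appear as one consecutive block, up to interspersed terms that vanish at $x$. Fix the canonical ordering $f_1,\dots,f_N$ with sets $U_k^\pm$. Call $f_k$ a descendant of $f_j$ if $j<k$ and $U_k\subset U_j$. The relation \e{y15} makes the sets $U_k$ a laminar family: two of them are either disjoint or nested, and when nested the later one lies inside $U_j^+$ or inside $U_j^-$. So the indices form a forest. The children of $f_j$ split into a ``plus'' group, with $U_k\subset U_j^+$, and a ``minus'' group, with $U_k\subset U_j^-$.

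We define an ordering $L(j)$ of the subtree rooted at $j$ by recursion on the depth, starting from the deepest level. Let $P(j)$ be the concatenation of $L(k)$ over the plus-children $k$, and $M(j)$ the concatenation of $L(k)$ over the minus-children. Then set
\begin{equation*}
L(j)=\big(P(j),\ f_j,\ M(j)\big).
\end{equation*}
The global ordering $\sigma$ is the concatenation of $L(r)$ over the roots $r$. The essential feature is that $f_j$ sits between its plus-subtree and its minus-subtree.

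The key claim is proved by induction on the tree. For every $x\in U_j$, the block $L(j)$ contains a consecutive stretch whose sum $s(x)$ satisfies
\begin{equation*}
|s(x)|\ge \tfrac12\sum_{k\in \mathrm{subtree}(j)}|f_k(x)|.
\end{equation*}
More precisely, we aim for the following: one can choose a consecutive stretch consisting of a prefix or suffix of $L(j)$ together with $f_j$, on which all terms at $x$ have one sign, and whose absolute sum is at least half the total.

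Now take $x\in U_j^+$, so $f_j(x)\ge 0$. Only the plus-children $k$ can have $f_k\neq0$ at $x$. At most one of them has $x\in U_k$, because the plus-children have pairwise disjoint $U_k$. The terms of this child's subtree are nonzero at $x$ only along a single chain of descendants. So the sum $\sum|f_k(x)|$ over the subtree of $j$ reduces to a chain $j=k_0,k_1,\dots,k_r$ with $x\in U_{k_i}$. Each $f_{k_i}(x)$ has a sign determined by whether $x\in U^+_{k_i}$ or $x\in U^-_{k_i}$.

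The main obstacle is the mixed signs along this chain. With the construction above, the terms of the chain that are nonzero at $x$ appear in $\sigma$ so that the positive ones and the negative ones interleave by nesting. The plan is therefore to use the two-sided placement to argue that all positive chain terms occupy one interval of positions in $\sigma$, and all negative chain terms occupy another, containing no other terms nonzero at $x$. Positive terms at $x$ of elements in $L(k_i)$ lie before $f_{k_i}$ when $x\in U_{k_i}^+$, and negative terms lie after it. I would verify this by a careful induction, possibly adjusting the order to (plus-subtrees, $f_j$, minus-subtrees) versus its reverse according to the parity of the level.

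Once the claim holds, pick the stretch corresponding to the sign class with the larger total, that is, $\max(\sum f^+,\sum f^-)\ge\frac12\sum|f_k(x)|$. Within that stretch the other terms vanish at $x$, so the block sum equals that total. This gives
\begin{equation*}
\max_{p<q}\Big|\sum_{k=p}^q f_{\sigma(k)}(x)\Big|\ge \tfrac12\sum_{k=1}^N|f_k(x)|.
\end{equation*}
Points outside every $U_k$ are trivial.
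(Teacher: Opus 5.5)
Your ordering $L(j)=(P(j),f_j,M(j))$ is a correct choice. For $n<k$, a descendant $k$ of $n$ falls into $P(n)$ exactly when $U_k\subset U_n^+$ and into $M(n)$ exactly when $U_k\subset U_n^-$. So it obeys the paper's two rules: $U_k\subset U_n^+$ implies $f_k$ precedes $f_n$, and $U_k\subset U_n^-$ implies $f_k$ follows $f_n$. The paper builds such an ordering by inserting $f_{m+1}$ one at a time; your recursive concatenation produces it directly.

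The gap is the step you call the main obstacle: at a fixed $x$, the terms negative at $x$ and the terms positive at $x$ must form two separated runs. That is the entire content of the lemma, and you only announce a plan for it. The plan is also misdirected. Your description ``positive terms of $L(k_i)$ lie before $f_{k_i}$ when $x\in U_{k_i}^+$, and negative terms lie after it'' is false: if $x\in U_{k_i}^+$, every deeper chain element lies in $P(k_i)$, so all of them, of either sign, precede $f_{k_i}$. The parity-dependent reversal you contemplate would actually break the construction. Use the reversed order at odd levels, and take nested dyadic sets with $f_k=\pm1$ on $U_k^{\pm}$ forming a chain $k_0,k_1,k_2,k_3$ with $x\in U^-_{k_0}$, $x\in U^+_{k_1}$, $x\in U^+_{k_2}$, $x\in U^-_{k_3}$. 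This chain is laid out as $f_{k_0},f_{k_1},f_{k_3},f_{k_2}$, with values $-1,1,-1,1$ at $x$. No block of consecutive terms then has absolute sum above $1$, while $\tfrac12\sum_k|f_k(x)|=2$. Flipping all signs gives the same failure for the other parity convention.

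What is missing is a short pairwise argument, with no induction along the chain. Fix $x$, an index $n$ with $x\in U_n^-$, and an index $k$ with $x\in U_k^+$.
\begin{itemize}
\item Since $x\in U_n\cap U_k$, \eqref{y15} puts the later set inside one half of the earlier one.
\item Because $U_n^+\cap U_n^-=U_k^+\cap U_k^-=\varnothing$, this forces $U_k\subset U_n^-$ if $n<k$, and $U_n\subset U_k^+$ if $k<n$.
\item In your ordering the first case puts $f_k$ into $M(n)$, after $f_n$, and the second puts $f_n$ into $P(k)$, before $f_k$.
\end{itemize}
Hence every term negative at $x$ precedes every term positive at $x$, and all other terms vanish at $x$. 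Cutting between the two groups gives a prefix with sum $-\sum_{x\in U_j^-}|f_j(x)|$ and a suffix with sum $\sum_{x\in U_j^+}f_j(x)$. The larger of these in absolute value is at least $\tfrac12\sum_j|f_j(x)|$. This is exactly the paper's argument; adding it, and dropping the parity idea, completes your proof.
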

\begin{proof}
	Without loss of generality, we may assume that \eqref{y20} is the canonical ordering of the given tree system. To construct the desired permutation $\sigma$ we rearrange the functions in \eqref{y20} so that
	\begin{align}
		&U_k\subset U^+_n \Rightarrow f_k\text{ precedes } f_n,\label{y31}\\
		&U_k\subset U^-_n \Rightarrow f_k\text{ follows } f_n.\label{y32}
	\end{align}
	Assume inductively we found an ordering for the first $m$ functions $f_j$ such that \eqref{y31} and \eqref{y32} hold for all indices $1\le k,n\le m$. We determine the position of $f_{m+1}$ relative to the functions 
	\begin{equation}\label{y75}
		f_1,f_2,\ldots,f_m
	\end{equation}
	as follows. We partition the functions in \eqref{y75} into three groups. The first groups consists of those $f_n$ for which $U_{m+1}\subset U_n^-$. The second group consists of those $f_k$ such that $U_{m+1}\subset U_k^+$. The remaining functions form the third group. Observe that for any $f_n$ from the first group and $f_k$ from the second group, we have, we have $U_k^+\cap U_n^-\neq\varnothing$. Then, by the definition of tree system (see \e{y15}), one of the following relations must hold:
	\begin{equation}\label{y74}
		U_k\subset U_n^-,\quad U_n\subset U_k^+.
	\end{equation}
	In view of \eqref{y31} and \eqref{y32}, both cases imply that $f_k$ follows $f_n$. Furthermore, if $f_j$ belongs to the third group, then by property \e{y15}, $U_{m+1}\cap U_j=\varnothing$. Consequently, we may place $f_{m+1}$ after all elements of the first group and before all elements of the second group, while its position relative to the third group is irrelevant. It is straightforward to verify that the relations \eqref{y31} and \eqref{y32} are then satisfied for all indices $1\le k,n\le m+1$. Hence we complete the induction that yields an ordering satisfying \eqref{y31} and \eqref{y32}. This defines the permutation $\sigma$ such that 
	\begin{align}
		&U_k\subset U^+_n \Rightarrow \sigma(k)<\sigma(n),\label{y76}\\
		&U_k\subset U^-_n \Rightarrow \sigma(k)>\sigma(n).\label{y77}
	\end{align}
	For any $x\in \ZR$ there are mutually disjoint subsets $A$ and $B$ of $\{1,2,\ldots,N\}$, such that 
	\begin{align*}
		&x\in U^-_{j},\text{ if }j\in A,\\
		&x\in U^+_{j},\text{ if }j\in B,\\
		&x\not\in U_j \text{ if }j\not\in A\cup B.
			\end{align*} 
	Observe that if $n\in A$ and $k\in B$ then $\sigma(k)>\sigma(n)$. Indeed, since $x\in U^+_k\cap U^-_n\neq \varnothing$, by property \e{y15} one of the conditions in \e{y74} holds.  Using \e{y76}, \e{y77} both cases of \e{y74} yield $\sigma(k)>\sigma(n)$. Since we also have $f_j(x)=0$ as $j\notin A\cup B$, for an appropriate integer $l=l(x)$ we have 
	 \begin{equation*}
	 	\sum_{k=1}^lf_{\sigma(k)}(x)=\sum_{j\in A} f_j(x),\quad \sum_{k=l+1}^nf_{\sigma(k)}=\sum_{j\in B} f_j(x).
	 \end{equation*}
	 Moreover, the terms in the first sum are all non-positive, while in the second sum are non-negative. This implies that 
	\begin{align}
\max_{1\le p<q\le n}\left|\sum_{k=p}^qf_{\sigma(k)}(x)\right|&\ge \max\left\{\sum_{j=1}^l |f_{\sigma(j)}(x)|, \sum_{j=l+1}^n |f_{\sigma(j)}(x)|\right\}\\
&\ge \frac{1}{2} \cdot \sum_{k=1}^m|f_k(x)|
	\end{align}
	that completes the proof of lemma.
\end{proof}

\section{Truncated wavelet-type system}
 
We use the notation $C[A,B]$ for the class of functions $f$ on $\ZR$ such that $\supp f\subset [A,B]$ and the restriction of $f$ to $[A,B]$ is continuous. 
Given wavelet-type system \e{y21} we consider truncated functions
\begin{align}
	&\bar\phi_{n,j}(x)=\phi_{n,j}(x)\cdot \ZI_{\{|\phi_{n,j}(t)|\ge \lambda\cdot 2^{n/2}\}}(x),\label{y52}\\
	&\bar{\bar\phi}_{n,j}(x)=\phi_{n,j}(x)\cdot \ZI_{\{|\phi_{n,j}(t)|< \lambda\cdot 2^{n/2}\}}(x)
\end{align}
\begin{lemma}\label{L4}
	If $\{\phi_{n,j}\}$ satisfies \e{u43} and \e{u44}, then
	
	1) $\bar \phi_{n,j}$ is sign-preserving on the intervals of the dyadic grid $\tau+\ZD_{n+\mu_0}$ for any $\tau\in \ZR$ provided that
	\begin{equation}\label{y18}
		2^{\mu_0}>(c/\lambda)^{1/\alpha}.
	\end{equation}
	
	2) we have
	\begin{equation}\label{y39}
		\supp (\bar \phi_{n,j})\subset \left[\frac{j-2^{\nu_0-1}}{2^n}+\tau,\frac{j+2^{\nu_0-1}}{2^n}+\tau\right),
	\end{equation}
	whenever
	\begin{equation}\label{y80}
		2^{\nu_0}>\frac{1}{4}\cdot (c/\lambda)^{\frac{1}{1+\beta}},\quad 0\le |\tau|\le 2^{\nu_0-n-2},
	\end{equation}
	where $c$, $\alpha$ and $\beta$ are the constants in \e{u43}-\e{u51}.
\end{lemma}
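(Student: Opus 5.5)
Both parts follow directly from the pointwise bounds \e{u43} and \e{u44} applied to the support set of $\bar\phi_{n,j}$, namely $\{t\in[A,B]:\,|\phi_{n,j}(t)|\ge\lambda 2^{n/2}\}$. Part 2) uses the decay \e{u43}; part 1) uses the H\"older estimate \e{u44}. I expect no serious obstacle; the only delicate points are the continuity argument in part 1) and the endpoints $A,B$.

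\emph{Part 2).} If $\bar\phi_{n,j}(t)\neq0$, then by \e{u43}
\begin{equation*}
\lambda 2^{n/2}\le|\phi_{n,j}(t)|\le c2^{n/2}(1+|2^nt-j|)^{-1-\beta}\le c2^{n/2},
\end{equation*}
so $1+|2^nt-j|\le (c/\lambda)^{1/(1+\beta)}$. Under \e{y80} this gives $|2^nt-j|<4\cdot 2^{\nu_0}-1$, hence $|t-j2^{-n}|<2^{\nu_0+2-n}$. Adding the shift $|\tau|\le 2^{\nu_0-n-2}$ yields an inclusion of the form \e{y39}. I will have to check the exponent bookkeeping carefully: to obtain literally the half-width $2^{\nu_0-1-n}$ written in \e{y39}, the constant in \e{y80} has to be adjusted by an admissible power of $2$ (equivalently, $\nu_0$ replaced by $\nu_0+3$). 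Since only the existence of an admissible $\nu_0$ is used later, this adjustment is harmless, and I would state it as such.

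\emph{Part 1).} Let $I\in\tau+\ZD_{n+\mu_0}$, so $|I|=2^{-n-\mu_0}$. Suppose $\bar\phi_{n,j}$ takes values of both signs on $I$, at points $t,t'\in I$ with $\phi_{n,j}(t)\ge\lambda2^{n/2}$ and $\phi_{n,j}(t')\le-\lambda2^{n/2}$. We may assume both points lie in $[A,B]$, since otherwise the truncated function vanishes there. The function $\phi_{n,j}$ is continuous on $I\cap[A,B]$, which is an interval, so the intermediate value theorem gives a point $s$ between $t$ and $t'$ with $\phi_{n,j}(s)=0$. Take $s$ to be the zero nearest to $t$; then $|\phi_{n,j}(t)-\phi_{n,j}(s)|\ge\lambda 2^{n/2}$. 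On the other hand, \e{u44} together with $\xi\le1$ gives
\begin{equation*}
|\phi_{n,j}(t)-\phi_{n,j}(s)|\le c2^{n/2}(2^n|t-s|)^\alpha\le c2^{n/2}2^{-\mu_0\alpha}.
\end{equation*}
Combining the two bounds gives $2^{\mu_0\alpha}\le c/\lambda$, which contradicts \e{y18}. Hence $\bar\phi_{n,j}$ does not change sign on $I$.

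The points where $\phi_{n,j}$ is continuous but crosses the threshold are irrelevant, because the truncation only sets values to zero and so preserves sign. The endpoints $A,B$ cause no trouble, because $\bar\phi_{n,j}=0$ outside $[A,B]$.
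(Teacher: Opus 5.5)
Your proof is correct. Part 2) follows the paper's argument, while part 1) reaches the conclusion by a more direct route.

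\textbf{Part 1).} The paper rescales to $\phi=\phi_{0,0}$ and considers the maximal open intervals $(a,b)$ on which $\phi$ has constant sign and $\sup|\phi|>\lambda$. Inside each it takes the sub-interval $(a',b')$ spanned by the points where $|\phi|>\lambda$. It then uses $\lambda=\phi(a')-\phi(a)\le c(a'-a)^\alpha$ to show that consecutive such cores are separated by gaps of length at least $(\lambda/c)^{1/\alpha}$. This requires a separate treatment of the components touching $A$ or $B$ and an argument that there are only finitely many components. Any partition of smaller mesh is then sign-preserving.

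You use the same H\"older estimate: going from level $\lambda2^{n/2}$ down to a zero costs length at least $2^{-n}(\lambda/c)^{1/\alpha}$. But you apply it inside a single grid cell via the intermediate value theorem. This removes the finiteness argument, the inner/boundary case analysis, and the global bookkeeping of the intervals $U_k,V_k$. Both arguments actually give the slightly stronger statement that any partition of mesh $<2^{-n}(\lambda/c)^{1/\alpha}$ works, dyadic or not. The paper's version additionally describes the global shape of $\supp\bar\phi$, which is not used later.

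\textbf{Part 2).} Your use of \eqref{u43} is the paper's ``$c\xi\le\lambda$ at the endpoints''. Your suspicion about the constants is justified: the paper's one-line proof does not work with \eqref{y80} as printed. Requiring \eqref{y39} for all $|\tau|\le2^{\nu_0-n-2}$ means
$\supp\bar\phi_{n,j}\subset[(j-2^{\nu_0-2})2^{-n},(j+2^{\nu_0-2})2^{-n})$.
The endpoint bound then needs $(c/\lambda)^{1/(1+\beta)}\le1+2^{\nu_0-2}$ (for example $2^{\nu_0}\ge4(c/\lambda)^{1/(1+\beta)}$). But \eqref{y80} only gives $(c/\lambda)^{1/(1+\beta)}<2^{\nu_0+2}$. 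So the factor $\frac14$ in \eqref{y80} should be $4$.

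Your parenthetical repair is one step short. From \eqref{y80} as printed you get $|t-j2^{-n}|<2^{\nu_0+2-n}$. Replacing $\nu_0$ by $\nu_0+3$ makes the half-width in \eqref{y39} exactly $2^{\nu_0+2-n}$, which leaves no room for the shift $\tau$. You need $\nu_0+4$, i.e.\ the factor $2^4$ that turns $\frac14$ into $4$.

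As you say, this is harmless later, provided the enlarged $\nu_0$ is also used for the spacing $j2^{\nu_0}$ in \eqref{y54}. That way the intervals in \eqref{y56} still tile the line in Lemma~\ref{L15}.
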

\begin{proof}
1) For simplicity, we consider the function $\phi_{0,0}(x)$. The general case follows by the change of variables $x\to 2^nt-j$.  Denote
	\begin{equation}
		\phi(x)=\phi_{0,0}(x),\quad \bar \phi(x)=\bar \phi_{0,0}(x)=\phi(x)\cdot \ZI_{\{|\phi(t)|\ge \lambda\}}(x).
	\end{equation}
	Hence we have 
		\begin{align}
		&|\phi(t)|\le c\cdot \xi(t),\quad t\in \ZR,\label{y4}\\
		&|\phi(t)-\phi(t')|\le c|t-t'|^\alpha \cdot \xi(t) \text{ for all }t,t'\in [A,B].\label{y3}
	\end{align}
	and let denote
	\begin{equation}
		\phi(x)=\phi_{0,0}(x),\quad \bar \phi(x)=\bar \phi_{0,0}(x).
	\end{equation}
	Without loss of generality we may assume that $\phi$ takes both positive and negative values, since otherwise the conclusion of the lemma holds trivially. Then, we consider maximal open intervals $J\subset(A,B)$, satisfying one of two following properties 
	\begin{align*}
		&\rm{I})\, \phi(x)>0 \ \text{for all }x\in J\, \&\,  \sup_{x\in J}\phi(x)>\lambda,\\
		&\rm{II})\,\phi(x)<0 \ \text{for all }x\in J\, \&\,  \inf_{x\in J}\phi(x)<-\lambda,
	\end{align*} 
	We denote by $\ZG^+$ and $\ZG^-$ the collections of such maximal intervals, satisfying either property I) or II), respectively. Set $\ZG=\ZG^+\cup \ZG^-$. We call an $J=(a,b)\in \ZG$ inner if $[a,b]\subset (A,B)$. There are at most two not inner intervals in $\ZG$. Moreover, if $J=(a,b)$ is not inner, then we will have either $a=A$ or $b=B$. We also note that $(a,b)=(A,B)$ is not possible, since otherwise $\ZG$ will consist of a single interval, contradicting the assumption that $\phi$ changes sign. Let $J=(a,b)\in \ZG$ and suppose that $J\in \ZG^+$. If $J$ is an inner interval, then by continuity, 
	\begin{align*}
		\phi(a)=\phi(b)=0.
	\end{align*}
	Define an interval $I=(a',b')\subset J$ by 
	\begin{equation}
		a'=\inf\{x\in (a,b):\, \phi(x)>\lambda\},\quad b'=\sup\{x\in (a,b):\, \phi(x)>\lambda\}.
	\end{equation}
	Again by continuity,
	\begin{equation}
		\phi(a')=\phi(b')=\lambda. 
	\end{equation}
	Applying \e{y3}, we obtain
	\begin{align}
		\lambda= \phi(a')-\phi(a)\le c(a'-a)^\alpha,\qquad \lambda= \phi(b)-\phi(b')\le c(b-b')^\alpha.
	\end{align}
	Hence
	\begin{align}
		a'-a\ge (\lambda/c)^{1/\alpha},\quad b-b'\ge (\lambda/c)^{1/\alpha}\label{y10}.
	\end{align}
	Observe that the same bounds hold also for the inner intervals $J\in \ZG^-$. Moreover, if $J$ is not an inner interval, then at least one of two conditions \e{y10} holds.  Therefore for all intervals $J\in \ZG$ we conclude that
	\begin{equation}\label{y29}
		|J|\ge a'-a+b-b'\ge (\lambda/c)^{1/\alpha}.
	\end{equation}
	Since $\phi$ satisfies \e{y3} and the intervals of $\ZG$ are pairwise disjoint, from \e{y29} it follows that there are only finitely many of those intervals. Hence we have 
	\begin{equation}
		\ZG=\{J_1,J_2,\ldots,J_m\},\quad J_k=(a_k,b_k), 
	\end{equation}
	and we assume that
	\begin{equation}
		-\infty\le a_1<b_1\le a_2<b_2\le\cdots\le a_m<b_m\le +\infty.
	\end{equation}
	As we have proved that for each interval $J_k$ there exists an interval $(a'_k,b'_k)\subset (a_k,b_k)$ such that at least one of two following conditions holds:
	\begin{equation}\label{y11}
		a'_k-a_k\ge (\lambda/c)^{1/\alpha},\quad b_k-b'_k\ge (\lambda/c)^{1/\alpha}.
	\end{equation}
	Introduce the intervals
	\begin{align}
		&U_0=(-\infty,a'_1),\, U_1=(b'_1,a'_2),\,\ldots, U_{m-1}=(b'_{m-1},a'_{m}),\, U_{m}=(b'_m,+\infty),\\
		&V_1=(a'_1,b'_1),\,V_2=(a'_2,b'_2),\ldots,V_m=(a'_m,b'_m).
	\end{align}
Altogether, these intervals form a finite partition of $\ZR$, disregarding the endpoints. From \e{y11} we have
	\begin{align}
		|U_k|\ge (\lambda/c)^{1/\alpha},\quad k=0,1,\ldots,m.
	\end{align}
	Moreover,
	\begin{align}
		\bar \phi(x)=0 \text{ as }x\in U_k,\quad k=0,1,\ldots,m,
	\end{align}
	and $\bar \phi$ is sign-preserving on each interval $V_k$, $k=1,2,\ldots,m$.
	Therefore any partition of $\ZR$ whose intervals have lengths strictly smaller than $(\lambda/c)^{1/\alpha}$ is sign-preserving for $\bar\phi$.
	In particular the dyadic grids $s+\ZD_{\mu_0}$ has this property whenever \e{y18} holds. This completes the proof of the first part of lemma. 
	
	2) To prove the second part we again consider the particular case $\phi(x)=\phi_{0,0}(x)$. Then we will need to show that
\begin{equation}\label{y88}
		\supp (\bar \phi)\subset \left[-2^{\nu_0-1}+\tau,2^{\nu_0-1}+\tau\right),\quad 0\le |\tau|\le 2^{\nu_0-2}.
	\end{equation}
It is enough to see that under the conditions \e{y80} (with $n=0$) the function $c\xi(x)$ in inequality \e{y4} 
has an upper bound $\lambda$ at the endpoints of the interval in \e{y88}.
\end{proof}
\begin{lemma}
		If $\{\phi_{n,j}\}$ is $L^2$-normalized wavelet-type system, then for any number $\varepsilon>0$, there exists a $\lambda>0$ such that 
	\begin{align}
		&\frac{\|\bar{\bar\phi}_{n,j}\|_1}{ \varepsilon }\le  c2^{-n/2}\le  \|\bar{\phi}_{n,j}\|_1\le C2^{-n/2},\label{y53} \\
		&|\{x:\, |\bar{\phi}_{n,j}(x)|> \lambda2^{n/2 }\}|>c2^{-n},\label{y81}
	\end{align}
	where $c$ and $C>0$ are admissible constants.
\end{lemma}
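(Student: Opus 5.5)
By the change of variables $x\to 2^nx-j$ and the scaling of \e{u43}, it suffices to treat a single function $\phi=2^{-n/2}\phi_{n,j}(2^{-n}(\cdot+j))$. This function satisfies $\|\phi\|_2=1$ and $|\phi|\le c\,\xi$, and $\bar\phi=\phi\ZI_{\{|\phi|\ge\lambda\}}$, $\bar{\bar\phi}=\phi\ZI_{\{|\phi|<\lambda\}}$. Since $\|\bar\phi_{n,j}\|_1=2^{-n/2}\|\bar\phi\|_1$, and similarly for $\bar{\bar\phi}$, all bounds reduce to the case $n=0$ with admissible constants independent of $(n,j)$. The upper bound $\|\bar\phi\|_1\le\|\phi\|_1\le c\|\xi\|_1=C$ is immediate because $\beta>0$.

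For the smallness of $\bar{\bar\phi}$ I would split $\ZR$ into $\{|x|\le R\}$ and $\{|x|>R\}$:
\begin{equation*}
\|\bar{\bar\phi}\|_1\le 2R\lambda+c\int_{|x|>R}\xi(x)\,dx\lesssim R\lambda+R^{-\beta}.
\end{equation*}
Choosing $R=\lambda^{-1/(1+\beta)}$ gives $\|\bar{\bar\phi}\|_1\lesssim\lambda^{\beta/(1+\beta)}$, which is as small as we like. Hence $\|\bar{\bar\phi}\|_1\le\varepsilon c$ once $\lambda$ is small, where $c$ is the lower constant obtained next.

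The main step is a uniform lower bound for $\|\bar\phi\|_1$, and I would get it from the normalization. First,
\begin{equation*}
1=\|\phi\|_2^2\le\|\phi\|_\infty\|\phi\|_1\le c\|\phi\|_1,
\end{equation*}
so $\|\phi\|_1\ge 1/c$. Then $\|\bar\phi\|_1\ge\|\phi\|_1-\|\bar{\bar\phi}\|_1\ge 1/c-O(\lambda^{\beta/(1+\beta)})\ge 1/(2c)$ for $\lambda$ small. Taking the admissible constant $c_0=1/(2c)$ and shrinking $\lambda$ further so that $\|\bar{\bar\phi}\|_1\le\varepsilon c_0$ gives \e{y53}.

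For \e{y81}, note that $\{|\bar\phi|>\lambda\}=\{|\phi|>\lambda\}$, so we need a lower bound on $|\{|\phi|>\lambda\}|$. Using $|\phi|\le c$,
\begin{equation*}
\|\bar\phi\|_1\le c\,|\{|\phi|\ge\lambda\}|.
\end{equation*}
This gives $|\{|\phi|\ge\lambda\}|\ge c_0/c$. To pass to the strict inequality, I would apply the same argument with $\lambda$ replaced by $2\lambda$, which is still small, and use the inclusion $\{|\phi|\ge2\lambda\}\subset\{|\phi|>\lambda\}$. Rescaling back turns this into $|\{|\bar\phi_{n,j}|>\lambda2^{n/2}\}|\gtrsim 2^{-n}$.

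The only real obstacle is keeping all constants independent of $(n,j)$. This holds because the only inputs used are the uniform bound \e{u43} and $\|\phi_{n,j}\|_2=1$. Neither the H\"older condition \e{u44} nor the mean-zero condition \e{u65} is needed.
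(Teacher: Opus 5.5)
Your proof is correct and follows essentially the paper's route. You rescale to $n=0$, obtain $\|\phi_{n,j}\|_1\sim 2^{-n/2}$ from $\|\phi_{n,j}\|_2=1$ and \eqref{u43}, make the lower truncation negligible for small $\lambda$, and convert the $L^1$ lower bound for $\bar\phi_{n,j}$ into a measure bound via $|\phi_{n,j}|\le c2^{n/2}$. The differences are minor: you make the smallness explicit ($\|\bar{\bar\phi}\|_1\lesssim\lambda^{\beta/(1+\beta)}$), which the paper leaves implicit, and for \eqref{y81} you handle the level set $\{|\phi|=\lambda\}$ by doubling the threshold, whereas the paper bounds the mass of $\bar\phi_{n,j}$ off $E$ by $\lambda2^{n/2}|U|\lesssim 2^{-n/2}\lambda^{\beta/(1+\beta)}$ and then solves for $|E|$.
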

\begin{proof}
	Using \e{u43}, we obtain
	\begin{align}
		&1=\|\phi_{n,j}\|_2^2\lesssim 2^{n/2}\|\phi_{n,j}\|_1,\label{y47}\\
		&\|\phi_{n,j}\|_1\le 2^{n/2} \int_\ZR\xi(2^nx-j)dx=2^{-n/2}\|\xi\|_1\lesssim 2^{-n/2}.
	\end{align}
	This yields $\|\phi_{n,j}\|_1\sim 2^{-n/2}$. Thus, taking into account \e{u43} and \e{u51}, for small enough $\lambda>0$ we obtain 
	\begin{equation}
		2^{n/2}\|\bar{\bar\phi}_{n,j}\|_1<c\varepsilon,\quad 2^{n/2}\|\bar{\phi}_{n,j}\|_1>c,
	\end{equation}
	where $c>0$ is an admissible constant. This implies \e{y53}. Then, we have
	\begin{align}
		E=\{x:\, |\bar{\phi}_{n,j}(x)|> &\lambda2^{n/2 }\}\\
		&\subset U=\{x:\, 2^{n/2}\xi(2^nx-j)> \lambda2^{n/2 }/c\}.
	\end{align}
	On the other hand,
	\begin{equation}\label{y89}
		|U|=2^{-n}|\{x:\, \xi(x)> \lambda/c\}|\lesssim 2^{-n}\cdot \lambda^{-\frac{1}{1+\beta}}.
	\end{equation}
	Thus we obtain
	\begin{equation}
		 2^{n/2}\cdot|E|+\lambda 2^{n/2} (|U|-|E|)\ge \|\bar \phi_{n,j}\|_1\gtrsim 2^{-n/2}
	\end{equation}
	and therefore, using \e{y89}, 
	\begin{equation}
		|E|(1-\lambda)+2^{-n}\cdot \lambda^{\frac{\beta}{1+\beta}}\gtrsim 2^{-n}.
	\end{equation}
	For small enough $\lambda>0$ it follows that $|E|\gtrsim 2^{-n}$.
\end{proof}
	The integer constants
	\begin{equation}
		\nu_0=\left[(c/\lambda)^{1/\alpha}\right]+2,\quad \mu_0=\left[\frac{1}{4}\cdot (c/\lambda)^{\frac{1}{1+\beta}}\right]+2
	\end{equation}
satisfy \e{y18} and \e{y80}	respectively. Those, together with the constant 
\begin{equation}\label{y9}
	l=\mu_0+\nu_0-1>2,
\end{equation}
will be used throughout the remainder of the paper. At some moment those will be fixed when we fix the parameter $\lambda$. With these constants we associate the following subsystem of our wavelet-type system together with its truncations:
\begin{align}\label{y54}
	&\Psi_{k,j}(x)=\phi_{2^{kl},\,j2^{\nu_0} }(x),\\
	&\bar \Psi_{k,j}(x)=\bar \phi_{2^{kl},\,j2^{\nu_0} }(x),\quad  \bar{\bar \Psi}_{k,j}(x)=\bar {\bar\phi}_{2^{kl},\,j2^{\nu_0} }(x),\label{y64}
\end{align}
where 
\begin{equation}
	j\in G_k=\left\{j\in \ZZ:\,(2^{kl},\,j2^{\nu_0} )\in \ZZ^2(A,B)\right\},\quad k\in \ZZ.
	\end{equation}
By \lem{L4} we have
\begin{equation}\label{y56}
	\supp(\bar \Psi_{k,j})\subset U^\tau_{k,j}=\left[\frac{j-1/2}{2^{kl-\nu_0}}+\tau,\frac{j+1/2}{2^{kl-\nu_0}}+\tau\right), \, 0\le |\tau|\le 2^{\nu_0-kl-2}.
\end{equation}
Moreover, for any $\tau\in \ZR$ the dyadic grid $\tau+\ZD_{kl+\mu_0}$ is a sign-preserving partition for functions $\bar \Psi_{k,j}$ for every permitted integers $j$.
\begin{lemma}\label{L15}
If $\{\phi_{n,j}\}$ is a wavelet-type system on $[A,B]$, then the truncated function system $\{\bar \Psi_{k,j}(x)\}$ from \e{y54}
is a tree system on $\ZR$.
\end{lemma}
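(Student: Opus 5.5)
The natural route is through \lem{L8}. We need two interlaced sequences of partitions $\ZF_k\prec\ZC_k\prec\ZF_{k+1}$, $k\in\ZZ$, with two properties. First, $\supp\bar\Psi_{k,j}$ should lie in a single interval of $\ZF_k$, a different interval for each $j$. Second, $\ZC_k$ should be sign-preserving for every $\bar\Psi_{k,j}$, $j\in G_k$. The scale of $\bar\Psi_{k,j}$ is $2^{-kl}$, where $kl$ is the dilation level; I read the index in \e{y54} this way, as is consistent with \e{y56}.

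For $\ZF_k$ we take the dyadic grid $\tau+\ZD_{kl-\nu_0}$, whose intervals have length $2^{\nu_0-kl}$. By \e{y56}, $\supp\bar\Psi_{k,j}$ lies in $U^\tau_{k,j}$, which is exactly one interval of this grid. Here $j$ indexes the intervals, and distinct $j$ give disjoint intervals. So we extend $\bar\Psi_{k,j}:=0$ for $j\notin G_k$ and get a system indexed by all $j\in\ZZ$ with $\supp\bar\Psi_{k,j}\subset J_{k,j}$.

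For $\ZC_k$ we take $\tau+\ZD_{kl+\mu_0}$. By part 1) of \lem{L4}, applied to the dilation level $kl$ with an arbitrary shift, this grid is sign-preserving for all $\bar\Psi_{k,j}$. The shift in \e{y56} must satisfy $|\tau|\le 2^{\nu_0-kl-2}$, and that bound shrinks as $k$ grows. The simplest fix is a single $\tau=0$: it is allowed for every $k$, and \e{y56} then holds for all $k$ simultaneously.

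The nesting is then automatic, because all partitions are dyadic grids with the same shift $0$. We have $\ZF_k=\ZD_{kl-\nu_0}\prec\ZC_k=\ZD_{kl+\mu_0}$, since $kl-\nu_0\le kl+\mu_0$. We also need $\ZC_k\prec\ZF_{k+1}=\ZD_{(k+1)l-\nu_0}$, which requires $kl+\mu_0\le kl+l-\nu_0$, that is, $l\ge\mu_0+\nu_0$. The choice \e{y9}, $l=\mu_0+\nu_0-1$, misses this by one. This is exactly where the half-integer offset in \e{y56} enters. The intervals $U^0_{k,j}$ are $[(j-\tfrac12)2^{\nu_0-kl},(j+\tfrac12)2^{\nu_0-kl})$, which lie on the grid $\ZD_{kl-\nu_0+1}$ shifted by half a cell. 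So we should compare grids at the level of cells of length $2^{\nu_0-kl-1}$, and the condition $kl+\mu_0\le (k+1)l-\nu_0+1$ holds with equality under \e{y9}.

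The main obstacle is matching shifts across different $k$. Each $\ZF_k$ is the grid $\ZD_{kl-\nu_0}$ translated by $2^{\nu_0-kl-1}$, and this translation depends on $k$. I therefore need to check that the translated coarse grid $\ZF_{k+1}$ is refined by a genuine dyadic grid $\ZC_k$ of level $kl+\mu_0$. The translation $2^{\nu_0-(k+1)l-1}$ is a multiple of $2^{-(kl+\mu_0)}$ exactly when $\nu_0-(k+1)l-1\ge-(kl+\mu_0)$, i.e.\ $l\le\mu_0+\nu_0-1$, which holds with equality. Likewise $\ZC_k$ refines the shifted $\ZF_k$, since that shift is a multiple of $2^{-(kl+\mu_0)}$. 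With these verifications, \lem{L8} yields that $\{\bar\Psi_{k,j}\}$ is a tree system, and discarding the zero functions preserves the tree property.

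If the shift bookkeeping fails by one level, the fallback is to use the freedom in \lem{L4}. Part 1) allows any shift $\tau$, and part 2) allows $|\tau|\le 2^{\nu_0-kl-2}$. One can then choose shifts $\tau_k$ recursively so that the grids nest.
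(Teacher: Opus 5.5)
Your route through \lem{L8} is the paper's route, but two steps fail as written.

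First, a common shift $\tau=0$ cannot work. \lem{L8} requires $\ZF_k\prec\ZC_k\prec\ZF_{k+1}$. By transitivity, every endpoint of $\ZF_k$ must then be an endpoint of $\ZF_{k+1}$. With $\tau=0$ the endpoints of $\ZF_k$ are $(2j+1)2^{\nu_0-kl-1}=(2j+1)2^{l}\cdot 2^{\nu_0-(k+1)l-1}$, which are even multiples of $2^{\nu_0-(k+1)l-1}$. The endpoints of $\ZF_{k+1}$ are the odd multiples. In fact each endpoint of $\ZF_k$ is the midpoint of some $U^0_{k+1,i}$, so no $\ZC_k$ of any level can be inserted between them. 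This is why the paper uses the $k$-dependent shifts $\tau_k=\frac{1}{(2^l-1)2^{\mu_0+(k-1)l}}$. They satisfy $0<\tau_k<2^{\nu_0-kl-2}$, so \e{y56} still applies. They also satisfy $\tau_k-\tau_{k+1}=2^{-(kl+\mu_0)}$, which is exactly half an interval of $\ZF_{k+1}$ and cancels the half-cell offset. Your fallback mentions recursive shifts but does not construct them. They must be defined for all $k\in\ZZ$ and obey $|\tau_k|\le 2^{\nu_0-kl-2}$, which the paper's closed form handles.

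Second, you were right that \e{y9} misses the requirement $l\ge\mu_0+\nu_0$ by one, and the half-cell offset does not repair it. An offset moves a grid; it does not change its cell length. The intervals of $\ZF_{k+1}$ have length $2^{\nu_0-(k+1)l}=2^{1-kl-\mu_0}$, twice the cells of $\ZC_k=\tau+\ZD_{kl+\mu_0}$. So a cell of $\ZC_k$ can never be a union of intervals of $\ZF_{k+1}$, whatever shifts are used. Your check that the translation of $\ZF_{k+1}$ is a multiple of $2^{-(kl+\mu_0)}$ only proves $\ZF_{k+1}\prec\ZC_k$. That is the reverse of what \lem{L8} needs; you yourself phrase the task as $\ZF_{k+1}$ being refined by $\ZC_k$.

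The paper's text has the same slip. It identifies $\ZF_k$ with $\tau_k+\ZD_{kl-\nu_0+1}$, although $|U^{\tau_k}_{k,j}|=2^{\nu_0-kl}$. Correct bookkeeping with the paper's $\tau_k$ gives $\ZF_k\prec\ZF_{k+1}=\tau_k+\ZD_{kl+\mu_0-1}$, so $\ZC_k$ should be taken equal to this grid. That requires the $\bar\Psi_{k,j}$ to be sign-preserving on a grid of level $kl+\mu_0-1$, i.e.\ $2^{\mu_0-1}>(c/\lambda)^{1/\alpha}$. You can arrange this by taking $\mu_0$ one unit larger than \e{y18} demands. Alternatively, take $l\ge\mu_0+\nu_0$ and recompute the shifts.
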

\begin{proof}
Set 
	\begin{align}
		\tau_k=\frac{1}{(2^l-1)2^{\mu_0+(k-1)l}},\quad k\in \ZZ,\label{y36}
	\end{align}
where $\mu_0$ and $l$ are constants in \e{y9}. One can check,
\begin{equation}\label{y38}
	0<\tau_k<2^{\nu_0-kl-2}.
\end{equation}
We remarked before the lemma that the dyadic grid  
\begin{equation}\label{y50}
	\ZC_k=\tau_k+\ZD_{kl+\mu_0}
\end{equation}
provides a sign-preserving partition for every function $\bar\Psi_{k,j}$, $j\in G_k$. On the other hand the collection of intervals $\ZF_k=\{U^{\tau_k}_{k, j }: j\in \ZZ\}$ in \e{y56} form a partition for $\ZR$. Moreover, one can check that $\ZF_k$ is a dyadic grid, namely
\begin{equation}
	\ZF_k=\{U^{\tau_k}_{k, j }: j\in \ZZ\}=\tau_k+\ZD_{kl-\nu_0+1}.
\end{equation}
Since $kl-\nu_0+1\le kl+\mu_0$, we conclude $\ZF_k\prec \ZC_k$. Then we have $\ZC_k= \ZF_{k+1}$ or equivalently
\begin{equation}\label{y37}
	\tau_k+\ZD_{kl+\mu_0}= \tau_{k+1}+\ZD_{(k+1)l-\nu_0+1},
\end{equation}
Indeed,  it follows from \e{y9} that 
\begin{equation*}
	r=kl+\mu_0=(k+1)l-\nu_0+1.
\end{equation*}
Thus \e{y37} can be equivalently written as 
\begin{equation}\label{y51}
	u+\ZD_r=\ZD_r,
\end{equation}
 where (see \e{y36})
\begin{align*}
	u=\tau_k-\tau_{k+1}=\frac{1}{2^{\mu_0}(2^l-1)}\left(\frac{1}{2^{(k-1)l}}-\frac{1}{2^{kl}}\right)=2^{-kl-\mu_0}=2^{-r}.
\end{align*}
This implies \e{y51} and therefore, \e{y37}. Hence we have 
\begin{equation}
	\ZF_k\prec \ZC_k=\ZF_{k+1}, \quad \supp(\Psi_{k,j})\subset U_{k,j}^{\tau_k}
\end{equation}
and $\ZC_k$ is a sign-preserving partition for every function $\bar\Psi_{k,j}$, $j\in G_k$. Thus applying \lem{L8}, we obtain that $\{\bar\Psi_{k,j}\}$ is a tree system. 
	\end{proof}
	Let $f\in L^1(\ZR)$ and $\Delta\subset \ZR$ be an arbitrary interval with $|\Delta|=2^{-m}$. Then for any integer $n\ge m$ we have 
	\begin{equation}\label{y61}
		\sum_{j\in \ZZ}\int_\Delta f(x+j\cdot 2^{-n})=2^{n-m}\int_\ZR f(t)dt=2^n|\Delta|\int_\ZR f(t)dt.
	\end{equation}
	This will be used in the following lemma and in some other instances below.
	\begin{lemma}\label{L12}
		Let $\{\phi_{n,j}\}$ be a $L^2$-normalized wavelet-type system on $[A,B]$ and let $\{\Psi_{k,j}(x)\}$ be as in \e{y54}. Denote
		\begin{align}
			&\bar S(x)= \sum_{k=m+1}^Ma_k\sum_{j\in G_k}|\bar \Psi_{k,j}(x)|,\\
			&\bar {\bar S}(x)= \sum_{k=m+1}^Ma_k\sum_{j\in G_k}|\bar{\bar \Psi}_{k,j}(x)|,
		\end{align} 
		where $a_k>0$ and $M>m$ are arbitrary integers such that $2^{-ml}<B-A$. Then we may fix a $\lambda$ as an admissible constant such that for any interval $\Delta\subset [A,B]$ with $|\Delta|\ge 2^{-ml}$, 
		\begin{align}
			|\{x\in \Delta:\, \bar S(x)>8\bar{\bar S}(x)\}|>c|\Delta|,
		\end{align}
		where $0<c<1$ is an admissible constant. 
	\end{lemma}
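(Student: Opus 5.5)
The plan is to compare $\int_\Delta \bar{\bar S}$ with a lower bound for $\bar S$ on a set of proportional measure, and then use Chebyshev's inequality. Since $|\Delta|\ge 2^{-ml}$ and every $k\ge m+1$ has scale $2^{-kl}$ finer than $|\Delta|$, the averaging identity \e{y61} applies. The functions $\Psi_{k,j}$ are translates at step $2^{\nu_0-kl}$. After splitting into $2^{\nu_0}$-periodic families if necessary, and up to boundary effects from $\ZZ^2(A,B)$ that can be absorbed by shrinking $\Delta$ slightly, we expect
\begin{equation*}
\int_\Delta \sum_{j\in G_k}|\bar{\bar\Psi}_{k,j}|\lesssim 2^{kl-\nu_0}|\Delta|\,\|\bar{\bar\phi}_{2^{kl},\cdot}\|_1\lesssim \varepsilon\, 2^{kl/2}|\Delta| .
\end{equation*}
This uses \e{y53}. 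Summing over $k$ gives $\int_\Delta\bar{\bar S}\lesssim \varepsilon |\Delta|\sum_{k=m+1}^M a_k2^{kl/2}$. (The index convention is that the scale of $\Psi_{k,j}$ is $2^{-kl}$; the normalization $2^{kl/2}$ is what enters.)

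Next comes the lower bound for $\bar S$. By Lemma~\ref{L4}, $\supp\bar\Psi_{k,j}$ lies in the interval $U^{\tau}_{k,j}$. For fixed $k$ these intervals are pairwise disjoint and tile $\ZR$, so at each point at most one $j$ contributes. By \e{y81}, each $\bar\Psi_{k,j}$ satisfies $|\bar\Psi_{k,j}|>\lambda 2^{kl/2}$ on a set $E_{k,j}\subset U_{k,j}$ with $|E_{k,j}|\ge c2^{-kl}\ge c2^{-\nu_0}|U_{k,j}|$. Hence $E_k=\bigcup_j E_{k,j}$ has density at least $c2^{-\nu_0}=:\delta$ in $\Delta$, uniformly in $\Delta$.

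The key issue is that $\bar S\ge \lambda\sum_k a_k2^{kl/2}\ZI_{E_k}$, and we need this to be large on a fixed proportion of $\Delta$ relative to $\sum_k a_k2^{kl/2}$. Integrating gives only $\int_\Delta \bar S\ge \lambda\delta|\Delta|\sum a_k2^{kl/2}$, together with the pointwise upper bound $\bar S\le C\sum a_k 2^{kl/2}=:CA$ from \e{u43}. A Paley--Zygmund/reverse-Chebyshev argument then gives
\begin{equation*}
|\{x\in\Delta:\bar S>\tfrac12\lambda\delta A\}|\ge \frac{\lambda\delta}{2C}|\Delta| .
\end{equation*}
Here $\lambda$ enters both $\delta$ (through $\nu_0$) and the threshold. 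This is the step I expect to be delicate: the order of choosing constants. $\nu_0$ and the constant in \e{y81} depend on $\lambda$, and $\varepsilon$ must be chosen after them. I would first fix $\lambda$ from the previous lemma for a provisional $\varepsilon_0$, which fixes $\nu_0,\mu_0,\delta$. One still needs $\|\bar{\bar\phi}\|_1\le\varepsilon 2^{-n/2}$ with $\varepsilon$ small relative to $\lambda\delta$. This is circular, because shrinking $\lambda$ only improves $\|\bar{\bar\phi}\|_1$ like $\lambda^{\beta/(1+\beta)}$, while $\delta\sim 2^{-\nu_0}$ degrades polynomially in $\lambda$. To avoid this, I would not use the crude density bound. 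Instead I would use $\int_\Delta\bar S\gtrsim |\Delta|A$ directly, which follows from \e{y53} and \e{y61} with $\|\bar\phi\|_1\ge c2^{-n/2}$ independent of $\lambda$, and the Paley--Zygmund bound with the uniform sup bound. This gives a set of measure $\ge c_1|\Delta|$ on which $\bar S\ge c_2A$, with $c_1,c_2$ independent of $\lambda$.

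Finally, by Chebyshev, $|\{x\in\Delta:8\bar{\bar S}\ge c_2A\}|\le 8C\varepsilon|\Delta|/c_2$. Choosing $\varepsilon$ (hence $\lambda$) so that this is below $c_1|\Delta|/2$ yields $|\{\bar S>8\bar{\bar S}\}\cap\Delta|\ge \tfrac{c_1}{2}|\Delta|$. The remaining care is in the averaging identity: the translates in $\Psi$ have step $2^{\nu_0}\cdot2^{-kl}$ rather than $2^{-kl}$. This only changes admissible constants by the factor $2^{\nu_0}$ in both the upper and the lower average. That factor is harmless only if it appears identically on both sides, which it does, since both are computed via the same identity \e{y61}.
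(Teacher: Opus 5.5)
There is a genuine gap, exactly at the step you flagged. Your diagnosis of the circularity is right, but the escape you propose rests on a miscount. It is not true that $\int_\Delta\bar S\gtrsim|\Delta|A$, with $A=\sum_{k=m+1}^M a_k2^{kl/2}$ and a $\lambda$-independent constant.

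The translates $\Psi_{k,j}$ are spaced $2^{\nu_0-kl}$ apart, so only about $2^{kl-\nu_0}|\Delta|$ of them meet $\Delta$. Each has $L^1$-mass $\approx 2^{-kl/2}$ by \eqref{y53}, so in fact $\int_\Delta\bar S\ge c_2 2^{-\nu_0}|\Delta|A$, and no better in general. The factor $2^{-\nu_0}$ does appear in both $L^1$ quantities. It does not appear in the bound $\bar S\le c_3A$ that Paley--Zygmund uses, so it does not cancel. You only get $|\{x\in\Delta:\bar S>\tfrac{c_2}{2}2^{-\nu_0}A\}|\ge\tfrac{c_2}{2c_3}2^{-\nu_0}|\Delta|$.

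This is sharp. For a compactly supported generating wavelet and a single scale, $\bar S$ vanishes outside a set of density $O(2^{-\nu_0})$, while $\nu_0\to\infty$ as $\lambda\to0$. Chebyshev applied to $\int_\Delta\bar{\bar S}\lesssim\varepsilon2^{-\nu_0}|\Delta|A$ at this level leaves an exceptional set of measure $\approx\varepsilon|\Delta|$. So you are back to needing $\varepsilon2^{\nu_0}\ll1$. As you observed yourself, this cannot be obtained from $\varepsilon\sim\lambda^{\beta/(1+\beta)}$ and $2^{\nu_0}\sim\lambda^{-1/(1+\beta)}$ when $\beta\le1$.

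The paper's proof follows the same scheme and does not settle this either. It gets $|E'_\Delta|>\frac{c_2}{c_32^{\nu_0}}|\Delta|$ by Paley--Zygmund and $|\Delta\setminus E''_\Delta|\le\frac{16\varepsilon}{c_1}|\Delta|$ by Chebyshev. It then closes with ``since $\varepsilon$ can be arbitrarily small'', which tacitly requires the same $\varepsilon2^{\nu_0}\to0$.

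The missing idea is to bound $\bar{\bar S}$ pointwise rather than in $L^1$.
\begin{itemize}
\item By \eqref{u43} and the truncation, $|\bar{\bar\Psi}_{k,j}(x)|\le 2^{kl/2}\min\{\lambda,c\,\xi(2^{kl}x-j2^{\nu_0})\}$.
\item The translate nearest to $x$ contributes at most $\lambda2^{kl/2}$. All the others together contribute at most $C_\beta c\,2^{kl/2}2^{-\nu_0(1+\beta)}\lesssim\lambda2^{kl/2}$, by \eqref{y80}.
\item Hence $\bar{\bar S}\le C\lambda A$ everywhere.
\item On the Paley--Zygmund set above, $\bar S>\tfrac{c_2}{2}2^{-\nu_0}A>8C\lambda A\ge8\bar{\bar S}$, as soon as $16C\lambda2^{\nu_0}<c_2$.
\item Take $\nu_0$ to be the least integer satisfying \eqref{y80}. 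Then $\lambda2^{\nu_0}\lesssim\lambda^{\beta/(1+\beta)}$, so the condition holds for a small admissible $\lambda$. Here $c_2$ is independent of $\lambda$ by the lower bound in \eqref{y53}.
\end{itemize}
The lemma then follows with $c=\tfrac{c_2}{2c_3}2^{-\nu_0}$, which is admissible once $\lambda$ is fixed.
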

	\begin{proof}
	Without loss of generality we may assume that $|\Delta|=2^{-ml}$. From \e{u43}, \e{u51} and \e{y64} it follows that
		\begin{align}
			\sum_{j\in G_k}|\bar \Psi_{k,j}(x)|&\le \sum_{j\in G_k}|\phi_{2^{kl},\,j2^{\nu_0}}(x)|\\
			&\lesssim 2^{kl/2}\sum_{j\in \ZZ}\xi(2^{kl}x-j2^{\nu_0})\\
			&\lesssim 2^{kl/2}\sum_{j\in \ZZ}\xi(j2^{\nu_0})\\
			&\lesssim 2^{kl/2}.\label{y59}
		\end{align}
		Obviously, the supporting intervals 
		\begin{equation}
			U_{k,j}=U_{k,j}^0,\quad j\in \ZZ 
		\end{equation}
		from \e{y56} (with $\tau=0$) form a partition of $\ZR$ and $|U_{k,j}|=2^{\nu_0-kl}$. Thus, if $k> m$, then 
		\begin{equation}
			|\Delta|=2^{(k-m)l-\nu_0}|U_{k,j}|>2^{l-\nu_0}|U_{k,j}|>|U_{k,j}|.
		\end{equation}
		Then, using \e{y53}, we get
		\begin{align}
			\left\|\sum_{j\in G_k}|\bar \Psi_{k,j}(x)|\right\|_{L^1(\Delta)}&= 	\sum_{j\in G_k}\left\|\bar \phi_{2^{kl},\,j2^{\nu_0} }\right\|_{L^1(\Delta)}\\
			&\gtrsim 2^{-kl/2}\card\{j\in  G_k:\, U_{k,j}\cap \Delta\neq\varnothing\}\\
			&\gtrsim 2^{-kl/2}\cdot 2^{(k-m)l-\nu_0}\\
			&=2^{kl/2-\nu_0}\cdot|\Delta|.\label{y57}
		\end{align}
		Observe that for $\bar {\bar\xi}(x)=\min\{\xi(x),\lambda\}$ and for small enough $\lambda$ we have
		\begin{equation}
			\int_\ZR\bar {\bar\xi}(x)dx<\varepsilon.
		\end{equation}
		Thus, using the first inequality in \e{y53} and the remark concerning to \e{y61}, we obtain 
		\begin{align}
			\left\|\sum_{j\in G_k}|\bar {\bar \Psi}_{k,j}(x) |\right\|_{L^1(\Delta)}	&\lesssim 2^{kl/2}\sum_{j\in \ZZ}\int_\Delta\bar {\bar \xi}(2^{kl}x-j2^{\nu_0} )dx\\	
			&=2^{kl/2}\sum_{j\in \ZZ}\int_\Delta\bar {\bar \xi}(2^{kl}(x-j2^{\nu_0-kl}) )dx\\
			&=2^{kl/2}\cdot 2^{kl-\nu_0}|\Delta|\int_\ZR\bar {\bar \xi}(2^{kl}x)dx\\
			&=2^{kl/2}\cdot 2^{-\nu_0}|\Delta|\int_\ZR\bar {\bar \xi}(x)dx\\
			&< \varepsilon\cdot 2^{kl/2-\nu_0}\cdot|\Delta|.\label{y58}
		\end{align}
Thus, combining \e{y59}, \e{y57} and \e{y58}, we obtain positive admissible constants $c_1$, $c_2$ and $c_3$ such that $c_2<c_32^{\nu_0}$ and
\begin{equation}\label{y60}
	\frac{c_1}{\varepsilon}\cdot \|\bar{\bar S}(x)\|_{L^1(\Delta)}\le c_2\gamma|\Delta| \le \|\bar S(x)\|_{L^1(\Delta)},\quad   \|\bar S(x)\|_{L^\infty(\Delta)}\le c_3 \gamma\cdot 2^{\nu_0},
\end{equation}
where
\begin{equation}
	\gamma=\sum_{k=m+1}^Ma_k2^{kl/2-\nu_0}.
\end{equation}
Now consider the sets
		\begin{equation}
			E'_\Delta=\{x\in \Delta:\,\bar S(x)>c_2\gamma/2 \},\quad  E''_\Delta=\{x\in \Delta:\,\bar {\bar S}(x)<c_2\gamma/16 \}.
		\end{equation}
	To estimate the measure of the first set, observe that from an inequality in \e{y60} it follows the inequality
	\begin{equation}
		|E'_\Delta|\cdot c_3\gamma\cdot 2^{\nu_0} +(|\Delta|-|E'_\Delta|)\cdot c_2\gamma/2\ge \|\bar S(x)\|_{L^1(\Delta)}\ge  c_2\gamma|\Delta|.
	\end{equation}
Then solving this inequality yields 
\begin{equation}
		|E'_\Delta|>\frac{c_2}{2c_3\cdot 2^{\nu_0} -c_2}\cdot |\Delta|>\frac{c_2}{c_3\cdot 2^{\nu_0}}\cdot |\Delta|.
\end{equation}
For the second set, by Chebishev's inequality we have 
\begin{equation}
	|E''_\Delta|>|\Delta|-\frac{\|\bar {\bar S}\|_{L_1(\Delta)}}{c_2\gamma/16}\ge |\Delta|(1-16\varepsilon/c_1).
\end{equation}
Since $\varepsilon $ can be arbitrarily small, thus we obtain
\begin{align*}
		|\{x\in \Delta:\, \bar S(x)>8\bar{\bar S}(x)\}|&\ge |E'_\Delta\cap E''_\Delta|\ge  |E'_\Delta|+|E''_\Delta|-|\Delta|\\
		&\ge |\Delta|\left(\frac{c_2}{c_3\cdot 2^{\nu_0}}-16\varepsilon/c_1\right)>\frac{c_2}{2c_3\cdot 2^{\nu_0}}\cdot |\Delta|.
\end{align*}
This completes the proof of lemma.
	\end{proof}
	
	\section{Proofs of Theorems \ref{T1} and \ref{T4}}
	
	\begin{lemma}\label{L9}
		Let $\xi$ be the function in \e{u51} and denote 
	\begin{equation}
			\xi_{m,j}(x)=2^{m/2}\xi(2^mx-j),\quad n,j\in \ZZ.
	\end{equation}
	If an increasing sequence $w(n)$ satisfies condition \e{y62}, then for the coefficients satisfying \e{y65} it follows that
		\begin{equation}\label{y82}
			\sum_{n,j\in \ZZ}|a_{n,j}|\xi_{n,j}(x)<\infty\text{ a.e. on }\ZR.
		\end{equation}
	\end{lemma}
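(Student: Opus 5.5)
The plan is to prove the stronger local statement: for every bounded interval $I\subset\ZR$,
\begin{equation*}
\int_I \sum_{n,j\in\ZZ}|a_{n,j}|\,\xi_{n,j}(x)\,dx<\infty .
\end{equation*}
This immediately gives finiteness a.e. on $I$, and since $\ZR$ is a countable union of such intervals, \e{y82} follows. The sum will be split into the levels $n\ge 0$ and $n<0$.

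\textbf{Levels $n\ge 0$.} Integrate term by term over all of $\ZR$. By the change of variables in the definition of $\xi_{n,j}$,
\begin{equation*}
\int_\ZR \xi_{n,j}=2^{-n/2}\|\xi\|_1\lesssim 2^{-n/2},
\end{equation*}
so it suffices to bound $\sum_{n\ge0}\sum_j|a_{n,j}|2^{-n/2}$. This sum is not controlled by \e{y65} alone, because there are infinitely many $j$ at each level. For that reason I would integrate only over $I$ and use the decay of $\xi$.

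For $x\in I$ and $n\ge0$, a single application of Cauchy--Schwarz in $j$ gives
\begin{equation*}
\sum_j|a_{n,j}|\xi_{n,j}(x)\le 2^{n/2}\Big(\sum_j a_{n,j}^2\Big)^{1/2}\Big(\sum_j\xi(2^nx-j)^2\Big)^{1/2}\lesssim 2^{n/2}A_n,
\qquad A_n=\Big(\sum_j a_{n,j}^2\Big)^{1/2}.
\end{equation*}
This pointwise bound loses the factor $2^{n/2}$, so it is the wrong tool here. Instead I would apply Cauchy--Schwarz in $(n,j)$ jointly, weighting by $w(n)$, after first squaring the integrand.

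Integrating over $I$ and applying Cauchy--Schwarz with weights $w(n)$,
\begin{equation*}
\int_I\sum_{n,j}|a_{n,j}|\xi_{n,j}
\le\Big(\sum_{n,j}a_{n,j}^2w(n)\Big)^{1/2}\Big(\sum_{n,j}w(n)^{-1}\Big(\int_I\xi_{n,j}\Big)^2\Big)^{1/2}.
\end{equation*}
The first factor is finite by \e{y65}, so everything reduces to the second factor. I would estimate
\begin{equation*}
\Big(\int_I\xi_{n,j}\Big)^2\le \Big(\int_I\xi_{n,j}\Big)\cdot\sup_j\int_\ZR\xi_{n,j},
\qquad\text{so that}\qquad
\sum_j\Big(\int_I\xi_{n,j}\Big)^2\le \Big(\int_I\sum_j\xi_{n,j}\Big)\,\sup_j\|\xi_{n,j}\|_1 .
\end{equation*}

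\textbf{Bounding $\int_I\sum_j\xi_{n,j}$.} Since $\sum_j\xi(y-j)\lesssim 1$ for all $y$, we have $\sum_j\xi_{n,j}(x)\lesssim 2^{n/2}$ pointwise, and therefore
\begin{equation*}
\int_I\sum_j\xi_{n,j}\lesssim 2^{n/2}|I|
\end{equation*}
for every $n$. Combined with $\sup_j\|\xi_{n,j}\|_1\lesssim 2^{-n/2}$, this gives, for every $n$,
\begin{equation*}
\sum_j\Big(\int_I\xi_{n,j}\Big)^2\lesssim |I|.
\end{equation*}
Hence the second factor is at most $\big(|I|\sum_n w(n)^{-1}\big)^{1/2}$, which is finite by \e{y62}. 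This argument works for all $n\in\ZZ$ simultaneously, so the split into $n\ge0$ and $n<0$ is in fact unnecessary.

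\textbf{Where care is needed.} The main point to check is that the bound $\sum_j\xi(y-j)\lesssim1$ is uniform in $y$; this holds because $\beta>0$ makes $\xi$ integrable with a summable tail. The other point is the choice of where to apply Cauchy--Schwarz: pairing $\int_I\xi_{n,j}$ with $\|\xi_{n,j}\|_1$ makes the factors $2^{\pm n/2}$ cancel exactly. With $I$ fixed, the estimate yields a finite integral, hence a.e. finiteness on $I$, and exhausting $\ZR$ by the intervals $[-R,R]$ completes the proof.
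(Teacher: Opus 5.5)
Your argument is correct and is essentially the paper's proof. Both reduce to the uniform bound $\sum_j\bigl(\int_I\xi_{n,j}\bigr)^2\lesssim|I|$, obtained from $\bigl(\int_I\xi_{n,j}\bigr)^2\le\|\xi_{n,j}\|_1\int_I\xi_{n,j}$, followed by Cauchy--Schwarz weighted by $w(n)$; the paper does this in two stages (first in $j$, then in $n$) while you do it jointly, and your pointwise bound $\sum_j\xi(y-j)\lesssim 1$ covers all levels $n\in\ZZ$ at once, slightly more cleanly than the paper's reduction to $a=2^m$ with $m>-n$, and the abandoned detours in your ``Levels $n\ge0$'' paragraph can simply be deleted.
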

	\begin{proof}
		First, observe that for any number $a>0$,
			\begin{equation}\label{y44}
				\sum_{j\in \ZZ}\left(\int_{-a}^{a}\xi_{n,j}(t)dt\right)^2\lesssim a,\quad n\in \ZZ.
			\end{equation}
		Indeed, without loss of generality we can assume that $a= 2^{m}$, where $m>-n$ is an integer. Using \e{y61}, we obtain
			\begin{align*}
				\sum_{j\in \ZZ}\left(\int_{-2^{m}}^{2^{m}}\xi_{n,j}(t)dt\right)^2&=2^n \sum_{j\in \ZZ}\left(\int_{-2^{m}}^{2^{m}}\xi(2^nt-j)dt\right)^2\\
				&=2^{-n}\sum_{j\in \ZZ}\left(\int_{-2^{n+m}}^{2^{n+m}}\xi(t-j)dt\right)^2\\
				&\le2^{-n}\|\xi\|_{L^1(\ZR)}\sum_{j\in \ZZ}\int_{-2^{n+m}}^{2^{n+m}}\xi(t-j)dt\lesssim 2^{m}=a.
			\end{align*}
Thus \e{y44} follows. Now let $w(n)$ satisfy condition \e{y62} and we have \e{y65}. Applying \e{y44}, we get
		\begin{align}
			\sum_{n,j\in \ZZ}\int_{-a}^a&|a_{n,j}\xi_{n,j}(x)|dx\\
			&\le \sum_{n\in \ZZ}\left(\sum_{j\in \ZZ} |a_{n,j}|^2\right)^{1/2}\left(\sum_{j\in \ZZ}\left(\int_{-a}^a\xi_{n,j}(x)dx\right)^2\right)^{1/2}\\
			&\lesssim \sqrt{a}\sum_{n=0}^\infty\left(\sum_{j=1}^{2^k} |a_{n,j}|^2\right)^{1/2}\\
			&\le \sqrt{a}\left(\sum_{n,j\in \ZZ}a_{n,j}^2w(n)\right)^{1/2}\cdot \left(\sum_{n=0}^\infty \frac{1}{w(n)}\right)^{1/2}<\infty.
		\end{align}
		Since the latter holds for any $a>0$, we obtain \e{y82} almost everywhere.
	\end{proof}
	\begin{proof}[Proof of \trm{T1}] The proof immediately follows from \lem{L9}.
	\end{proof}
	\begin{lemma}\label{L13}
		Let $\{\ZF_k\}_{k\ge 1}$ be a sequence of finite partitions of $[0,1)$ such that 
		\begin{equation*}
			\lim_{k\to\infty}\max_{J\in \ZF_k}|J|=0
		\end{equation*}
		and suppose that $E_k\subset [0,1)$ is a sequence of measurable sets such that 
		\begin{equation}\label{y70}
			|E_k\cap F|>c|F|
		\end{equation}
		for every set $F\in \ZF_k$. If positive numbers $a_k>0$ satisfy $\sum_{k= 1}^\infty a_k=\infty$, then
		\begin{equation}\label{y69}
			\sum_{k=1}^\infty a_k\ZI_{E_k}(x)=\infty\text{ a.e..}
		\end{equation}
	\end{lemma}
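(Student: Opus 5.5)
The plan is to argue by contradiction, using the uniform density of the $E_k$ in the cells of $\ZF_k$ against a set on which the series is bounded. Suppose \e{y69} fails. Then the set where $\sum_k a_k\ZI_{E_k}(x)<\infty$ has positive measure. Since it is the increasing union of the sets
\begin{equation*}
B_K=\Big\{x\in[0,1):\,\sum_k a_k\ZI_{E_k}(x)\le K\Big\},\quad K=1,2,\ldots,
\end{equation*}
some $B=B_K$ has $|B|>0$. Integrating over $B$ and using monotone convergence gives
\begin{equation*}
\sum_{k=1}^\infty a_k|E_k\cap B|\le K|B|<\infty.
\end{equation*}
It therefore suffices to show that $|E_k\cap B|\ge \frac{c}{4}|B|$ for all large $k$. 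That bound contradicts $\sum_k a_k=\infty$.

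\emph{Step 1: $B$ is almost a union of cells of $\ZF_k$.} I assume, as throughout the paper, that the elements of $\ZF_k$ are intervals. Let $P_k$ be the averaging operator on $\ZF_k$,
\begin{equation*}
P_kf(x)=\frac{1}{|J|}\int_J f \quad\text{for } x\in J\in\ZF_k .
\end{equation*}
It is a contraction on $L^1[0,1)$. For continuous $g$, $P_kg\to g$ uniformly, because the mesh of $\ZF_k$ tends to zero and $g$ is uniformly continuous. Approximating $\ZI_B$ in $L^1$ by a continuous $g$ and using
\begin{equation*}
\|P_k\ZI_B-\ZI_B\|_1\le 2\|\ZI_B-g\|_1+\|P_kg-g\|_1,
\end{equation*}
we get $\|P_k\ZI_B-\ZI_B\|_1\to0$. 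Now let $U_k$ be the union of the cells $J\in\ZF_k$ with $|J\cap B|>(1-\frac c2)|J|$. On $B\setminus U_k$ we have $|P_k\ZI_B-\ZI_B|\ge c/2$, so Chebyshev's inequality gives
\begin{equation*}
|B\setminus U_k|\le \frac{2}{c}\,\|P_k\ZI_B-\ZI_B\|_1\to 0 .
\end{equation*}
Hence $|U_k|\ge |B|/2$ for all $k\ge k_0$.

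\emph{Step 2: lower bound for $|E_k\cap B|$.} For each cell $J\subset U_k$, hypothesis \e{y70} gives
\begin{equation*}
|E_k\cap B\cap J|\ge |E_k\cap J|-|J\setminus B|> c|J|-\tfrac c2|J|=\tfrac c2|J|.
\end{equation*}
Summing over these cells yields $|E_k\cap B|\ge \frac c2|U_k|\ge \frac c4|B|$ for $k\ge k_0$.

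\emph{Step 3: conclusion.} Combining this with the bound from the start,
\begin{equation*}
K|B|\ge \frac c4|B|\sum_{k\ge k_0}a_k=\infty,
\end{equation*}
which is a contradiction, so \e{y69} holds almost everywhere. The only delicate point is Step 1. The partitions are not nested, so the martingale or Lebesgue differentiation theorem does not apply directly. The approximation argument above handles this using only that the mesh tends to zero and that the cells are intervals.
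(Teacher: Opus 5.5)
Your proof is correct, but it follows a different route from the paper.

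\emph{The paper's route.} It argues directly. For a fixed cell $G\in\ZF_m$ it sets $S_{m,n}=\sum_{k=m}^n a_k\ZI_{E_k}$ and $A_{m,n}=\sum_{k=m}^n a_k$, and uses $\|S_{m,n}\|_\infty\le A_{m,n}$ together with $\|S_{m,n}\|_{L^1(G)}\ge cA_{m,n}|G|$. A reverse-Chebyshev argument then gives a portion of $G$, of relative measure $c_2$, on which $S_{m,n}>c_1A_{m,n}$. Hence the divergence set has relative density at least $c_2$ in every cell of every $\ZF_n$, and the Lebesgue density theorem gives full measure.

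\emph{Your route.} You argue by contradiction on a set $B$ where the series is bounded by $K$. Integrating gives $\sum_k a_k|E_k\cap B|\le K|B|$. You then get $|E_k\cap B|\ge\frac{c}{4}|B|$ for large $k$ from $\|P_k\ZI_B-\ZI_B\|_1\to0$.

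\emph{What each buys.} In your version each $E_k$ is only ever tested against cells of its own partition $\ZF_k$, and no reverse-Chebyshev step is needed. The paper's bound on $\|S_{m,n}\|_{L^1(G)}$, with $G\in\ZF_m$ and $k>m$, tacitly treats $G$ as a union of cells of $\ZF_k$. That is exact only when $\ZF_k$ refines $\ZF_m$, which the lemma does not assume. Without nesting, the paper needs a boundary-cell estimate and must discard finitely many $k$; this is harmless but unstated, and your argument avoids it. In return, the paper's route gives quantitative local information: on a fixed proportion of each cell, the partial sums are comparable to $A_{m,n}$.

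One small correction to your closing remark. Lebesgue differentiation does apply to non-nested partitions into intervals, because a cell $J\ni x$ lies in $[x-|J|,x+|J|]$. So density points of $B$ would also give your Step 1.

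In both routes the interval hypothesis is essential, as you noted. For example, split every cell evenly between $[0,\frac12)$ and $[\frac12,1)$ and take $E_k=[0,\frac12)$. The hypotheses then hold with any $c<\frac12$, yet the series vanishes on $[\frac12,1)$.
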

	\begin{proof}
		Denote by $E$ the set in $[0,1)$, where the divergence in \e{y69} holds. Chose $m$ and an arbitrary set $G\in \ZF_m$. Then for $n>m$ denote 
		\begin{equation}
			S_{m,n}=\sum_{k=m}^n a_k\ZI_{E_k}(x)
		\end{equation}
		We have
		\begin{equation}
			\|S_{m,n}\|_\infty\le A_{m,n}=\sum_{k=m}^n a_k,	\quad \|S_{m,n}\|_{L^1(G)}\ge cA_{m,n}|G|,
		\end{equation}
		where the second bound follows from \e{y70}. Thus, a standard argument implies 
		\begin{equation}\label{y71}
			|\{x\in G:\, S_{m,n}(x)>c_1A_{m,n}\}|>c_2|G|,
		\end{equation}
		where $c_1,c_2$ are positive constants depending on $c$. Since $\lim_{n\to\infty}A_{m,n}=\infty$, condition \e{y71} yields 
		\begin{equation*}
			\lim_{n\to\infty}S_{m,n}(x)=\infty
		\end{equation*}
		on a subset of $G$ with a measure greater that $c_2|G|$. Therefore we obtain that $|E\cap G|>c_2|G|$ for every elements $G$ belonging to a partition $\ZF_n$. Thus the Lebesgue density property of $E$ implies $|E|=1$ that completes the proof of the lemma.
	\end{proof}
	\begin{remark}\label{R1}
		Observe that, under the assumptions of  \lem{L13} we also have $|\liminf_{n\to\infty }E_n|=1$, which follows by taking $a_k=1$ in the lemma.
	\end{remark}
	\begin{proof}[Proof of \trm{T4}]
	We consider an increasing sequence $w(n)$ satisfying \e{y83}. Denote $\bar w(k)=w(kl-\nu_0)$. Clearly, \e{y83} yields
	\begin{equation}
	\sum_{k=1}^\infty \bar w(k)^{-1}=\infty.
	\end{equation}
	One can find a sequence $q_k\nearrow \infty$ such that
	\begin{equation}\label{y66}
		\sum_{k=1}^\infty\frac{1}{\bar w(k)q_k}=\infty,\quad \sum_{k=1}^\infty\frac{1}{\bar w(k)q_k^2}<\infty.
	\end{equation}
	Then we consider series 
	\begin{equation}\label{y67}
		\sum_{k=1}^\infty\sum_{j=1}^{2^{kl-\nu_0}}\frac{1}{2^{kl/2}\bar w(k)q_k}\Psi_{k,j}(x)
	\end{equation}
where $\Psi_{k,j}$ is the subsystem \e{y64} of our wavelet-type system. Note that the coefficients of this series satisfy \e {y65}, since by the second relation \e{y66} we have
\begin{equation}
	\sum_{k=1}^\infty\sum_{j=1}^{2^{kl-\nu_0}}\left(\frac{1}{2^{kl/2}\bar w(k)q_k}\right)^2\bar w(k)=2^{-\nu_0}\sum_{k=1}^\infty\frac{1}{\bar w(k)q_k^2}<\infty.
\end{equation}
Thus it remains to prove that series \e{y67} is almost everywhere divergent after some rearrangement of its terms. Let $\bar \Psi_{k,j}$ be the upper $\lambda$-truncation of $\Psi_{k,j}$ and suppose that $U_{k,j}$ is the corresponding supporting interval in \e{y56}. Denote
	\begin{align}
		&\bar U_{k,j}=[0,1)\cap U_{k,j},\\
		&E_k=\left\{x\in [0,1]:\,\sum_{j=1}^{2^{kl-\nu_0}}|\bar \Psi_{k,j}(x)|>\lambda2^{kl/2}\right\}.
	\end{align}
	Observe that, by definition, some intervals $U_{k,j}$ may not be contained in $[0,1)$. In fact, this occurs only for the edge intervals $U_{k,1}$ and $U_{k,2^{kl-\nu_0}}$. Nevertheless,  it follows from \e{y81} that 
	\begin{equation}
		|E_k\cap \bar U_{k,j}|=|\{\Psi_{k,j}(x)|>\lambda2^{kl/2}\}|\ge c|\bar U_{k,j}|\text{ for all }j=1,2,\ldots,2^{kl-\nu_0}
	\end{equation}
	Thus the sequence of partitions $\ZF_k=\{\bar U_{k,j}:\, j\in \ZZ\}$ of $[0,1)$ satisfies the hypothesis of \lem{L13}. Then, taking into account \e{y66}, we conclude that 
	\begin{equation}
		\sum_{k=1}^\infty\frac{\ZI_{E_k}(x)}{\bar w(k)q_k}=\infty\text{ a.e. on }[0,1).
	\end{equation}
	By the definition of the sets $E_k$, the latter clearly implies
	\begin{equation}
		\sum_{k=1}^\infty\sum_{j=1}^{2^{kl-\nu_0}}\frac{1}{2^{kl/2}\bar w(k)q_k}|\bar \Psi_{k,j}(x)|=\infty \text{ a.e. on }[0,1).
	\end{equation}
	Thus one can find an increasing sequence of integers $m_k$ such that the sets
	\begin{equation}
		F'_k=\left\{\sum_{j=m_k+1}^{m_{k+1}}\sum_{j=1}^{2^{kl-\nu_0}}\frac{1}{2^{kl/2}\bar w(k)q_k}|\bar\Psi_{k,j}(x)|>k\right\}
	\end{equation}
	satisfy
	\begin{equation}\label{y84}
		|F'_k\cap \Delta|>(1-\varepsilon)|\Delta|\text{ for every set }\Delta\in \ZF_{m_k},
	\end{equation}
	where $\varepsilon>0$ can be arbitrary small. Consider the collection of functions
	\begin{equation}
		\frac{1}{2^{kl/2}\bar w(k)q_k}\bar \Psi_{k,j}(x)\text{, where }m_s\le k<m_{s+1},\, j=1,2,\ldots 2^{kl-\nu_0},
	\end{equation}
	which number is equal to $n_s=(m_{s+1}-m_s)2^{kl-\nu_0}$. According to \lem{L15} this collection form a tree system. Then applying \lem{L14}, we find a numeration of this collection,
	\begin{equation}
		\bar \psi_{s,j}(x),\quad j=1,2,\ldots,n_s,
	\end{equation} 
	such that
	\begin{align}
		\max_{1\le m\le n_s}\left|\sum_{j=1}^{m}	\bar \psi_{s,j}(x)\right|&\ge \frac{1}{2}\max_{1\le p<q\le n_s}\left|\sum_{j=p}^{q}	\bar \psi_{s,j}(x)\right|\\
		&\ge \frac{1}{4}\sum_{j=1}^{n_s}	|\bar \psi_{s,j}(x)|, \quad x\in [0,1].
	\end{align}
	Set
	\begin{equation}
		F''_k=\left\{x:\,\sum_{j=1}^{n_s}	|\bar \psi_{s,j}(x)|> 8\sum_{j=1}^{n_s}	|\bar {\bar \psi}_{s,j}(x)|\right\}
	\end{equation}
	By  \lem{L12} we have
	\begin{equation}\label{y85}
		|F''_k\cap \Delta|>c|\Delta|\text{ for every set }\Delta\in \ZF_{m_k},
	\end{equation}
	since the intervals of $\ZF_{m_k}$ have length greater that $2^{-m_kl}$. Thus for every point $x\in F_k=F'_k\cap F''_k$ we have 
	\begin{align}
		\max_{1\le m\le n_s}\left|\sum_{j=1}^{m}	\psi_{s,j}(x)\right|&\ge \max_{1\le m\le n_s}\left|\sum_{j=1}^{m}	\bar \psi_{s,j}(x)\right|- \sum_{j=1}^{n_s}	|\bar {\bar \psi}_{s,j}(x)|\\
		&\ge  \frac{1}{4}\sum_{j=1}^{n_s}	|\bar \psi_{s,j}(x)|- \sum_{j=1}^{n_s}	|\bar {\bar \psi}_{s,j}(x)|\\
		&\ge  \frac{1}{8}\sum_{j=1}^{n_s}	|\bar \psi_{s,j}(x)|>\frac{s}{8}\text{ if }x\in F_k=F'_k\cap F''_k.
	\end{align}
	On the other hand, from \e{y84} and \e{y85}, we have 
\begin{equation}\label{y86}
	|F_k\cap \Delta|>c|\Delta|\text{ for every }\Delta\in \ZF_{m_k}
\end{equation}
with a constant $c>0$ and we have
	\begin{equation}
		\max_{1\le m\le n_s}\left|\sum_{j=1}^{m}	\psi_{s,j}(x)\right|\ge\frac{s}{8} \text{ as } x\in F_k.
	\end{equation}
Thus we obtain
	\begin{equation}
		\lim_{s\to \infty}\max_{1\le m\le n_s}\left|\sum_{j=1}^{m}	\psi_{s,j}(x)\right|=\infty \text{ as }x\in E=\liminf_{k\to\infty } F_k.
	\end{equation}
	On the other hand by \lem{L13} (see also \rem{R1}) it follows that $|E|=1$. Thus the series 
	\begin{equation}
		\sum_{s=1}^\infty \sum_{j=1}^{n_s}\psi_{s,j}(x)
	\end{equation}
	diverges almost everywhere. Since this series is a rearrangement of the terms of \e{y67}, the proof of the theorem is complete.
	\end{proof}
	
	\section{Proofs of Corollaries \ref{C1} and \ref{C2}}
\begin{proof}[Proofs of \cor{C1}]. The proofs of the convergence and divergence parts of \cor{C1} immediately follows from Theorems \ref{T1} and \ref{T4} respectively.
\end{proof}
\begin{proof}[Proofs of \cor{C2}] 
	\textit{Convergence part: }Having \trm{T3}, the proof of the first part of  \cor{C2} is based on a simple standard argument (see for example \cite{KaSt}). Hence, let $\{\phi_n=\phi_{m_n,l_n}\}$ be an arbitrary sequence, consisting of distinct functions from our wavelet-type system and suppose that 
	\begin{equation}\label{y42}
		\sum_{n=1}^\infty a_n^2\log n<\infty.
	\end{equation}
	We need to prove almost everywhere convergence of the series 
	\begin{equation}\label{y40}
		\sum_{n=1}^\infty a_n\phi_n(x).
	\end{equation}
	Denote
	\begin{equation}
		\delta_k(x)=\max_{2^k< m\le2^{k+1}}\left|\sum_{j=2^k}^ma_j\phi_j(x)\right|,\quad k=0,1,\ldots.
	\end{equation}
	Applying \trm{T3} with $\Phi(x)=\xi(x)$, we get
	\begin{align}
		\left\|\delta_k\right\|_2&\le \left\|\sum_{j=2^k+1}^{2^{k+1}}|a_j\phi_j(x)| \right\|_2\lesssim \left\|\sum_{j=2^k+1}^{2^{k+1}}|a_j|\Phi_{m_n,l_n}\right\|_2^2\lesssim k\sum_{j=2^k+1}^{2^{k+1}}a_j^2.
	\end{align}
Therefore,
\begin{align*}
	\sum_{k=1}^\infty \int_\ZR\delta_k^2(t)dt\lesssim \sum_{k=1}^\infty k\sum_{j=2^k+1}^{2^{k+1}}a_j^2\lesssim\sum_{n=1}^\infty a_n^2\log n<\infty.
\end{align*}
This implies that
\begin{equation}\label{y41}
	\lim_{k\to\infty}\max_{2^k\le m<2^{k+1}}\left|\sum_{j=2^k}^ma_j\phi_j(x)\right|=0\text{ almost everywhere.}
\end{equation}
Let $S_n(x)$ be the $n$th partial sum of series \e{y40} and suppose that it converges in $L^2$-norm to a function $f\in L^2(\ZR)$. Then, using \e{y42}, we obtain
\begin{equation}
	\sum_{k=1}^\infty\int_\ZR\left|f(t)-S_{2^k}(t)\right|^2dt=\sum_{k=1}^\infty \sum_{j=2^{k+1}+1}^\infty a_j^2\lesssim \sum_{j=1}^\infty a_j^2\log j<\infty.
\end{equation}
This implies almost everywhere convergence of the sequence $S_{2^k}(x)$. Combining this and \e{y41}, we obtain almost everywhere convergence of series \e{y40}.

\textit{Divergence part:} 
The argument demonstrated in the proof of the second part of \trm{T1} can be similarly applied in this part of the theorem. Nevertheless,  we will proceed, using the result of \trm{Pol}. Suppose to the contrary that a nondecreasing sequence $w(n)$ is an RC-multiplier for a system satisfying the requirements of the theorem, while we have $w(n)=o(\log n)$ as $n\to \infty$. Then we can write
\begin{equation}
	w(2^k)=\varepsilon(k)k,\text{ where }\lim_{k\to\infty}\varepsilon(k)=0.
\end{equation}
Thus, one can find a nondecreasing function $R(x)$, $x\in [0,\infty)$ such that
\begin{align}
	&\sum_{k=1}^\infty R(k)^{-1}<\infty,\\
	&\sum_{k=1}^\infty R(w(2^k))^{-1}= \sum_{k=1}^\infty R(\varepsilon(k)\cdot  k)^{-1}=\infty.\label{y87}
\end{align}
Since $w(n)$ is an RC-multiplier for our system, using \trm{Pol} we can say the $u(n)=R(w(n))$ is a UC-multiplier for it. On the other hand from \e{y87} it follows that
\begin{equation}
	\sum_{k=1}^\infty u(2^k)^{-1}=\sum_{k=1}^\infty R(w(2^k))^{-1}=\infty.
\end{equation}
Thus using \trm{T1} we obtain that $u(n)$ is not a UC-multiplier for $\Phi$, which is a contradiction.
\end{proof}

	\bibliographystyle{plain}
	

\end{document}